\documentclass[10pt]{amsart}

\usepackage[text={420pt,660pt},centering]{geometry}
\usepackage[T1]{fontenc}
\usepackage{lmodern}
\usepackage{microtype}
\usepackage{mathtools}
\usepackage{amssymb}
\usepackage{bm}
\usepackage{enumitem}
\usepackage{xcolor}
\usepackage[colorlinks=true,pdfstartview=FitV,linkcolor=blue,citecolor=blue,urlcolor=blue,pagebackref=false]{hyperref}
\usepackage{aliascnt}

\hypersetup{
pdftitle={FRSB in the SK spin glass: convergence to full-interval support at zero temperature},
pdfauthor={Hong-Bin Chen},
pdfsubject={Full replica symmetry breaking in the Sherrington--Kirkpatrick model},
pdfkeywords={Sherrington--Kirkpatrick model, Parisi measure, full replica symmetry breaking, zero temperature, endpoint atom}
}

\allowdisplaybreaks
\newtheorem{theorem}{Theorem}[section]

\newaliascnt{proposition}{theorem}
\newtheorem{proposition}[proposition]{Proposition}
\aliascntresetthe{proposition}

\newaliascnt{lemma}{theorem}
\newtheorem{lemma}[lemma]{Lemma}
\aliascntresetthe{lemma}

\newaliascnt{corollary}{theorem}

\aliascntresetthe{corollary}

\newaliascnt{claim}{theorem}

\aliascntresetthe{claim}

\theoremstyle{definition}
\newaliascnt{definition}{theorem}

\aliascntresetthe{definition}

\theoremstyle{remark}
\newaliascnt{remark}{theorem}
\newtheorem{remark}[remark]{Remark}
\aliascntresetthe{remark}

\numberwithin{equation}{section}

\newcommand{\R}{\mathbb R}
\newcommand{\E}{\mathbb E}
\newcommand{\PP}{\mathbb P}

\newcommand{\1}{\mathbf 1}
\newcommand{\supp}{\operatorname{supp}}
\newcommand{\Var}{\operatorname{Var}}

\newcommand{\sech}{\operatorname{sech}}
\newcommand{\cU}{\mathcal U}
\newcommand{\cP}{\mathcal P}
\newcommand{\cI}{\mathcal I}
\newcommand{\cG}{\mathcal G}
\newcommand{\dd}{\,\mathrm d}
\newcommand{\stackref}[2]{\stackrel{#2}{#1}}
\DeclareRobustCommand{\authorcontactmark}{\hyperlink{author-contact}{\raisebox{0.7ex}{\scriptsize\ensuremath{\dagger}}}}

\title[FRSB in the SK spin glass]{FRSB in the SK spin glass: convergence to full-interval support at zero temperature}
\author[Hong-Bin Chen]{Hong-Bin Chen\authorcontactmark}
\date{}

\begin{document}

\begin{abstract}
We prove full replica symmetry breaking for the zero-field Sherrington--Kirkpatrick model at zero temperature: the Parisi minimizer is absolutely continuous, has a smooth density, and has support $[0,1)$. At inverse temperature $\beta>1$, \cite{Lopatto2026} recently proved that the Parisi measure has support $[0,q_\beta]$. Here, we show $q_\beta$ converges to $1$ as $\beta\to\infty$.
\end{abstract}

\renewcommand{\thefootnote}{\fnsymbol{footnote}}
\maketitle
\footnotetext[2]{\hypertarget{author-contact}{}Email: \href{mailto:hbc236@nyu.edu}{\nolinkurl{hbc236@nyu.edu}}; affiliation: New York University Shanghai, Shanghai, China.}
\renewcommand{\thefootnote}{\arabic{footnote}}
\setcounter{footnote}{0}

\noindent\textbf{Authorship and AI-use disclosure.} This manuscript is a successor to \cite{AIxivPreprint}, which was written entirely by ChatGPT~5.6 from prompts supplied by Hong-Bin Chen and another person who prefers to remain anonymous. The proof arguments and original prose of the present manuscript were generated by the same model from prompts supplied by Hong-Bin Chen. The substantive division of labor may therefore be summarized as follows.
\begin{center}
\textit{ChatGPT~5.6 is the author of the manuscript.}
\end{center}
Since \href{https://info.arxiv.org/help/moderation/index.html\#policy-for-authors-use-of-generative-ai-language-tools}{arXiv's policy on generative AI language tools} does not permit such a tool to be listed as an author, Hong-Bin Chen is only \textit{formally} listed as the author for submission purposes and assumes full responsibility for the submitted text. This formal attribution reflects arXiv's policy rather than the division of labor in producing the manuscript: Hong-Bin Chen's role was limited to prompting, editing, proofreading, and verifying the arguments; in particular, he did not construct the proof arguments. He has read and verified the proofs, although errors or oversights may remain.

\begingroup
\renewcommand{\thefootnote}{\fnsymbol{footnote}}
\setcounter{footnote}{2}
A conditional Lean~4 formalization\footnote{Also written by ChatGPT~5.6, it treats seven explicitly identified analytic inputs as assumptions. They encapsulate Parisi variational theory, PDE and stochastic analysis, convergence arguments, and the fact from \cite{ChenHandschyLerman} that $0$ lies in the support. This is not an assumption-free verification of the entire paper; the repository documents the precise trust boundary and appendix coverage.} machine-checks Theorems~\ref{thm:finite-temperature} and~\ref{thm:zero-temperature} and is available at
\begin{center}
\url{https://github.com/hbchen-math/FRSB_zero_temp_SK}.
\end{center}
\setcounter{footnote}{0}
\endgroup

\enlargethispage{8pt}
\tableofcontents

\section{Introduction}
\label{sec:introduction}

The Sherrington--Kirkpatrick model was introduced in \cite{SherringtonKirkpatrick1975} as a mean-field model of a spin glass. Parisi subsequently proposed the hierarchical order parameter and the variational formula that underlie replica symmetry breaking; see \cite{Parisi1979}. Guerra established the replica-symmetry-breaking interpolation bound in \cite[Theorem 3]{Guerra2003}, and Talagrand proved the matching Parisi formula for the SK and mixed even $p$-spin models in \cite[Theorem 1.1]{Talagrand2006}. Panchenko later extended the formula to general mixed $p$-spin models in \cite[Theorem 1]{Panchenko2014}. The present paper concerns the structure of the unique variational minimizer rather than the value of the variational formula itself.

For completeness, let $\Sigma_N:=\{-1,1\}^N$, let $(g_{ij})_{1\leq i<j\leq N}$ be independent standard Gaussian random variables, and define the zero-field SK Hamiltonian by
\begin{equation*}
 H_N(\sigma):=\frac1{\sqrt N}\sum_{1\leq i<j\leq N}g_{ij}\sigma_i\sigma_j,
 \qquad \forall\sigma\in\Sigma_N.
\end{equation*}
For $\beta>0$, define
\begin{equation*}
 F_N(\beta):=\frac1N\E\log\sum_{\sigma\in\Sigma_N}e^{\beta H_N(\sigma)}
 \quad\text{and}\quad
 G_N:=\frac1N\E\max_{\sigma\in\Sigma_N}H_N(\sigma).
\end{equation*}
Thus $F_N(\beta)$ is the normalized quenched free energy at inverse temperature $\beta$, and $G_N$ is the normalized ground-state energy.

For a Borel probability measure $\mu$ on $[0,1]$, set
\begin{equation*}
\alpha_\mu(s):=\mu([0,s]),
\qquad
0\leq s\leq1.
\end{equation*}
In the normalization used throughout the positive-temperature part, the Parisi PDE is
\begin{equation*}
\partial_su_\mu(s,x)
=-\frac{\beta^2}{2}\left(u_{\mu,xx}(s,x)+\alpha_\mu(s)u_{\mu,x}(s,x)^2\right)
\quad\text{and}\quad
u_\mu(1,x)=\log\cosh x,
\end{equation*}
and the Parisi functional is
\begin{equation}
\mathcal P_\beta(\mu)
:=\log2+u_\mu(0,0)-\frac{\beta^2}{2}\int_0^1s\alpha_\mu(s)\dd s.
\label{eq:intro-finite-functional}
\end{equation}
The positive-temperature Parisi formula states that
\begin{equation*}
 \lim_{N\to\infty}F_N(\beta)
 =\inf_{\mu\in\operatorname{Prob}([0,1])}\mathcal P_\beta(\mu);
\end{equation*}
see \cite[Theorem~1.1]{Talagrand2006}. By \cite[Theorem 1]{AuffingerChenUnique}, the functional in \eqref{eq:intro-finite-functional} has a unique minimizer $\mu_\beta$.

We first record the complete positive-temperature structure theorem from \cite[Theorem~1.1]{Lopatto2026}. Here and below, $f\in C^\infty([0,a))$ means that $f$ is smooth on every compact interval $[0,T]\subset[0,a)$, with derivatives at zero understood from the right.

\begin{theorem}[Positive-temperature structure \cite{Lopatto2026}]
\label{thm:finite-temperature-structure}
For every $\beta>1$, there exist $q_\beta\in(0,1)$, $c_\beta\in(0,1)$, and a nonnegative function $\rho_\beta\in C^\infty([0,q_\beta))$ such that
\begin{equation}
 \mu_\beta(\mathrm d s)=\rho_\beta(s)\dd s+c_\beta\delta_{q_\beta}(\mathrm d s).
 \label{eq:intro-finite-decomposition}
\end{equation}
Moreover,
\begin{equation*}
 \supp\mu_\beta=[0,q_\beta]
 \quad\text{and}\quad
 \overline{\supp_{[0,q_\beta)}(\rho_\beta(s)\dd s)}^{\,[0,q_\beta]}=[0,q_\beta],
\end{equation*}
and the atom at $q_\beta$ is the only atom of $\mu_\beta$.
\end{theorem}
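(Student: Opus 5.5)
This theorem is quoted from \cite[Theorem~1.1]{Lopatto2026}, so within the present paper the ``proof'' is an appeal to that reference. Still, here is how I would reconstruct it. I would organize it around the first-order optimality conditions for the strictly convex Parisi functional. Let $u=u_{\mu_\beta}$ and let $X_s$ solve the Auffinger--Chen SDE $dX_s=\beta^2\alpha_{\mu_\beta}(s)u_x(s,X_s)\dd s+\beta\dd W_s$ with $X_0=0$. Set
\begin{equation*}
\Gamma(q):=\E\,u_x(q,X_q)^2 .
\end{equation*}
The optimality conditions are $\Gamma(q)=q$ on $\supp\mu_\beta$, and $\int_q^1(\Gamma(s)-s)\dd s\le 0$ for all $q$, with equality on the support. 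Differentiating via It\^o's formula gives
\begin{equation*}
\Gamma'(q)=\beta^2\E u_{xx}(q,X_q)^2,\qquad \Gamma''(q)=\beta^4\E\bigl[u_{xxx}^2+2\alpha(q)u_{xx}^3\bigr](q,X_q).
\end{equation*}

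\emph{Step 1 (no gaps).} Any gap $(a,b)$ of the support with $a,b\in\supp$ would force $\Gamma(a)=a$, $\Gamma(b)=b$, and $\int_a^b(\Gamma-s)\dd s=0$. On the gap $\alpha$ is constant, and the structure of the heat-type PDE with constant coefficient there should make $\Gamma-s$ strictly convex or of fixed sign. I would aim to show $\Gamma''>0$ using the $\Gamma''$ formula together with the bound $u_{xx}\ge0$ (since $\log\cosh$ is convex and convexity propagates). Strict convexity then contradicts the two equalities.

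\emph{Step 2 ($0\in\supp$).} This follows from \cite{ChenHandschyLerman}, using zero field and $\beta>1$, which rules out replica symmetry.

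\emph{Step 3 (regularity on the interior).} On $[0,q_\beta)$ the support is an interval, so $\Gamma(q)=q$ there. Hence $\beta^2\E u_{xx}^2=1$ and $\Gamma''=0$, which yields the identity
\begin{equation*}
\alpha(q)=-\frac{\E u_{xxx}^2}{2\E u_{xx}^3}.
\end{equation*}
The right-hand side depends continuously, and then smoothly, on $q$ by parabolic regularity. Bootstrapping therefore gives $\alpha\in C^\infty([0,q_\beta))$, and $\rho_\beta=\alpha'$. I expect $\rho_\beta\ge0$ to come for free from monotonicity of $\alpha$. The full-support statement for $\rho_\beta$ needs $\alpha$ to be strictly increasing. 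If $\alpha$ were constant on a subinterval, the same rigidity used in Step 1 (the $\Gamma''=0$ identity combined with the constant-$\alpha$ PDE) should give a contradiction.

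\emph{Step 4 (the endpoint).} I would show that $q_\beta<1$ using $\Gamma(1)=\E\tanh^2<1$. The fact that $\mu_\beta$ has an atom at $q_\beta$ with $c_\beta=1-\alpha(q_\beta-)\in(0,1)$ would come from showing that $\lim_{q\uparrow q_\beta}\alpha(q)<1$. I expect this endpoint analysis to be the main obstacle. It amounts to showing that the formula for $\alpha$ stays strictly below $1$ up to the edge, while $\alpha=1$ holds on $[q_\beta,1]$. My first attempt would be to compare $\Gamma''$ on $(q_\beta,1)$, where $\alpha\equiv1$ and $u$ is explicit through a Cole--Hopf transform, with the interior identity, and then use the one-sided condition $\int_q^1(\Gamma-s)\le0$ to obtain a strict jump.

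Uniqueness of the atom then follows because $\alpha$ is continuous on $[0,q_\beta)$.
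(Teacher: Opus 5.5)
\textbf{There is a genuine gap in Step~1, and it is the heart of Lopatto's theorem.}

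\emph{The sign in your formula for $\Gamma''$ is wrong.} Along the diffusion, $\mathrm d\,u_{xx}(s,X_s)=-\beta^2\alpha(s)u_{xx}^2\dd s+\beta u_{xxx}\dd W_s$. Hence $\Gamma''=\beta^4\E[u_{xxx}^2-2\alpha u_{xx}^3]$, as in \eqref{eq:ft-unscaled-C-moment}. Your own Step~3 exposes this: with your sign, $\alpha=-\E u_{xxx}^2/(2\E u_{xx}^3)<0$. The correct quotient is $\E u_{xxx}^2/(2\E u_{xx}^3)$, cf.\ \eqref{eq:zt-gamma-quotient}; with that sign the bootstrapping in your Step~3 is fine.

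\emph{With the correct sign, your mechanism cannot give $\Gamma''>0$.} Convexity $u_{xx}\ge0$ now works against you. The properties you invoke hold for every $\mu$ with a flat piece of $\alpha$, and they do not make $\Gamma''$ positive there. For example, take $\mu$ with an atom $m>0$ at $0$ and $\alpha\equiv m$ on $[0,b)$. Evenness gives $u_{xxx}(0,0)=0$, hence $\Gamma''(0+)=-2\beta^4m\,u_{xx}(0,0)^3<0$.

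\emph{The missing idea is the crossing rule.} This is Proposition~\ref{prop:zt-crossing} here, and \cite[Proposition~9.1]{Lopatto2026} at positive temperature. On a gap one proves only that $\Gamma''(t)=0$ forces $\Gamma'''(t)>0$. This needs three inputs:
\begin{enumerate}[label=\textup{(\roman*)},leftmargin=2.2em]
\item rewriting $\Gamma''$ in slope variables as a positive multiple of $\int_0^1w\Phi\dd B$, with $\Phi=2\mathsf z^2-m\mathsf c$ increasing;
\item the Cole--Hopf slope inequalities of Proposition~\ref{prop:zt-finite-KJ};
\item the transformed density $Q=\rho e^{-mu}$, which solves the heat equation on the gap and has $Q_{xx}/Q$ strictly increasing on $(0,\infty)$ by log-concavity and the TP$_2$ property of the folded heat kernel.
\end{enumerate}
The final contradiction is then not convexity. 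The conditions $\Gamma(a)=a$, $\Gamma(b)=b$ and $\int_a^b(\Gamma-t)\dd t=0$ give $\int_a^b(t-a)(b-t)\Gamma''\dd t=0$. Combined with the crossing rule, this forces $\Gamma''$ to change sign exactly once, from negative to positive. The weighted estimate \eqref{eq:zt-Gamma-strict-moment} then contradicts the stability condition $\Gamma'(a+)=\beta^2\E u_{xx}(a,X_a)^2\le1$, which your proposal never uses.

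\textbf{Smaller corrections.}

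\emph{Your optimality condition is oriented the wrong way.} The support of $\mu_\beta$ lies in the set of \emph{minimizers} of $\phi(q):=\int_q^1(\Gamma(s)-s)\dd s$. Since $\Gamma(1)=\E\tanh^2X_1<1$, $\phi$ is increasing near $1$ with $\phi(1)=0$, so its minimum value is strictly negative. Your version would place the support at maxima of $\phi$, and at second order it would give $\Gamma'\ge1$ instead of the stability inequality $\Gamma'\le1$ needed above.

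\emph{The endpoint atom is the easy part.} Once $\supp\mu_\beta=[0,q_\beta]$ is known, Proposition~\ref{prop:ft-endpoint-identities} gives $\E C_\beta^3/\E C_\beta^2=2/(3-c_\beta)$ without assuming $c_\beta>0$. The monotonicity of $f_\beta$ in Lemma~\ref{lem:ft-endpoint-density} makes this ratio exceed $3/4$, so $c_\beta>1/3$. No comparison of $\Gamma''$ on $(q_\beta,1)$ is needed.

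\emph{Step~2 citation.} At positive temperature, $0\in\supp\mu_\beta$ is \cite[Theorem~1]{AuffingerChenProperties} and holds for every $\beta$ in zero field. The hypothesis $\beta>1$ serves only to give $q_\beta>0$.
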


The closure assertion for the density follows from \eqref{eq:intro-finite-decomposition} and $\supp\mu_\beta=[0,q_\beta]$: if $\rho_\beta(s)\dd s$ vanished on a nonempty relatively open subset of $[0,q_\beta)$, then that subset would be disjoint from $\supp\mu_\beta$.

The positive-temperature contribution of the present paper is the following quantitative refinement at the right endpoint.

\begin{theorem}[Quantitative endpoint control]
\label{thm:finite-temperature}
Let $q_\beta$ and $c_\beta$ be as in Theorem~\ref{thm:finite-temperature-structure}.  Then
\begin{equation}
 c_\beta>\frac13
 \quad\text{and}\quad
 \frac{3-c_\beta}{2\beta^2}<1-q_\beta<\frac2{\beta^2}.
 \label{eq:intro-finite-endpoint-bounds}
\end{equation}
In particular, we have $\frac1{\beta^2}<1-q_\beta<\frac2{\beta^2}$ and $\lim_{\beta\to\infty}q_\beta=1$.
\end{theorem}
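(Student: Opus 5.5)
The plan is to read off everything from the first‑order optimality conditions for $\mu_\beta$ at the right endpoint $q:=q_\beta$, where the Parisi PDE can be solved explicitly. On $[q,1]$ we have $\alpha_\mu\equiv1$, so the Hopf--Cole substitution $e^{u}$ turns the PDE into a backward heat equation. This gives
\begin{equation*}
u_\mu(s,x)=\log\cosh x+\tfrac{\beta^2}{2}(1-s),\qquad s\in[q,1].
\end{equation*}
Hence $u_{\mu,x}=\tanh x$, $u_{\mu,xx}=\sech^2x=:S(x)$, and $u_{\mu,xxx}=-2\tanh x\,\sech^2x$ there.

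Let $X_s$ be the associated diffusion $\dd X_s=\beta^2\alpha_\mu(s)u_{\mu,x}(s,X_s)\dd s+\beta\dd W_s$ with $X_0=0$, and set $\Gamma(s):=\E u_{\mu,x}(s,X_s)^2$. The standard optimality theory for the Parisi functional (Auffinger--Chen) gives two facts: $\Gamma(s)\le s$ on $[0,1]$, and $\Gamma(s)=s$ on $\supp\mu_\beta=[0,q]$. Using Itô's formula together with the Parisi PDE and its $x$‑derivatives, I expect the identities
\begin{equation*}
\Gamma'(s)=\beta^2\E u_{\mu,xx}^2
\quad\text{and}\quad
\tfrac{\dd}{\dd s}\E u_{\mu,xx}^2=\beta^2\bigl(\E u_{\mu,xxx}^2-2\alpha_\mu(s)\E u_{\mu,xx}^3\bigr).
\end{equation*}
The precise coefficient of the $\alpha_\mu$ term is the first thing I would verify carefully.

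Write $a:=\E S(X_q)$, $b:=\E S(X_q)^2$ and $m:=\E S(X_q)^3$, and derive four relations.
\begin{enumerate}[label=(\roman*)]
\item $\Gamma(q)=q$ becomes $1-a=q$, i.e. $a=1-q$.
\item $\Gamma'(q)=1$, which follows because the density is dense in $[0,q]$ and $\Gamma$ is $C^1$, becomes $\beta^2 b=1$.
\item $\Gamma''(q^-)=0$ uses $\alpha_\mu(q^-)=1-c_\beta$, since $\Gamma(s)=s$ on $[0,q]$. With $\tanh^2=1-S$ it reads $4(b-m)=2(1-c_\beta)m$, i.e. $m=\frac{2b}{3-c_\beta}$.
\item $\Gamma''(q^+)\le0$ uses $\alpha_\mu=1$ and is forced by $\Gamma\le s$ together with $\Gamma(q)=q$ and $\Gamma'(q)=1$. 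It gives $4(b-m)\le 2m$, i.e. $m\ge\tfrac23 b$.
\end{enumerate}

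The lower bound on $1-q$ then follows from Cauchy--Schwarz, $b^2=(\E S\cdot S)^2\le \E S\,\E S^3=am$. This gives $a\ge b^2/m=b(3-c_\beta)/2$, which is $1-q_\beta\ge\frac{3-c_\beta}{2\beta^2}$. Strictness holds because $S(X_q)$ is non‑degenerate, as $X_q$ has a positive density. Since $0<S<1$ almost surely, $m<b$, which gives $c_\beta<1$ and hence the stated $1-q_\beta>1/\beta^2$.

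For $c_\beta>\tfrac13$ and $1-q_\beta<2/\beta^2$, I would look for a reverse inequality on $m$ and on $a$ in terms of $b$. Inequality (iv) gives only $c_\beta\ge0$, so it is not enough. Two routes to try:
\begin{itemize}
\item A sharper moment comparison for $Z=X_q$. Its law is a Gaussian mixture tilted by the Parisi drift. Relations of the form $\E\sech^6 Z\ge\kappa\,\E\sech^4Z$ or $\E\sech^2Z\le 2\E\sech^4Z$ should come from the symmetry and log‑concavity of the law of $Z$.
\item Higher‑order conditions: $\Gamma'''(q^-)=0$ (if $\rho_\beta$ extends smoothly to $q$), or the sign of $\Gamma'''(q^+)$ whenever $\Gamma''(q^+)=0$.
\end{itemize}
Plugging a bound $m<\tfrac34 b$ into (iii) yields $c_\beta>\tfrac13$, and $a<2b$ is exactly the upper bound. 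These two reverse inequalities are the step I expect to be the main obstacle. The rest is bookkeeping once the derivative identities for $\Gamma$ are nailed down. The limit $q_\beta\to1$ is immediate from the two‑sided bound.
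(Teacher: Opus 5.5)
You derive the endpoint identities $a=1-q_\beta$, $\beta^2b=1$, $m=2b/(3-c_\beta)$ and the lower bound $1-q_\beta>(3-c_\beta)/(2\beta^2)$ by strict Cauchy--Schwarz. This is exactly the paper's argument. The two remaining claims, $c_\beta>\frac13$ and $1-q_\beta<2/\beta^2$, are the real content of the theorem, and your proposal leaves them unproved.

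There is also a direction slip. Since $m/b=2/(3-c_\beta)$ is increasing in $c_\beta$, what you need is $m>\frac34b$; the bound $m<\frac34b$ would give $c_\beta<\frac13$.

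Your ``higher-order'' route does not help either. With $\alpha=1$ on $(q_\beta,1)$ and your relation (iii), $\Gamma''(q_\beta^+)=\beta^4(4b-6m)=-4\beta^2c_\beta/(3-c_\beta)<0$, so the case $\Gamma''(q_\beta^+)=0$ never occurs. The condition $\Gamma'''(q_\beta^-)=0$ would only bring in new unknowns, such as $\rho_\beta(q_\beta^-)$ and $\E\sech^8(X_{q_\beta})$.

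The missing idea is a monotonicity property of the tilted density, not of the law of $X_{q_\beta}$. Let $p_\beta$ be the density of $X_{q_\beta}$. The paper sets $f_\beta(x):=p_\beta(x)e^{-u(q_\beta,x)}$, a constant multiple of $p_\beta(x)/\cosh x$. It proves (Lemma~\ref{lem:ft-endpoint-density}) that $f_\beta$ is positive, even, integrable, nonconstant, and nonincreasing on $[0,\infty)$. The mechanism has four steps.
\begin{itemize}
\item On intervals where $\alpha$ is constant, the drift cancels and $e^{-\alpha u}p$ solves the heat equation, which preserves symmetric decreasing functions.
\item At an atom of size $d$, the function is multiplied by $e^{-du}$, which is symmetric decreasing because $u$ is even and convex.
\item The evolution starts from a multiple of $\delta_0$.
\item One passes from finitely supported measures, keeping the atom $c_\beta\delta_{q_\beta}$, to $\mu_\beta$ via a Brownian-bridge representation.
\end{itemize}

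The $\cosh$ tilt is what produces the right weights: $\E\sech^{2k}(X_{q_\beta})$ is proportional to $\int_0^\infty f_\beta(x)\sech^{2k-1}x\dd x$. Apply the strict Chebyshev (covariance) inequality to the nonincreasing $f_\beta$ and the strictly decreasing $\sech^2$, under the probability densities proportional to $\sech^3$ and to $\sech$ on $[0,\infty)$. This gives
\[
 \frac mb>\frac{\int_0^\infty\sech^5x\dd x}{\int_0^\infty\sech^3x\dd x}=\frac34
 \quad\text{and}\quad
 \frac ba>\frac{\int_0^\infty\sech^3x\dd x}{\int_0^\infty\sech x\dd x}=\frac12.
\]
These are exactly $c_\beta>\frac13$ and $1-q_\beta=a<2b=2/\beta^2$.

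Your proposed substitute, symmetry and log-concavity of the law of $X_{q_\beta}$ itself, is not established and should not be expected. That density is a multiple of $f_\beta(x)\cosh x$, and the outward drift $\beta^2\alpha\,u_x$ pushes mass away from the origin.
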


This shows that the support of the Parisi measure at $\beta$ converges to $[0,1]$ as $\beta\to\infty$. We next describe the structure of the Parisi measure at $\beta=\infty$.

The canonical zero-temperature order parameter is not the ordinary weak limit of the probability measures $\mu_\beta$. Define
\begin{equation}
 \cU:=\left\{\gamma:[0,1)\to[0,\infty):\gamma\text{ is nondecreasing and right-continuous and }\int_0^1\gamma(t)\dd t<\infty\right\}.
 \label{eq:intro-zero-class}
\end{equation}
For $\gamma\in\cU$, our Stieltjes convention is
\begin{equation*}
 \gamma(0-):=0,
 \qquad
 \mathrm d\gamma(\{0\}):=\gamma(0)
 \quad\text{and}\quad
 \mathrm d\gamma([0,t]):=\gamma(t).
\end{equation*}
Thus $\nu:=\mathrm d\gamma$ is locally finite on $[0,1)$ and Tonelli's theorem gives the equivalent weighted finiteness relation
\begin{equation*}
 \int_0^1\gamma(t)\dd t
 =\int_{[0,1)}(1-s)\,\nu(\mathrm d s)<\infty.
\end{equation*}
In the SK normalization $\xi(t):=t^2/2$, let $u_\gamma$ solve
\begin{equation}
 \partial_tu_\gamma+\frac12\left(u_{\gamma,xx}+\gamma(t)u_{\gamma,x}^2\right)=0
 \quad\text{and}\quad
 u_\gamma(1,x)=|x|.
 \label{eq:intro-zero-PDE}
\end{equation}
The zero-temperature Parisi functional is
\begin{equation}
 \cP(\gamma):=u_\gamma(0,0)-\frac12\int_0^1t\gamma(t)\dd t.
\label{eq:intro-zero-functional}
\end{equation}
The zero-temperature Parisi formula states that
\begin{equation*}
 \lim_{N\to\infty}G_N
 =\inf_{\gamma\in\cU}\cP(\gamma);
\end{equation*}
see \cite[Theorem 1]{AuffingerChenGroundState}. By \cite[Theorem 4]{ChenHandschyLerman}, the functional in \eqref{eq:intro-zero-functional} has a unique minimizer $\gamma_\star$. Set $\nu_\star:=\mathrm d\gamma_\star$.

\begin{theorem}[Zero-temperature structure]
\label{thm:zero-temperature}
There exists a nonnegative function $\rho_\infty\in C^\infty([0,1))$ such that
\begin{equation}
 \gamma_\star(0)=0,
 \qquad
 \nu_\star(\mathrm d t)=\mathrm d\gamma_\star(t)=\rho_\infty(t)\dd t
 \quad\text{and}\quad
 \rho_\infty=\gamma_\star'.
 \label{eq:zero-density-decomposition}
\end{equation}
The measure in \eqref{eq:zero-density-decomposition} has full support on $[0,1)$:
\begin{equation}
 \supp_{[0,1)}\nu_\star=[0,1)
 \quad\text{and}\quad
 \overline{\supp_{[0,1)}\nu_\star}^{\,[0,1]}=[0,1].
\label{eq:zero-main}
\end{equation}
In particular, $\nu_\star$ has neither an atom nor a singular continuous component on $[0,1)$.
\end{theorem}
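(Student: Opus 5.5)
The plan is to obtain $\gamma_\star$ as the limit of the rescaled positive-temperature minimizers and transfer the structure from Theorem~\ref{thm:finite-temperature-structure} and Theorem~\ref{thm:finite-temperature}. The standard correspondence (as in \cite{AuffingerChenGroundState,ChenHandschyLerman}) is
\begin{equation*}
 \gamma_\beta(t):=\beta\,\alpha_{\mu_\beta}(t),\qquad t\in[0,1).
\end{equation*}
With this choice, $\beta^{-1}\mathcal P_\beta(\mu_\beta)$ is, up to $O(\beta^{-1})$ errors coming from $\beta^{-1}\log\cosh(\beta x)\to|x|$, equal to $\cP(\gamma_\beta)$. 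The three steps, in order, are as follows.

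\textbf{Step 1 (compactness).} I will show that the family $\gamma_\beta$ is precompact in $\cU$ and that every subsequential limit equals $\gamma_\star$, using uniqueness of the minimizer from \cite{ChenHandschyLerman}. By Theorem~\ref{thm:finite-temperature}, $\gamma_\beta(t)\leq\beta(1-c_\beta)$ on $[q_\beta,1)$. The smooth part of $\gamma_\beta$ lives on $[0,q_\beta)$ with $1-q_\beta\asymp\beta^{-2}$. Hence
\begin{equation*}
 \int_0^1\gamma_\beta\,\mathrm dt\leq\int_0^{q_\beta}\gamma_\beta\,\mathrm dt+O(\beta^{-1}).
\end{equation*}
To bound $\int_0^{q_\beta}\gamma_\beta$ uniformly, I will compare $\cP(\gamma_\beta)$ with a fixed competitor; the quadratic penalty $-\frac12\int t\gamma$ then controls mass away from $1$. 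Helly's theorem then gives pointwise convergence on $[0,1)$. Since $\gamma_\star(t)\to\infty$ as $t\to1$ is allowed, nothing is lost at the endpoint. This is also where $q_\beta\to1$ enters: the atom of $\mu_\beta$ at $q_\beta$ disappears in the limit, because it escapes to $t=1$, which lies outside $[0,1)$. Consequently the limit carries no atom coming from $c_\beta$.

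\textbf{Step 2 (support $[0,1)$ and $\gamma_\star(0)=0$).} I will avoid passing support information through weak limits, which only gives upper semicontinuity. Instead I will argue directly with the zero-temperature first-order optimality conditions. Let $\Gamma(t):=\E[u_{\gamma_\star,x}(t,X_t)^2]$ along the optimal SDE $\mathrm dX_t=\gamma_\star(t)u_{\gamma_\star,x}(t,X_t)\,\mathrm dt+\mathrm dW_t$. Optimality gives $\Gamma(s)=s$ on $\supp\nu_\star$, and $\int_s^1(\Gamma(t)-t)\,\mathrm dt\le0$ everywhere, with equality on the support.

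Suppose $(a,b)\subset[0,1)$ is a gap of $\supp\nu_\star$. On the gap $\gamma_\star$ is constant, so $\Gamma$ solves an explicit equation. The Itô computation gives $\Gamma''(t)=\E[\gamma_\star(t)^2\cdot(\text{positive})\,u_{xx}^2]$-type terms, so $\Gamma''>0$ strictly inside the gap. Strict convexity together with $\Gamma(a)=a$ and $\Gamma(b)=b$ contradicts the integral equality on $[a,b]$. The case $b=1$ cannot occur, since otherwise $\gamma_\star$ would be bounded, and the boundary behaviour $u_{\gamma,x}^2\to1$ at $t=1$ violates optimality. For $\gamma_\star(0)=0$, an atom at $0$ would force $\Gamma(0)=0$; this holds automatically in zero field, so I will use $0\in\supp$ from \cite{ChenHandschyLerman} together with a perturbation of the mass at $0$ to show the atom vanishes.

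\textbf{Step 3 (smoothness and absolute continuity).} Since $\Gamma(t)=t$ on all of $[0,1)$, differentiating along the support gives identities of the form
\begin{equation*}
 \Gamma'(t)=1=\E[u_{xx}^2],\qquad \Gamma''(t)=0.
\end{equation*}
The second identity expresses $\gamma_\star(t)$ as a ratio of expectations of derivatives of $u_{\gamma_\star}$ along $X$, for example
\begin{equation*}
 \gamma_\star=\frac{\E[u_{xxx}^2]}{2\E[u_{xx}^3]}\cdot(\text{const}).
\end{equation*}
Bootstrapping regularity of $u_{\gamma_\star}$ in $t$ then yields $\gamma_\star\in C^\infty([0,1))$. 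In particular $\nu_\star=\gamma_\star'\,\mathrm dt$, which rules out atoms and any singular continuous component on $[0,1)$.

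\textbf{Main obstacle.} The hardest part is Step 2, specifically establishing strict positivity of $\Gamma''$ on gaps and the denominator bound $\E[u_{xx}^3]>0$ needed in Step 3. At zero temperature both are delicate near $t=1$, where $u_{xx}$ concentrates at $x=0$. I would first try to get these bounds locally on $[0,T]$ for each $T<1$, where the PDE is uniformly parabolic, and then let $T\to1$.
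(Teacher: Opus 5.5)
\textbf{There is a genuine gap, and it sits in the core of your Step 2.} The claim that an It\^o computation makes $\Gamma''>0$ strictly inside a gap is false.

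On a gap where $\gamma_\star=m$, the computation gives $\Gamma'(t)=\E\,\mathsf C(t,X_t)^2$ and $\Gamma''(t)=\E[\mathsf D(t,X_t)^2-2m\,\mathsf C(t,X_t)^3]$, with $\mathsf C=u_{xx}>0$ and $\mathsf D=u_{xxx}$. The two terms compete, so there is no sign.

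Moreover, no argument could give strict convexity on a gap. Your own optimality conditions force $\Gamma(a)=a$, $\Gamma(b)=b$ and $\int_a^b(\Gamma(t)-t)\dd t=G(a)-G(b)=0$. Hence $\Gamma-t$ has an interior zero or vanishes identically. Rolle's theorem, applied to $\Gamma-t$ and then to $\Gamma'-1$, shows that $\Gamma''$ vanishes somewhere in $(a,b)$. So ``$\Gamma''>0$ on the gap'' is equivalent to the nonexistence of the gap, not a route to it. The real difficulty is this sign structure of $\Gamma''$, even on compact subintervals, not the degeneracy near $t=1$ that you flag.

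The missing idea is the paper's crossing rule (Proposition~\ref{prop:zt-crossing}): $\Gamma''(t)=0\Rightarrow\Gamma'''(t)>0$ on any gap with $m>0$, where $m>0$ uses $0\in S$. Its proof writes $\Gamma''=2\int_0^1 w\Phi\,\dd B$ in the slope variable $B=u_x$. It then needs three nontrivial inputs:
\begin{itemize}
\item monotonicity of $H=Q_{xx}/Q$ for $Q=\rho_te^{-\gamma u}$, obtained from the strictly TP$_2$ folded heat kernel together with $\mathsf D\le0$ at jumps;
\item Lopatto's inequalities $\mathsf K,\mathsf K_B,\mathsf J,\mathsf J_B\ge0$ and $\mathsf c\mathsf J_B\le3\mathsf z\mathsf J$, rescaled to zero temperature;
\item Gaussian tail bounds.
\end{itemize}
Even then, the contradiction does not come from convexity. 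Exactly one sign change of $\Gamma''$, from negative to positive, gives $\Gamma(b)-\Gamma(a)<(b-a)\Gamma'(a+)$. The argument closes only with the stability condition $\Gamma'(a+)=\E\,\mathsf C(a,X_a)^2\le1$ at $a\in S$, which is absent from your plan.

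\textbf{Secondary issues.}
\begin{itemize}
\item \emph{Sign of the first-order condition.} It is $G(q)=\int_q^1(\Gamma-t)\dd t\ge0$, since adding an atom cannot decrease $\cP$; you wrote $\le0$. This matters for the terminal gap. There the paper shows that if $\gamma_\star$ is constant on $(a,1)$, then $1-\Gamma(t)\sim\frac{4}{\sqrt\pi}\rho_1(0)\sqrt{1-t}$. Hence $\Gamma(t)-t<0$ near $1$, which contradicts $G\ge0$. The bare fact $u_x^2\to1$ proves nothing, because $t\to1$ as well; the $\sqrt{1-t}$ rate is the whole point.
\item \emph{The atom at zero.} Perturbing the mass at $0$ only yields $G(0)=0$, which is already known, so it cannot kill the atom. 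In the paper, $\gamma_\star(0)=0$ falls out of your own Step~3 quotient $\E\,\mathsf D^2/(2\E\,\mathsf C^3)$ at $t=0$, using right-continuity. That quotient vanishes at $t=0$ because $X_0=0$ and $\mathsf D(0,0)=0$ by evenness.
\item \emph{Step 3.} The denominator $\E\,\mathsf C^3>0$ is trivial, since $u_{xx}>0$. The actual subtlety is that $\Gamma'$ is a priori only absolutely continuous, so $\E\,\mathsf D^2=2\gamma_\star\E\,\mathsf C^3$ holds only almost everywhere. One needs continuity of the quotient and right-continuity of $\gamma_\star$ before bootstrapping. With that adjustment, your Step~3 is Proposition~\ref{prop:zt-smoothness}.
\item \emph{Step 1.} It plays no role, since uniqueness already identifies the minimizer. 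It also contains an error: $\gamma_\beta=\beta$ on $[q_\beta,1)$, not $\le\beta(1-c_\beta)$.
\end{itemize}
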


\begin{remark}[The right endpoint at zero temperature]
\label{rem:zero-endpoint-convention}
The half-open interval in Theorem~\ref{thm:zero-temperature} is essential.  The zero-temperature PDE and functional depend only on $\gamma_\star(t)$ for $t<1$.  Therefore adjoining an endpoint atom,
\begin{equation*}
 \overline\nu_{\star,c}
 :=\rho_\infty(t)\dd t+c\delta_1,
 \qquad c\geq0,
\end{equation*}
does not change either object and cannot be detected by the zero-temperature variational problem.  Thus there is no canonical zero-temperature endpoint mass to which $c_\beta$ could converge.  If one insists on a closed-interval convention with an atom at the right endpoint, its coefficient must be selected by an additional limiting prescription.  The positive-temperature boundary layer studied in Proposition~\ref{prop:ft-endpoint-laws} is one such prescription.
\end{remark}

\begin{remark}[Positive- and zero-temperature normalizations]
\label{rem:normalizations}
The scaled measures $\beta\mu_\beta$ converge vaguely on $[0,1)$ to $\nu_\star$ in the zero-temperature theory; see \cite[proof of Theorem 2]{AuffingerChenZeng}. The unscaled probability measures instead satisfy
\begin{equation*}
\lim_{\beta\to\infty}\mu_\beta=\delta_1
\quad\text{weakly on }[0,1].
\end{equation*}
Indeed, for $T<1$, choose $f\in C_c([0,1))$ with $f\geq1$ on $[0,T]$. Vague convergence makes $\int f\,\dd(\beta\mu_\beta)$ bounded, and therefore $\mu_\beta([0,T])=O(\beta^{-1})$. Thus Theorems~\ref{thm:finite-temperature-structure} and~\ref{thm:finite-temperature} concern the positive-temperature probability measures and their endpoint layer, whereas Theorem~\ref{thm:zero-temperature} concerns the scaled zero-temperature Stieltjes measure.
\end{remark}

Vague convergence of finite measures means convergence against every continuous compactly supported function.

The support problem has developed through several complementary results. Auffinger and Chen proved that the origin belongs to the support of every positive-temperature Parisi measure and that the distribution function is smooth on every interval contained in the support; see \cite[Theorems 1 and 2]{AuffingerChenProperties}. Jagannath and Tobasco obtained the variational optimality conditions used below; see \cite[Corollary 3.6 and Proposition 1.1]{JagannathTobasco2017}. At zero temperature, Auffinger, Chen, and Zeng proved that the Parisi measure has infinitely many support points in \cite[Theorem 1]{AuffingerChenZeng}. At positive temperature, Zhou proved the complete interval-and-atom description immediately below the critical temperature in \cite[Theorem 1]{Zhou2025}. Lopatto subsequently proved the complete positive-temperature characterization for every $\beta>1$; this is Theorem~\ref{thm:finite-temperature-structure}.

The support describes more than the number of levels of replica symmetry breaking. In the ultrametric description of the asymptotic Gibbs measure, proved under the Ghirlanda--Guerra identities in \cite[Theorem 1]{PanchenkoUltrametricity}, the points in the support of the Parisi measure correspond to levels in the hierarchy; see \cite[Section~1]{AuffingerChenZeng}. At zero temperature, the same support has a direct interpretation in terms of near-ground states. For every $u\in\supp\nu_\star$ and every $\varepsilon,\eta>0$, there exists $K=K(u,\varepsilon,\eta)>0$ such that, with probability at least $1-Ke^{-N/K}$, there are $\sigma^1,\sigma^2\in\Sigma_N$ satisfying
\begin{equation*}
 |R(\sigma^1,\sigma^2)-u|<\varepsilon
 \quad\text{and}\quad
 \min_{\ell\in\{1,2\}}\frac{H_N(\sigma^\ell)}N\geq\lim_{M\to\infty}G_M-\eta,
\end{equation*}
where $R(\sigma^1,\sigma^2):=N^{-1}\sum_{i=1}^N\sigma_i^1\sigma_i^2$; see \cite[Equation~(6)]{AuffingerChenZeng}. Thus Theorem~\ref{thm:zero-temperature} shows that every $u\in[0,1)$ is approximated by overlaps of pairs of near-ground states.

Full support is also relevant to optimization algorithms. Montanari's incremental approximate-message-passing algorithm assumes that the positive-temperature Parisi distribution function is strictly increasing on its support; under this assumption, it finds a $(1-\varepsilon)$-optimal SK configuration in time $C(\varepsilon)N^2$; see \cite[Assumption~1 and Theorem~2]{MontanariSKOptimization}. Theorem~\ref{thm:finite-temperature-structure} verifies this assumption for every $\beta>1$. At zero temperature, El Alaoui, Montanari, and Sellke formulate the corresponding no-overlap-gap assumption as the existence of a strictly increasing minimizer $\gamma$ of \eqref{eq:intro-zero-functional}; see \cite[Assumption~2]{ElAlaouiMontanariSellke}. Under this assumption, their message-passing algorithm reaches energy within any fixed $\varepsilon>0$ of the limiting optimum; see \cite[Corollary~2.2]{ElAlaouiMontanariSellke}. Since Theorem~\ref{thm:zero-temperature} gives $\supp\mathrm d\gamma_\star=[0,1)$, it implies
\begin{equation*}
 \gamma_\star(t)-\gamma_\star(s)
 =\nu_\star((s,t])>0,
 \qquad \forall\,0\leq s<t<1,
\end{equation*}
and therefore verifies this zero-temperature no-overlap-gap assumption for the SK model.

\medskip

\noindent
\textbf{Organization of the paper.}
We do not reprove any part of the positive-temperature characterization in Theorem~\ref{thm:finite-temperature-structure}. Section~\ref{sec:finite-temperature-proof} uses it as an input and proves only the endpoint identities, the lower bound $c_\beta>1/3$, the two-sided estimate for $1-q_\beta$, and the compactness statement for the endpoint laws. Sections~\ref{sec:zero-temperature-structure} and~\ref{sec:zero-temperature-support} prove Theorem~\ref{thm:zero-temperature}. Appendix~\ref{app:finite-KJ} proves the finite Cole--Hopf inequalities used in the zero-temperature argument, and Appendix~\ref{app:technical-lemmas} collects the proofs of the remaining technical lemmas. In particular, Appendix~\ref{app:ft-endpoint-density} proves the transformed-density monotonicity used in Section~\ref{sec:finite-temperature-proof}.

Finally, Appendix~\ref{app:development} clarifies the relation of the present manuscript to the aiXiv preprint \cite{AIxivPreprint} and to the second and third arXiv versions of Lopatto's recent preprint \cite{Lopatto2026v2,Lopatto2026}. Except for Theorem~\ref{thm:finite-temperature-structure}, which is taken from \cite{Lopatto2026}, the arguments developed here expand those of \cite{AIxivPreprint}, which in turn used \cite{Lopatto2026v2} as an input and source of ideas. Apart from Theorem~\ref{thm:finite-temperature-structure} and well-established results cited in the text, the manuscript is self-contained.

\section{Positive-temperature endpoint estimates}
\label{sec:finite-temperature-proof}

Throughout this section, $q_\beta$, $c_\beta$, and $\rho_\beta$ are the objects supplied by Theorem~\ref{thm:finite-temperature-structure}. Define
\begin{equation*}
 u:=u_{\mu_\beta}
 \quad\text{and}\quad
 \alpha(s):=\mu_\beta([0,s]),
\end{equation*}
and let the optimal diffusion solve
\begin{equation*}
 \dd X_s=\beta^2\alpha(s)u_x(s,X_s)\dd s+\beta\dd W_s
 \quad\text{and}\quad X_0=0.
\end{equation*}
Define
\begin{equation*}
 \Gamma(s):=\E[u_x(s,X_s)^2]
 \quad\text{and}\quad
 C_\beta:=\sech^2(X_{q_\beta}).
\end{equation*}
The variational self-consistency theorem for Parisi minimizers, written in the diffusion representation used here, states that
\begin{equation}
 \Gamma(s)=s,
 \qquad
 s\in\supp\mu_\beta;
 \label{eq:ft-self-consistency}
\end{equation}
see \cite[Theorem~5]{AuffingerChenProperties}. Since Theorem~\ref{thm:finite-temperature-structure} gives $\supp\mu_\beta=[0,q_\beta]$, \eqref{eq:ft-self-consistency} holds for every $s\in[0,q_\beta]$. This identity is the variational input for the endpoint moment computation below.

\begin{proposition}[Endpoint moment identities]
\label{prop:ft-endpoint-identities}
One has
\begin{align}
 1-q_\beta&=\E C_\beta\quad\text{and}\quad \beta^2\E C_\beta^2=1,
 \label{eq:ft-endpoint-first-two}\\
 \text{and}\quad \frac{\E C_\beta^3}{\E C_\beta^2}
 &=\frac2{3-c_\beta}.
 \label{eq:ft-endpoint-atom-ratio}
\end{align}
\end{proposition}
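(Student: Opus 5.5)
The plan is to use the explicit form of $u$ on the top interval $[q_\beta,1]$, where $\alpha\equiv1$, and then to differentiate the self-consistency identity \eqref{eq:ft-self-consistency} once and twice at $s=q_\beta$ from the left. Write $q=q_\beta$ and $c=c_\beta$.

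\emph{Step 1: the explicit solution above $q$.} Since $\alpha=1$ on $[q,1]$, the Cole--Hopf transform linearizes the Parisi PDE there. Using $\E\cosh(x+\beta\sqrt{1-s}\,Z)=\cosh(x)e^{\beta^2(1-s)/2}$, it gives
\begin{equation*}
u(s,x)=\log\cosh x+\tfrac{\beta^2}{2}(1-s),\qquad s\in[q,1].
\end{equation*}
Hence at $s=q$ we have $u_x=\tanh x$, $u_{xx}=\sech^2x$ and $u_{xxx}=-2\sech^2x\tanh x$. With $C_\beta=\sech^2(X_q)$ this yields
\begin{equation*}
u_x(q,X_q)^2=1-C_\beta,\qquad u_{xx}(q,X_q)^2=C_\beta^2,\qquad u_{xxx}(q,X_q)^2=4C_\beta^2(1-C_\beta).
\end{equation*}
Since $q\in\supp\mu_\beta$, \eqref{eq:ft-self-consistency} gives $q=\Gamma(q)=1-\E C_\beta$. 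This is the first identity in \eqref{eq:ft-endpoint-first-two}.

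\emph{Step 2: Itô dynamics of the derivatives.} Differentiating the PDE in $x$ and applying Itô's formula along the optimal diffusion, the drift $\beta^2\alpha u_x$ cancels exactly against the nonlinear term. This gives
\begin{equation*}
\mathrm d u_x(s,X_s)=\beta u_{xx}\dd W_s
\qquad\text{and}\qquad
\mathrm d u_{xx}(s,X_s)=-\beta^2\alpha(s)u_{xx}^2\dd s+\beta u_{xxx}\dd W_s .
\end{equation*}
Consequently
\begin{equation*}
\Gamma'(s)=\beta^2\E[u_{xx}(s,X_s)^2]
\end{equation*}
and
\begin{equation*}
\frac{\mathrm d}{\mathrm ds}\E[u_{xx}^2]=-2\beta^2\alpha(s)\E[u_{xx}^3]+\beta^2\E[u_{xxx}^2].
\end{equation*}
By \eqref{eq:ft-self-consistency} and Theorem~\ref{thm:finite-temperature-structure}, $\Gamma(s)=s$ on all of $[0,q]$. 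Therefore $\Gamma'\equiv1$ and $\Gamma''\equiv0$ on $(0,q)$.

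\emph{Step 3: passing to $s\uparrow q$.} Let $s\uparrow q$ in $\Gamma'(s)=1$. Continuity of $s\mapsto\E[u_{xx}(s,X_s)^2]$ up to $s=q$ gives $\beta^2\E C_\beta^2=1$, the second identity in \eqref{eq:ft-endpoint-first-two}.

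For the third identity, on $(0,q)$ we have $\alpha(s)\to\alpha(q-)=\mu_\beta([0,q))=1-c$, because the only atom sits at $q$. Letting $s\uparrow q$ in $\Gamma''=0$ gives
\begin{equation*}
4\bigl(\E C_\beta^2-\E C_\beta^3\bigr)=2(1-c)\E C_\beta^3 ,
\end{equation*}
that is, $4\E C_\beta^2=(6-2c)\E C_\beta^3$. This is exactly \eqref{eq:ft-endpoint-atom-ratio}.

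\emph{Main obstacle: justifying the limits $s\uparrow q$.} I expect the main difficulty to be regularity rather than algebra. One must show that $u_x,u_{xx},u_{xxx}$ are continuous in $s$ up to $s=q$ from below, with bounds uniform in $x$. One must also justify the Itô computations and the exchange of $\mathrm d/\mathrm ds$ with $\E$, so that $\E[u_{xx}^2]$ and $\E[u_{xx}^3]$, $\E[u_{xxx}^2]$ along $X_s$ have the claimed left limits.

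I would handle this as follows. The $x$-derivatives of $u$ are bounded, uniformly in $s$, by the standard Parisi PDE estimates (weak solutions with $\alpha$ merely bounded and measurable). Moreover, $u$ is continuous in $s$ across the jump of $\alpha$, since $\alpha$ enters the equation only through a bounded coefficient. Finally, $X$ has continuous paths and the integrands are bounded, so dominated convergence passes all limits inside the expectations.
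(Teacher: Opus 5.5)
Your proposal is correct and follows essentially the same route as the paper. Both use the explicit Cole--Hopf solution on $[q_\beta,1]$, the It\^o identities $\Gamma'=\beta^2\E u_{xx}^2$ and $\frac{\mathrm d}{\mathrm ds}\E u_{xx}^2=\beta^2\E[u_{xxx}^2-2\alpha u_{xx}^3]$, and the left limit $s\uparrow q_\beta$ with $\alpha(q_\beta-)=1-c_\beta$. The only soft spot is the regularity justification: continuity of $u$ in $s$ across the jump of $\alpha$ does not by itself give the left limits of $\E u_{xx}^2$, $\E u_{xx}^3$ and $\E u_{xxx}^2$. What you need is bounded, jointly continuous spatial derivatives on $[0,1]\times\R$, which the paper imports from \cite[Proposition~1(i)]{AuffingerChenProperties}.
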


\begin{proof}
Since $\alpha=1$ on $[q_\beta,1]$, the Cole--Hopf formula gives
\begin{equation}
 u(q_\beta,x)=\log\cosh x
 +\frac{\beta^2}{2}(1-q_\beta).
 \label{eq:ft-terminal-value-at-q}
\end{equation}
Consequently,
\begin{equation*}
 u_x(q_\beta,x)=\tanh x
 \quad\text{and}\quad
 u_{xx}(q_\beta,x)=\sech^2x.
\end{equation*}
The first endpoint identity is
\begin{equation*}
 1-q_\beta
 \stackref{=}{\eqref{eq:ft-self-consistency}}
 \E\left[1-u_x(q_\beta,X_{q_\beta})^2\right]
 \stackref{=}{\eqref{eq:ft-terminal-value-at-q}}
 \E C_\beta.
\end{equation*}

For completeness, differentiate the two quantities needed at the endpoint. Put $\mathsf B:=u_x$, $\mathsf C:=u_{xx}$, and $\mathsf D:=u_{xxx}$. It\^o's formula along $X$ gives, for $0<s<q_\beta$,
\begin{align}
 \Gamma'(s)&=\beta^2\E\,\mathsf C(s,X_s)^2,
 \label{eq:ft-unscaled-Gamma-prime}\\
 \frac{\dd}{\dd s}\E\,\mathsf C(s,X_s)^2
 &=\beta^2\E\left[\mathsf D(s,X_s)^2
       -2\alpha(s)\mathsf C(s,X_s)^3\right].
 \label{eq:ft-unscaled-C-moment}
\end{align}
By \cite[Proposition~1(i)]{AuffingerChenProperties}, every positive-order spatial derivative of $u$ is bounded and jointly continuous on $[0,1]\times\R$. Hence the stochastic integrals arising in the two It\^o formulas above are square-integrable martingales and have mean zero. By \eqref{eq:ft-self-consistency}, $\Gamma'(s)=1$ for $0<s<q_\beta$, so the first identity gives
\begin{equation*}
 \E\,\mathsf C(s,X_s)^2=\frac1{\beta^2},
 \qquad 0<s<q_\beta.
\end{equation*}
Continuity as $s\uparrow q_\beta$ gives
\begin{equation*}
 \E C_\beta^2
 =\lim_{s\uparrow q_\beta}\E\,\mathsf C(s,X_s)^2
 \stackref{=}{\eqref{eq:ft-unscaled-Gamma-prime},\,\eqref{eq:ft-self-consistency}}
 \frac1{\beta^2},
\end{equation*}
which proves the second identity in \eqref{eq:ft-endpoint-first-two}. The second line above also gives
\begin{equation*}
 \E\left[\mathsf D(s,X_s)^2-2\alpha(s)\mathsf C(s,X_s)^3\right]=0,
 \qquad 0<s<q_\beta.
\end{equation*}
Now
\begin{equation*}
 \lim_{s\uparrow q_\beta}\alpha(s)=1-c_\beta,
\end{equation*}
while \eqref{eq:ft-terminal-value-at-q} yields
\begin{equation*}
 \mathsf C(q_\beta,X_{q_\beta})=C_\beta
 \quad\text{and}\quad
 \mathsf D(q_\beta,X_{q_\beta})
 =-2\tanh(X_{q_\beta})C_\beta.
\end{equation*}
Passing to the limit gives
\begin{equation*}
 0\stackref{=}{\eqref{eq:ft-unscaled-C-moment}}
 4\E(C_\beta^2-C_\beta^3)
 -2(1-c_\beta)\E C_\beta^3,
\end{equation*}
which rearranges to \eqref{eq:ft-endpoint-atom-ratio}.
\end{proof}

Let $p_\beta$ denote the density of $X_{q_\beta}$ and set
\begin{equation}
 f_\beta(x):=p_\beta(x)e^{-u(q_\beta,x)}.
 \label{eq:ft-endpoint-f}
\end{equation}

\begin{lemma}[Endpoint transformed density]
\label{lem:ft-endpoint-density}
The function $f_\beta$ is continuous, strictly positive, even, integrable, and nonincreasing on $[0,\infty)$.  It is not constant on that half-line.
\end{lemma}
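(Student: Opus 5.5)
The plan is to obtain all the qualitative properties of $f_\beta$ except monotonicity directly from the diffusion, and to get monotonicity by propagating a transformed density forward in time and then crossing the atom at $q_\beta$.

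\emph{Elementary properties.} The drift $\beta^2\alpha(s)u_x(s,x)$ is bounded and smooth in $x$ by \cite[Proposition~1(i)]{AuffingerChenProperties}. Girsanov's theorem then shows that $X_{q_\beta}$ has a continuous, strictly positive density $p_\beta$, comparable to a Gaussian density. Uniqueness of the Parisi PDE and evenness of $\log\cosh$ make $u(s,\cdot)$ even, so $u_x(s,\cdot)$ is odd. The law of $X$ is then invariant under $X\mapsto -X$, and $p_\beta$ is even. By \eqref{eq:ft-terminal-value-at-q}, $e^{-u(q_\beta,x)}=e^{-\beta^2(1-q_\beta)/2}\sech x\le 1$. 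Hence $f_\beta$ is continuous, positive, even, and integrable.

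\emph{Monotonicity for $s<q_\beta$.} I would introduce
\[
 g(s,x):=p(s,x)\,e^{-\alpha(s)u(s,x)},\qquad 0<s<q_\beta,
\]
where $p(s,\cdot)$ is the density of $X_s$. On $(0,q_\beta)$, $\alpha$ is smooth with $\alpha'=\rho_\beta\ge0$. Inserting $p=g\,e^{\alpha u}$ into the Fokker--Planck equation
\[
 \partial_sp=\tfrac{\beta^2}{2}p_{xx}-\beta^2\bigl(\alpha u_xp\bigr)_x
\]
and using the Parisi PDE to eliminate $u_s$ should give a linear parabolic equation for $g$. Its zeroth-order coefficient should involve $-\alpha' u$ together with terms built from $u_{xx}$ and $\alpha u_x^2$.

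Differentiating in $x$ then gives a linear parabolic equation for $h:=g_x$. It has bounded coefficients, a source term $-\rho_\beta\,u_x\,g$, and further terms proportional to $g$. The source is $\le 0$ for $x>0$, because $u(s,\cdot)$ is even and convex, so $u_x\ge0$ there. Oddness gives $h(s,0)=0$.

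The maximum principle on $(0,\infty)$ should then propagate $h\le0$ forward in time. Initially $X_s$ is close to a centred Gaussian, for which $g$ is decreasing in $|x|$. To justify this I would start from a small positive time, or approximate $\mu_\beta$ by finitely many atoms, where Cole--Hopf on each constant piece of $\alpha$ allows explicit checking, and then pass to the limit. Letting $s\uparrow q_\beta$ yields that $g(q_\beta-,\cdot)=p_\beta e^{-(1-c_\beta)u(q_\beta,\cdot)}$ is nonincreasing on $[0,\infty)$.

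\emph{Crossing the atom.} We have
\[
 f_\beta=g(q_\beta-,\cdot)\,e^{-c_\beta u(q_\beta,\cdot)}\propto g(q_\beta-,\cdot)\,(\sech x)^{c_\beta}.
\]
Since $c_\beta>0$ and $g(q_\beta-,\cdot)>0$, $f_\beta$ is a product of a positive nonincreasing function and a strictly decreasing one on $[0,\infty)$. It is therefore nonincreasing and in fact strictly decreasing, hence not constant.

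\emph{Main obstacle.} The main obstacle is the middle step. One must derive the PDE for $g$ and check that the terms multiplying $g$ in the equation for $h$ have a sign, or can be absorbed, so that the maximum principle applies with the source $-\rho_\beta u_xg\le0$. One must also handle the singular initial condition $X_0=0$. If the signs fail, I would first try the finite-atom approximation with Cole--Hopf, where $e^{-\alpha u}$ is a power of an explicit heat-semigroup quantity, and pass to the limit using uniqueness and continuity of the Parisi solution in $\mu$.
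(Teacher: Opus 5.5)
Your proposal is viable but takes a different route from the paper in the key step, and it leaves one step open: the start at $s=0$.

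\emph{The sign concern.} This is unnecessary. If you substitute $p=g\,e^{\alpha u}$ into the Fokker--Planck equation and use the Parisi PDE, every term involving $u_{xx}$ and $\alpha u_x^2$ cancels exactly. On $(0,q_\beta)$ this leaves
\[
 g_s=\frac{\beta^2}{2}g_{xx}-\rho_\beta\,u\,g
 \quad\text{and}\quad
 h_s=\frac{\beta^2}{2}h_{xx}-\rho_\beta\,u\,h-\rho_\beta\,u_x\,g .
\]
Since $u\geq0$ (it dominates the heat evolution of $\log\cosh$), the zeroth-order coefficient is bounded above. You also have $h(s,0)=0$, Gaussian decay of $h$, and a nonpositive source on $x>0$. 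So the half-line maximum principle propagates $h\leq0$ on $[s_0,T]$, $T<q_\beta$, from any time $s_0$ at which $h(s_0,\cdot)\leq0$ is already known.

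\emph{The step that does not close.} The start fails as written. Closeness of $X_{s_0}$ to a centred Gaussian says nothing about the sign of $h(s_0,x)$ for all $x>0$, because the drift has already acted on $(0,s_0)$. You need an approximation whose initial piece is exactly Gaussian. For instance, set $\alpha=0$ on $[0,\varepsilon)$ and place an atom of size $\alpha(\varepsilon)$ at $\varepsilon$; its multiplier is symmetric decreasing because the modified Parisi solution is still even and convex. You then need a stability argument for the law of $X_{q_\beta}$ as $\varepsilon\to0$. Continuity of the Parisi solution in $\mu$ does not give this on its own; it needs Girsanov or a Brownian-bridge representation.

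\emph{Comparison with the paper.} The paper makes the approximation global and so avoids the maximum principle altogether. It replaces $\mu_\beta|_{[0,q_\beta)}$ by finitely many atoms while keeping $c_\beta\delta_{q_\beta}$. Between atoms the transformed density solves the pure heat equation (your equation with $\rho_\beta=0$), which preserves symmetric decreasing functions by a layer-cake argument. At each atom of size $d$ it is multiplied by the symmetric decreasing factor $e^{-du}$. The bridge representation then gives locally uniform convergence $f_n\to f_\beta$, together with continuity and strict positivity of $f_\beta$. That route needs neither the smoothness of $\rho_\beta$ nor parabolic regularity of $p$ on an unbounded domain.

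Your route, once started, gives slightly more. Crossing the atom with $c_\beta>0$ yields strict decrease of $f_\beta$ on $[0,\infty)$. The paper obtains nonconstancy only from positivity plus integrability.

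\emph{A minor point.} Girsanov alone gives positivity of $p_\beta$ but not its continuity. Continuity needs parabolic regularity or the bridge formula.
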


\begin{proof}
See Appendix~\ref{app:ft-endpoint-density}.
\end{proof}

We shall use the following elementary consequence twice. If $f:[0,\infty)\to(0,\infty)$ is continuous, nonconstant, and nonincreasing, $g:[0,\infty)\to\R$ is bounded and strictly decreasing, and $\nu$ is a probability measure assigning positive mass to every nonempty open interval, then
\begin{equation}
 \frac{\E_\nu[f g]}{\E_\nu f}>\E_\nu g.
 \label{eq:ft-strict-covariance}
\end{equation}
Indeed, the difference after multiplication by $\E_\nu f$ is $\operatorname{Cov}_\nu(f,g)$, and
\begin{equation*}
 2\operatorname{Cov}_\nu(f,g)
 =\iint[f(x)-f(y)][g(x)-g(y)]\,\nu(\mathrm d x)\nu(\mathrm d y)>0.
\end{equation*}
The strict inequality follows because continuity and nonconstancy of $f$ give nonempty open intervals $I<J$ on which $f(x)>f(y)$ for every $(x,y)\in I\times J$, while strict decrease of $g$ gives $g(x)>g(y)$ there and $\nu(I)\nu(J)>0$.

\begin{proposition}[Endpoint atom and endpoint scale]
\label{prop:ft-endpoint-quantitative}
For every $\beta>1$,
\begin{equation}
 c_\beta>\frac13
 \quad\text{and}\quad
 \frac{3-c_\beta}{2\beta^2}<1-q_\beta<\frac2{\beta^2}.
 \label{eq:ft-endpoint-quantitative}
\end{equation}
\end{proposition}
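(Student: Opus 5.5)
We work directly from the moment identities of Proposition~\ref{prop:ft-endpoint-identities}, with $C:=C_\beta=\sech^2(X_{q_\beta})\in(0,1]$, and compare the law of $X_{q_\beta}$ with a tilted law via Lemma~\ref{lem:ft-endpoint-density}. By \eqref{eq:ft-terminal-value-at-q} we have $e^{u(q_\beta,x)}=\kappa\cosh x$ for a constant $\kappa>0$. Hence $p_\beta(x)=\kappa f_\beta(x)\cosh x$, and for any function $h$,
\begin{equation*}
 \E h(X_{q_\beta})=\kappa\int_\R h(x)f_\beta(x)\cosh x\dd x .
\end{equation*}
Since $C^k\cosh x=\sech^{2k-1}x$, every moment $\E C^k$ with $k\ge1$ is an integral of $f_\beta$ against $\sech^{2k-1}$. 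The $f_\beta$-weights can therefore be read as expectations under the probability measures $\nu_k(\mathrm dx)\propto\sech^{2k-1}(x)\dd x$ on $[0,\infty)$, using evenness of $f_\beta$. These measures charge every open interval.

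\textbf{Upper bound on $1-q_\beta$.} We need $\E C<2\E C^2$, since $\beta^2\E C^2=1$. Write
\begin{equation*}
 \E C^2=\kappa\int f_\beta\sech^3 \quad\text{and}\quad \E C=\kappa\int f_\beta\sech .
\end{equation*}
Put $Z_k:=\int_0^\infty\sech^{2k-1}$, so $Z_1=\pi/2$, $Z_2=\pi/4$, $Z_3=3\pi/16$. Then
\begin{equation*}
 \frac{\E C^2}{\E C}=\frac{Z_2\,\E_{\nu_2}f}{Z_1\,\E_{\nu_1}f}=\frac12\cdot\frac{\E_{\nu_1}[f\,\sech^2]}{\E_{\nu_1}f}\cdot\frac{Z_1}{Z_2}\cdot\frac12 .
\end{equation*}
More cleanly, $\E_{\nu_2}f=\E_{\nu_1}[f g]/\E_{\nu_1}g$ with $g=\sech^2$, which is bounded and strictly decreasing on $[0,\infty)$. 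Applying \eqref{eq:ft-strict-covariance} under $\nu_1$ gives $\E_{\nu_1}[fg]>\E_{\nu_1}f\,\E_{\nu_1}g$, hence $\E_{\nu_2}f>\E_{\nu_1}f$. Therefore
\begin{equation*}
 \frac{\E C^2}{\E C}>\frac{Z_2}{Z_1}=\frac12 ,
\end{equation*}
which is $1-q_\beta=\E C<2\E C^2=2/\beta^2$.

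\textbf{Bound on $c_\beta$.} The same argument under $\nu_2$ with $g=\sech^2$ gives
\begin{equation*}
 \frac{\E C^3}{\E C^2}>\frac{Z_3}{Z_2}=\frac34 .
\end{equation*}
Combined with \eqref{eq:ft-endpoint-atom-ratio}, this yields $2/(3-c_\beta)>3/4$, i.e.\ $c_\beta>1/3$.

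\textbf{Lower bound on $1-q_\beta$.} We need $\E C>(3-c_\beta)/(2\beta^2)=\E C^2\cdot(3-c_\beta)/2=(\E C^2)^2/\E C^3$, using \eqref{eq:ft-endpoint-atom-ratio}. That is, we need
\begin{equation*}
 \E C\,\E C^3>(\E C^2)^2 .
\end{equation*}
This is Cauchy--Schwarz, $(\E C^2)^2=(\E[C^{1/2}C^{3/2}])^2\le\E C\,\E C^3$. Equality would force $C$ to be a.s.\ constant, which is impossible because $X_{q_\beta}$ has a positive density, so $C$ is nondegenerate. The consequences stated in Theorem~\ref{thm:finite-temperature} then follow: $c_\beta<1$ gives $(3-c_\beta)/2>1$, so $1/\beta^2<1-q_\beta<2/\beta^2$, and $q_\beta\to1$.

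The main obstacle is already handled by Lemma~\ref{lem:ft-endpoint-density}, which is taken as given. The only delicate step in the plan is the change of measure: we must check that $e^{u(q_\beta,\cdot)}\propto\cosh$ turns the moments of $C$ into $\nu_k$-averages of $f_\beta$, and get the Beta-type constants $Z_k$ right. These constants come from the reduction $\int_0^\infty\sech^{n}=\frac{n-2}{n-1}\int_0^\infty\sech^{n-2}$.
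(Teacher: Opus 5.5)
Your proposal is correct and follows the paper's proof: the same two applications of the strict covariance inequality, under the $\sech$- and $\sech^3$-weighted measures with $g=\sech^2$, give $\E C_\beta^2/\E C_\beta>1/2$ and $\E C_\beta^3/\E C_\beta^2>3/4$, and strict Cauchy--Schwarz gives the lower bound on $1-q_\beta$. The only slip is your first displayed chain in the upper-bound paragraph, whose constant prefactor is off by a factor of $2$ (the ratio equals $\E_{\nu_1}[f\sech^2]/\E_{\nu_1}f$ exactly), but the ``more cleanly'' version that follows it is correct and is all you need.
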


\begin{proof}
Constants in \eqref{eq:ft-terminal-value-at-q} cancel from moment ratios, and therefore
\begin{equation}
 \frac{\E C_\beta^3}{\E C_\beta^2}
 \stackref{=}{\eqref{eq:ft-endpoint-f},\,\eqref{eq:ft-terminal-value-at-q}}
 \frac{\int_0^\infty f_\beta(x)\sech^5x\dd x}
        {\int_0^\infty f_\beta(x)\sech^3x\dd x}.
 \label{eq:ft-endpoint-f-ratio}
\end{equation}
Apply \eqref{eq:ft-strict-covariance} under the probability measure with density proportional to $\sech^3x$ on $[0,\infty)$, taking $g(x):=\sech^2x$.  This gives
\begin{equation*}
 \frac{\E C_\beta^3}{\E C_\beta^2}
 \stackref{>}{\eqref{eq:ft-strict-covariance},\,\eqref{eq:ft-endpoint-f-ratio}}
 \frac{\int_0^\infty\sech^5x\dd x}
         {\int_0^\infty\sech^3x\dd x}
 =\frac34.
\end{equation*}
Thus
\begin{equation*}
 \frac2{3-c_\beta}
 \stackref{=}{\eqref{eq:ft-endpoint-atom-ratio}}
 \frac{\E C_\beta^3}{\E C_\beta^2}>\frac34,
\end{equation*}
which proves $c_\beta>1/3$.

The same covariance inequality under the probability measure with density proportional to $\sech x$ gives
\begin{equation*}
 \frac{\E C_\beta^2}{\E C_\beta}
 \stackref{>}{\eqref{eq:ft-strict-covariance}}
 \frac{\int_0^\infty\sech^3x\dd x}
         {\int_0^\infty\sech x\dd x}
 =\frac12.
\end{equation*}
Consequently,
\begin{equation*}
 1-q_\beta
 \stackref{=}{\eqref{eq:ft-endpoint-first-two}}
 \frac1{\beta^2}\frac{\E C_\beta}{\E C_\beta^2}
 <\frac2{\beta^2}.
\end{equation*}
Finally, Lemma~\ref{lem:ft-endpoint-density} gives $p_\beta(x)=e^{u(q_\beta,x)}f_\beta(x)>0$ for every $x\in\R$. Hence $X_{q_\beta}$ is nondegenerate and $C_\beta=\sech^2(X_{q_\beta})$ is positive and nonconstant. The strict Cauchy--Schwarz inequality applied to $C_\beta^{1/2}$ and $C_\beta^{3/2}$ gives $(\E C_\beta^2)^2<\E C_\beta\,\E C_\beta^3$; equality would force $C_\beta^{1/2}$ and $C_\beta^{3/2}$ to be proportional almost surely, and hence would force $C_\beta$ to be constant. Dividing the strict inequality by $\E C_\beta\,\E C_\beta^2>0$ gives
\begin{equation*}
 \frac{\E C_\beta^2}{\E C_\beta}
 <\frac{\E C_\beta^3}{\E C_\beta^2}
 \stackref{=}{\eqref{eq:ft-endpoint-atom-ratio}}
 \frac2{3-c_\beta}.
\end{equation*}
Therefore
\begin{equation*}
 1-q_\beta
 \stackref{=}{\eqref{eq:ft-endpoint-first-two}}
 \frac1{\beta^2}\frac{\E C_\beta}{\E C_\beta^2}
 >\frac{3-c_\beta}{2\beta^2},
\end{equation*}
which is the lower bound in \eqref{eq:ft-endpoint-quantitative}.
\end{proof}

\begin{proposition}[Endpoint laws and subsequential limits]
\label{prop:ft-endpoint-laws}
Define a probability measure on $[0,\infty)$ by
\begin{equation}
 \eta_\beta(\mathrm d x)
 :=\frac{f_\beta(x)\sech^3x\dd x}
        {\int_0^\infty f_\beta(y)\sech^3y\dd y}.
 \label{eq:ft-endpoint-law}
\end{equation}
Equivalently, $\eta_\beta$ is the law of $|X_{q_\beta}|$ after tilting its law by $C_\beta^2$.  Then $(\eta_\beta)_{\beta>1}$ is tight and
\begin{equation}
 c_\beta=3-\frac2{r_\beta}
 \quad\text{and}\quad
 r_\beta:=\int_0^\infty\sech^2x\,\eta_\beta(\mathrm d x).
 \label{eq:ft-c-endpoint-law}
\end{equation}
Consequently, for every sequence $(\beta_n)$ satisfying $\lim_{n\to\infty}\beta_n=\infty$, there exist a subsequence, not relabeled, and a probability measure $\eta_\infty$ on $[0,\infty)$ such that $\eta_{\beta_n}$ converges weakly to $\eta_\infty$.  Equivalently,
\begin{equation}
 \lim_{n\to\infty}
 \int_0^\infty\varphi(x)\,\eta_{\beta_n}(\mathrm d x)
 =
 \int_0^\infty\varphi(x)\,\eta_\infty(\mathrm d x)
 \qquad\text{for every bounded continuous }
 \varphi:[0,\infty)\to\R.
 \label{eq:ft-eta-subsequential-limit}
\end{equation}
Moreover,
\begin{equation}
 \lim_{n\to\infty}c_{\beta_n}
 =
 3-\frac2{\int_0^\infty\sech^2x\,\eta_\infty(\mathrm d x)}
 \in\left[\frac13,1\right].
 \label{eq:ft-c-subsequential-limit}
\end{equation}
In particular, $c_\beta$ converges if and only if the $\sech^2$-moment has the same value for every subsequential weak limit of $\eta_\beta$. Uniqueness of the weak limit of $\eta_\beta$ is sufficient.
\end{proposition}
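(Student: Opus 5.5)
The plan has three parts: the identity \eqref{eq:ft-c-endpoint-law}, tightness of $(\eta_\beta)$, and passage to the limit in $c_\beta$.

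\textbf{The identity.} Rewrite \eqref{eq:ft-endpoint-atom-ratio} in terms of $\eta_\beta$. The law of $X_{q_\beta}$ has density $p_\beta=e^{u(q_\beta,\cdot)}f_\beta$ by \eqref{eq:ft-endpoint-f}. By \eqref{eq:ft-terminal-value-at-q}, $e^{u(q_\beta,x)}$ equals $\cosh x$ times a constant, and $p_\beta$ is even by Lemma~\ref{lem:ft-endpoint-density}. Hence for any bounded measurable $\varphi$ on $[0,\infty)$,
\begin{equation*}
\frac{\E[C_\beta^2\varphi(|X_{q_\beta}|)]}{\E C_\beta^2}=\frac{\int_0^\infty f_\beta(x)\cosh x\,\sech^4x\,\varphi(x)\dd x}{\int_0^\infty f_\beta(x)\cosh x\,\sech^4 x\dd x}=\int_0^\infty\varphi\,\dd\eta_\beta .
\end{equation*}
This proves the tilting description. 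Taking $\varphi=\sech^2$ gives $r_\beta=\E C_\beta^3/\E C_\beta^2=2/(3-c_\beta)$ by \eqref{eq:ft-endpoint-atom-ratio}, which rearranges to $c_\beta=3-2/r_\beta$.

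\textbf{Tightness.} This is the only nontrivial step. I would exploit that $f_\beta$ is nonincreasing on $[0,\infty)$ (Lemma~\ref{lem:ft-endpoint-density}). For $x\ge R$ we have $f_\beta(x)\le f_\beta(y)$ for every $y\in[0,1]$, so $f_\beta(x)\le\int_0^1 f_\beta(y)\dd y$. Also $\sech^3 y\ge\sech^3 1$ on $[0,1]$, which gives
\begin{equation*}
\int_0^\infty f_\beta\sech^3\ge \sech^3(1)\int_0^1 f_\beta .
\end{equation*}
Combining the two bounds,
\begin{equation*}
\eta_\beta([R,\infty))\le\frac{\int_0^1 f_\beta\cdot\int_R^\infty\sech^3x\dd x}{\sech^3(1)\int_0^1 f_\beta}\le \frac{C}{\cosh^{3}1}\,e^{-3R},
\end{equation*}
uniformly in $\beta$. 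This gives tightness; Prokhorov then yields the subsequential weak limit $\eta_\infty$ and \eqref{eq:ft-eta-subsequential-limit}.

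\textbf{The limit of $c_\beta$.} Since $\sech^2$ is bounded and continuous, $r_{\beta_n}\to r_\infty:=\int\sech^2\dd\eta_\infty$. Moreover $r_\infty>0$, because $\eta_\infty$ is a probability measure and $\sech^2>0$. So $c_{\beta_n}\to 3-2/r_\infty$ by continuity of $r\mapsto 3-2/r$ on $(0,\infty)$.

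The value lies in $[1/3,1]$ for two reasons. First, $c_\beta\in(1/3,1)$ by Proposition~\ref{prop:ft-endpoint-quantitative} and Theorem~\ref{thm:finite-temperature-structure}, and limits preserve non-strict inequalities. Second, directly, $r_\beta\in(3/4,1)$ forces $r_\infty\in[3/4,1]$.

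The final sentence of the statement then follows. By compactness, every subsequence of $c_\beta$ has a further subsequence converging to $3-2/r_\infty$ for some subsequential limit $\eta_\infty$. So $c_\beta$ converges iff all these values coincide, which is in turn equivalent to all $\sech^2$-moments of subsequential limits coinciding, since $r\mapsto3-2/r$ is injective. Uniqueness of the weak limit of $\eta_\beta$ trivially suffices.
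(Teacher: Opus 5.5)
Your proposal is correct and follows essentially the paper's route: the identity comes from rewriting $\E C_\beta^3/\E C_\beta^2$ through $p_\beta=e^{u(q_\beta,\cdot)}f_\beta$ and \eqref{eq:ft-endpoint-atom-ratio}. Tightness uses only the monotonicity of $f_\beta$; the paper compares against $f_\beta(R)$ to get $\eta_\beta([R,\infty))\le\int_R^\infty\sech^3x\dd x/\int_0^R\sech^3x\dd x$ for every $R>0$, whereas your comparison against $\int_0^1 f_\beta$ needs $R\ge1$. The limit then follows from Prokhorov and $3/4<r_\beta<1$, as in the paper.

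One harmless slip: dividing by $\sech^3(1)$ multiplies by $\cosh^3 1$, so your tail bound should read $C\cosh^3(1)\,e^{-3R}$ rather than $Ce^{-3R}/\cosh^3 1$. This does not affect uniformity in $\beta$.
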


\begin{proof}
Equations \eqref{eq:ft-endpoint-f-ratio} and \eqref{eq:ft-endpoint-atom-ratio} give
\begin{equation*}
 r_\beta
 \stackref{=}{\eqref{eq:ft-endpoint-law},\,\eqref{eq:ft-endpoint-f-ratio}}
 \frac{\E C_\beta^3}{\E C_\beta^2}
 \stackref{=}{\eqref{eq:ft-endpoint-atom-ratio}}
 \frac2{3-c_\beta},
\end{equation*}
which proves \eqref{eq:ft-c-endpoint-law}.  For $R>0$, monotonicity of $f_\beta$ gives
\begin{equation}
 \eta_\beta([R,\infty))
 \leq
 \frac{\int_R^\infty\sech^3x\dd x}
      {\int_0^R\sech^3x\dd x}.
 \label{eq:ft-endpoint-law-tail}
\end{equation}
Indeed, the numerator in \eqref{eq:ft-endpoint-law} is at most $f_\beta(R)\int_R^\infty\sech^3x\dd x$, whereas its denominator is at least $f_\beta(R)\int_0^R\sech^3x\dd x$.  The right side tends to zero as $R\to\infty$, uniformly in $\beta$, proving tightness.

Prokhorov's theorem supplies a subsequence and a probability measure $\eta_\infty$ satisfying \eqref{eq:ft-eta-subsequential-limit}.  Since $\sech^2$ is bounded and continuous, its moments converge along that subsequence, and \eqref{eq:ft-c-endpoint-law} gives the claimed limit. Proposition~\ref{prop:ft-endpoint-quantitative} implies $3/4<r_\beta<1$, so the limiting value lies in $[1/3,1]$.  The final equivalence follows from \eqref{eq:ft-c-endpoint-law} and compactness of the family of endpoint laws.
\end{proof}

\begin{proof}[Proof of Theorem~\ref{thm:finite-temperature}]
This is Proposition~\ref{prop:ft-endpoint-quantitative}.
\end{proof}

\section{Zero-temperature variational structure}
\label{sec:zero-temperature-structure}

\subsection{The variational problem}
\label{sec:zt-variational}

Theorems~\ref{thm:finite-temperature-structure} and~\ref{thm:finite-temperature} do not identify the zero-temperature order parameter: the positive-temperature probability measures concentrate at the endpoint one, whereas their interior structure survives after multiplication by the inverse temperature. We therefore turn to the zero-temperature variational problem.

The class $\cU$, the PDE solution $u_\gamma$, and the functional $\cP$ are defined in \eqref{eq:intro-zero-class}--\eqref{eq:intro-zero-functional}. Let $\gamma_\star$ be the unique minimizer introduced in Section~\ref{sec:introduction}. From now on, write
\begin{equation}
 \gamma:=\gamma_\star,\qquad \nu:=\mathrm d\gamma
 \quad\text{and}\quad S:=\supp_{[0,1)}\nu.
 \label{eq:zt-minimizer-notation}
\end{equation}

For $\alpha\in\cU$ and a convex one-Lipschitz function $g:\R\to\R$, let $u^{g,\alpha}$ denote the solution of
\begin{equation}
 \begin{aligned}
 \partial_tu^{g,\alpha}(t,x)+\frac12\left(u_{xx}^{g,\alpha}(t,x)+\alpha(t)u_x^{g,\alpha}(t,x)^2\right)&=0,\qquad &&\forall (t,x)\in[0,1)\times\R,\\
 u^{g,\alpha}(1,x)&=g(x), &&\forall x\in\R.
 \end{aligned}
 \label{eq:zt-general-PDE}
\end{equation}
The unadorned function $u$ will always mean the solution with coefficient $\gamma$ and terminal datum $|\cdot|$:
\begin{equation*}
 u:=u^{|\cdot|,\gamma}=u_\gamma.
\end{equation*}
For its spatial derivatives, define
\begin{equation}
 \bigl(\mathsf B(t,x),\mathsf C(t,x),\mathsf D(t,x)\bigr)
 :=\bigl(u_x(t,x),u_{xx}(t,x),u_{xxx}(t,x)\bigr),
 \qquad \forall (t,x)\in[0,1)\times\R.
 \label{eq:zt-BCD-shorthand}
\end{equation}

We first record the regularity properties used below. They follow by approximating $|x|$ with $\lambda^{-1}\log\cosh(\lambda x)$, applying the Cole--Hopf and stochastic-control representations, and then passing to the limit; see \cite[Proposition 2 and Theorem 5]{ChenHandschyLerman} and \cite[Propositions 2 and 3]{AuffingerChenZeng}.

\begin{lemma}[PDE and diffusion facts]
\label{lem:zt-PDE-facts}
Let $u:=u^{|\cdot|,\gamma}$ be the solution of \eqref{eq:zt-general-PDE}. For every $T<1$, the function $u$ is continuous in $(t,x)$ and smooth in $x$ on $[0,T]\times\R$.  It is even, convex, and one-Lipschitz in $x$, with
\begin{equation*}
 -1<u_x(t,x)<1,\qquad u_{xx}(t,x)>0
 \quad\text{and}\quad \lim_{x\to\pm\infty}u_x(t,x)=\pm1.
\end{equation*}
The stochastic differential equation
\begin{equation}
 \dd X_t=\gamma(t)u_x(t,X_t)\dd t+\dd W_t
 \quad\text{and}\quad X_0=0,
 \label{eq:zt-optimal-SDE}
\end{equation}
has a unique weak solution. For every fixed $t>0$, $X_t$ has a smooth, positive, and even density, denoted by
\begin{equation}
 \rho_t(x):=\frac{\mathrm d\mathcal L(X_t)}{\mathrm d x}(x),
 \qquad \forall x\in\R.
 \label{eq:zt-rho-density}
\end{equation}
\end{lemma}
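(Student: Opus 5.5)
The plan is to obtain every property of $u=u^{|\cdot|,\gamma}$ as a limit of the corresponding properties for smooth approximations, and then to deduce the diffusion facts from the regularity of the drift. Fix $T<1$. First replace $\gamma$ by step functions $\gamma_n\in\cU$ that are bounded on $[0,1)$ and converge to $\gamma$ in $L^1$ on $[0,T]$. Replace the terminal datum by $g_\lambda(x):=\lambda^{-1}\log\cosh(\lambda x)$, which is smooth, even, strictly convex, and one-Lipschitz, with $g_\lambda\to|x|$ uniformly as $\lambda\to\infty$.

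For a step coefficient and smooth datum, $u^{g_\lambda,\gamma_n}$ is given explicitly by iterated Cole--Hopf transforms:
\begin{equation*}
 u(t,x)=\frac1{a}\log\E\exp\bigl(a\,u(t',x+W_{t'}-W_t)\bigr)
\end{equation*}
on each interval where $\gamma_n\equiv a>0$, and by the heat semigroup where $a=0$. Each step of this recursion preserves several properties. Evenness is clear. The Lipschitz bound $|u_x|\le1$ holds because $u_x$ is a tilted average of the previous $u_x$. Convexity follows from Hölder's inequality, since $\log\E e^{a\phi(x+W)}$ is convex in $x$. Differentiating twice gives
\begin{equation*}
 u_{xx}=\text{(tilted mean of }u_{xx})+a\,\text{(tilted variance of }u_x),
\end{equation*}
so strict positivity of $u_{xx}$ propagates; it holds at time $1$ because $g_\lambda''>0$. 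The strict bound $|u_x|<1$ then follows from $u_x=\tilde\E[g_\lambda'(\cdots)]$ together with $|g_\lambda'|<1$. Finally, $u_x\to\pm1$ as $x\to\pm\infty$, because the Gaussian shift combined with the one-sided limits of $g'$ persists under tilting by weights whose Lipschitz exponent is bounded by $a\le\gamma(T)$.

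Next, pass to the limit. The stability estimate $\|u^{g,\alpha}-u^{h,\beta}\|_\infty\le\|g-h\|_\infty+C\int|\alpha-\beta|$ (see \cite[Proposition 2]{ChenHandschyLerman}) gives uniform convergence to $u$ on $[0,T]\times\R$. Smoothing by the heat kernel gives uniform bounds on every derivative $\partial_x^k u$ over $[t_0,T]$ when $t_0<T$, uniformly in $n$ and $\lambda$. I will apply this with the problem restarted at a time $T'\in(T,1)$, where $u(T',\cdot)$ is already smooth; that choice removes the dependence on $\lambda$. Convergence of all $x$-derivatives then follows by Arzelà--Ascoli. Evenness, convexity, and $|u_x|\le1$ pass to the limit directly, but the strict inequalities need a separate argument, and I expect this to be the main obstacle. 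For it I will rerun the representation formulas at the level of the limit $u$ on $[t,T']$. With $\gamma$ bounded by $\gamma(T')$, the Cole--Hopf-type identity
\begin{equation*}
 u_{xx}(t,x)\ge \E[\text{weight}\cdot u_{xx}(T',\cdot)]
\end{equation*}
holds with a strictly positive Gaussian weight. Since $u_{xx}(T',\cdot)\ge0$ is not identically zero (because $u_x$ runs from $-1$ to $1$), this yields $u_{xx}>0$ everywhere, and $|u_x|<1$ follows. The limits $u_x\to\pm1$ follow from convexity and one-Lipschitzness together with $u(t,x)\ge|x|$, which itself holds by Jensen's inequality in the control representation \cite[Theorem 5]{ChenHandschyLerman}.

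For the SDE, on $[0,T]$ the drift $\gamma(t)u_x(t,x)$ is bounded by $\gamma(T)$ and Lipschitz in $x$, because $u_{xx}$ is bounded there. This gives pathwise, hence weak, existence and uniqueness on every $[0,T]$, and these solutions patch consistently. Evenness of the law of $X_t$ follows from uniqueness, since $u_x$ is odd and so $-X$ solves the same equation. For fixed $0<t<1$, Girsanov's theorem with bounded drift shows the law of $X_t$ is equivalent to $N(0,t)$, which gives a positive density. Smoothness of the density follows from Hörmander's theorem (or from the Kolmogorov forward equation, whose drift is smooth in $x$ and measurable in $t$), matching \cite[Propositions 2 and 3]{AuffingerChenZeng}.
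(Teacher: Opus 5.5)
The one step that fails as written is the pointwise positivity of $\rho_t$. Girsanov's theorem shows that $\mathcal L(X_t)$ and $N(0,t)$ are mutually absolutely continuous. That only gives a density which is positive Lebesgue-almost everywhere: a smooth density equivalent to a Gaussian can still vanish at isolated points, for instance one proportional to $x^2e^{-x^2/2}$. The lemma asserts $\rho_t(x)>0$ for every $x$, and the paper relies on this later, since $Q=\rho_te^{-\gamma u}$ is divided by and $\log Q$ is differentiated.

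The repair is to condition the Girsanov weight on the endpoint. Let $\mathcal E_t$ be the exponential martingale of $\gamma u_x(s,B_s)$. It\^o's formula for $u(s,B_s)$ (first for step $\gamma$, then in the limit) and Stieltjes integration by parts give $\log\mathcal E_t=\gamma(t)u(t,B_t)-\int_{[0,t]}u(s,B_s)\,\mathrm d\gamma(s)$. Hence $\rho_t(x)=p_t(x)e^{\gamma(t)u(t,x)}\E^{\mathrm{BB},t}_{0\to x}\exp\{-\int_{[0,t]}u(s,B_s)\,\mathrm d\gamma(s)\}$, which is the content of Lemma~\ref{lem:zt-bridge-formula}. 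This bridge expectation of a strictly positive functional is positive at every $x$, and it is at most one because $u\geq0$. Writing $B_s=(s/t)x+\widetilde B_s$ and differentiating under the expectation, using the bounded $x$-derivatives of $u$ on $[0,t]$, also gives smoothness and evenness directly.

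This is preferable to invoking H\"ormander's theorem. Its usual time-inhomogeneous form needs coefficients smooth in $t$, which fails when $\gamma$ jumps. A one-dimensional Malliavin argument that only needs $x$-smoothness uniformly in $t$ would be acceptable, as would a parabolic Harnack argument for the forward equation.

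Apart from this, your route is the one the paper indicates. The paper gives no proof beyond citing the $\lambda^{-1}\log\cosh(\lambda x)$ regularization, the Cole--Hopf and control representations, and the passage to the limit. The steps you supply are sound, including strict convexity via $u_{xx}(t,x)\geq\E\,u_{xx}(T',Y_{T'})$ and the limits $u_x\to\pm1$ via $u\geq|x|$.

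Two small corrections:
\begin{enumerate}
\item The step approximations must converge to $\gamma$ in $L^1(0,1)$, not only on $[0,T]$, because the stability estimate charges the coefficient discrepancy all the way to the terminal time.
\item The tilt parameters in the cascade on $[T,1)$ are not bounded by $\gamma(T)$. This is harmless, since you later rederive $u_x\to\pm1$ from $u\geq|x|$.
\end{enumerate}
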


Define the consistency function and the obstacle
\begin{equation}
 \Gamma(t):=\E\,u_x(t,X_t)^2,\qquad h(t):=\Gamma(t)-t
 \quad\text{and}\quad G(q):=\int_q^1h(t)\dd t.
 \label{eq:zt-Gamma-h-G}
\end{equation}
The directional derivative formula of \cite[Propositions 3 and 4]{ChenHandschyLerman} is
\begin{equation}
 D\cP(\gamma)[\eta-\gamma]
 =\frac12\int_0^1(\eta(t)-\gamma(t))h(t)\dd t,
 \qquad \eta\in\cU.
 \label{eq:zt-directional-derivative}
\end{equation}

\begin{proposition}[Variational conditions]
\label{prop:zt-variational-conditions}
Let $u:=u^{|\cdot|,\gamma}$ solve \eqref{eq:zt-general-PDE}, let $X$ solve \eqref{eq:zt-optimal-SDE}, and let $\Gamma,h,G$ be defined by \eqref{eq:zt-Gamma-h-G}. Then the functions in \eqref{eq:zt-Gamma-h-G} satisfy
\begin{equation}
 G(q)\geq0,\qquad \forall q\in[0,1),
 \quad\text{and}\quad
 G(q)=0,\qquad \forall q\in S.
 \label{eq:zt-G-nonnegative}
\end{equation}
At every $q\in S$,
\begin{equation}
 \Gamma(q)=q\quad\text{and}\quad
 \E\,u_{xx}(q,X_q)^2\leq1.
 \label{eq:zt-consistency-stability}
\end{equation}
If $0\leq a<b<1$ and $\gamma$ is constant on $(a,b)$, then
\begin{equation}
 \Gamma'(a+)=\E u_{xx}(a,X_a)^2
 \quad\text{and}\quad
 \Gamma'(b-)=\E u_{xx}(b,X_b)^2.
 \label{eq:zt-gap-endpoint-derivatives}
\end{equation}
Moreover,
\begin{equation*}
 0\in S.
\end{equation*}
\end{proposition}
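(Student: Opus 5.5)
The plan has four steps: first-order optimality, local consequences at support points, second-order control through It\^o's formula, and the cited fact $0\in S$.

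\textbf{Step 1: first-order conditions.} We use the directional derivative formula \eqref{eq:zt-directional-derivative} together with minimality of $\gamma$ in the convex set $\cU$. For every $\eta\in\cU$ this gives
\begin{equation*}
 \int_0^1(\eta-\gamma)h\dd t\ge0 .
\end{equation*}
Two families of perturbations produce \eqref{eq:zt-G-nonnegative}.
\begin{itemize}
\item Taking $\eta:=\gamma+\1_{[q,1)}$, which is nondecreasing, right-continuous and integrable, gives $G(q)\ge0$ for every $q\in[0,1)$.
\item To decrease $\gamma$ near a support point, we rewrite the condition through Fubini: $\int_0^1\gamma(t)h(t)\dd t=\int_{[0,1)}G(s)\,\nu(\mathrm d s)$, with the convention $\gamma(0-)=0$. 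Then $\eta=\gamma+\epsilon\,\mathrm d$-perturbation with $\eta:=\gamma\pm\epsilon\theta$ is admissible when $\mathrm d\theta$ is a small multiple of $\nu$. Concretely, taking $\mathrm d\eta=(1+\epsilon\phi)\nu$ with bounded $\phi$ and $|\epsilon|$ small keeps $\eta\in\cU$. Both signs are then allowed, so $\int G\phi\,\mathrm d\nu=0$ for all bounded $\phi$.
\end{itemize}
Hence $G=0$ $\nu$-a.e. Since $G$ is continuous (because $h$ is bounded near $t<1$ and integrable), this upgrades to $G=0$ on $S$.

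\textbf{Step 2: first consequences at support points.} At $q\in S$ the function $G\ge0$ attains its minimum value $0$. For $q\in(0,1)$ it is an interior minimum of a $C^1$ function, since $\Gamma$ is continuous by Lemma~\ref{lem:zt-PDE-facts}. Thus $G'(q)=-h(q)=0$, i.e.\ $\Gamma(q)=q$. At $q=0$ we get $h(0)=\Gamma(0)=u_x(0,0)^2=0$ directly by evenness, so the identity also holds there.

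\textbf{Step 3: It\^o formula and the second-order bound.} Next we compute $\Gamma'$ by It\^o's formula along \eqref{eq:zt-optimal-SDE}, exactly as in Proposition~\ref{prop:ft-endpoint-identities}. Differentiating the PDE in $x$ gives
\begin{equation*}
 \partial_t\mathsf B+\tfrac12\mathsf B_{xx}+\gamma\mathsf B\mathsf C=0 .
\end{equation*}
Then $\dd\,\mathsf B(t,X_t)=\mathsf C\dd W_t$, so $\Gamma(t)-\Gamma(s)=\int_s^t\E\mathsf C(r,X_r)^2\dd r$. The martingale term is a true martingale because $\mathsf C$ is bounded on $[0,T]\times\R$ for $T<1$; I would cite \cite{AuffingerChenZeng}, or derive this from the Cole--Hopf representation with smoothed data. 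Since $r\mapsto\E\mathsf C(r,X_r)^2$ is continuous, this yields \eqref{eq:zt-gap-endpoint-derivatives} on gaps, including the right limits at $a$ and left limits at $b$. Indeed the formula $\Gamma'(t)=\E\mathsf C(t,X_t)^2$ holds for all $t<1$, with no gap assumption needed.

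For the stability bound at $q\in S$, $h$ is $C^1$, $h(q)=0$, and $G\ge0$ with $G(q)=0$. Locally $G(t)=G(q)-\int_q^th=-\tfrac12h'(q)(t-q)^2+o((t-q)^2)$, so minimality forces $h'(q)\le0$, i.e.\ $\E\mathsf C(q,X_q)^2\le1$. At $q=0$ this argument only uses the one-sided expansion on $t>0$. There the minimality only gives information from the right, and it still yields $-\tfrac12h'(0)t^2\ge0$, hence the same inequality.

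\textbf{Step 4: $0\in S$.} The claim $0\in S$ is the cited fact from \cite{ChenHandschyLerman}, which we invoke directly. The main obstacle is Step 1's two-sided perturbation, in particular ensuring admissibility when $\nu$ has infinite mass near $1$; the weights $(1-s)$ and integrability of $\gamma$ handle this.
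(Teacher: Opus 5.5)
Your proposal is correct. For the consistency and stability conditions it takes a genuinely different route from the paper.

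For \eqref{eq:zt-G-nonnegative} you argue essentially as the paper does. The paper's two-sided test uses only $\eta\in\{0,2\gamma\}$ together with $G\geq0$, which is a special case of your $\mathrm d\eta=(1+\epsilon\phi)\nu$.

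The paper then obtains $\Gamma(q)=q$ and $\E u_{xx}(q,X_q)^2\le1$ by citing \cite[Proposition~3]{ChenHandschyLerman}. It remarks that at $q=0$ a one-sided minimum of $G$ alone would not give stability. You instead derive both conditions from $G\geq0$, $G|_S=0$ and $G\in C^2([0,1))$ by first- and second-order Taylor expansion. You handle $q=0$ by supplying the missing first-order information $G'(0)=-h(0)=-u_x(0,0)^2=0$ from evenness. After that, the one-sided expansion $G(t)=-\tfrac12h'(0)t^2+o(t^2)\geq0$ does force $\Gamma'(0)\le1$. So in the zero-field setting the extra input $h(0)=0$ is exactly what makes the one-sided minimum sufficient, and your argument removes the dependence on the cited proposition for this part.

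The price is that you need $\Gamma\in C^1([0,1))$ with $\Gamma'(t)=\E\,\mathsf C(t,X_t)^2$ at every support point, not only inside gaps. Away from gaps $\gamma$ may jump or have a singular part, so $u$ is not $C^1$ in time. ``Differentiate the PDE and apply It\^o'' is therefore not literally available there. You should justify this step by the semimartingale identity of \cite[Lemma~3]{ChenHandschyLerman}, which is Lemma~\ref{lem:zt-Ito-identities-general} and does not depend on the present proposition. Alternatively, use step-function approximation with Lemma~\ref{lem:zt-parabolic-stability}.

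Likewise, the continuity of $\Gamma$ used in your Step~2 comes from the joint continuity and boundedness of $u_x$ and $\mathsf C$ in Lemma~\ref{lem:zt-parabolic-stability}, not from Lemma~\ref{lem:zt-PDE-facts}. The gap-endpoint formula and $0\in S$ are handled as in the paper.
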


\begin{proof}
Fix $q<1$ and $c>0$.  Adding an atom $c\delta_q$ to $\nu$ replaces $\gamma$ by $\eta(t):=\gamma(t)+c\1_{[q,1)}(t)$.  Minimality and \eqref{eq:zt-directional-derivative} give $G(q)\geq0$.

Both $2\gamma$ and $0$ belong to $\cU$.  Applying \eqref{eq:zt-directional-derivative} in these two directions gives
\begin{equation}
 \int_0^1\gamma(t)h(t)\dd t=0.
 \label{eq:zt-scaling-stationarity}
\end{equation}
Since $|h(t)|\leq1$ and $\gamma\in L^1$, Fubini's theorem is absolutely applicable:
\begin{align}
 \int_{[0,1)}\int_q^1|h(t)|\dd t\,\nu(\mathrm d q)
 &=\int_0^1\gamma(t)|h(t)|\dd t<\infty,                         \notag\\
 \int_0^1\gamma(t)h(t)\dd t
 &=\int_{[0,1)}G(q)\,\nu(\mathrm d q).                               \label{eq:zt-Fubini-G}
\end{align}
The relations \eqref{eq:zt-scaling-stationarity} and \eqref{eq:zt-Fubini-G} give
\begin{equation*}
 0
 \stackref{=}{\eqref{eq:zt-scaling-stationarity}}
 \int_0^1\gamma(t)h(t)\dd t
 \stackref{=}{\eqref{eq:zt-Fubini-G}}
 \int_{[0,1)}G(q)\,\nu(\mathrm d q).
\end{equation*}
Together with \eqref{eq:zt-G-nonnegative}, this shows that $G=0$ $\nu$-almost everywhere. As $G$ is continuous, it vanishes at every point of $S$.

The two assertions in \eqref{eq:zt-consistency-stability} are exactly the zero-temperature consistency and stability conditions in \cite[Proposition 3]{ChenHandschyLerman}; invoking that proposition also covers the boundary point $q=0$, where a one-sided minimum of $G$ alone would not imply stability. Suppose that $\gamma$ is constant on $(a,b)$. The PDE is classical in time on this interval. Differentiating once in space and applying It\^o's formula along \eqref{eq:zt-optimal-SDE} on a compact subinterval gives
\begin{equation*}
  \dd u_x(t,X_t)=u_{xx}(t,X_t)\dd W_t.
\end{equation*}
Consequently, for $s<t$ in that interval,
\begin{equation*}
 \Gamma(t)-\Gamma(s)
 =\int_s^t\E u_{xx}(r,X_r)^2\dd r.
\end{equation*}
Apply Lemma~\ref{lem:zt-parabolic-stability} with $b<T<T'<1$, $g_n=g=|\cdot|$, and $\alpha_n=\alpha=\gamma$. It follows that $u_{xx}$ is bounded and jointly continuous on $[0,T]\times\R$. The bounded drift on $[0,T]$ gives $\lim_{r\downarrow a}\E|X_r-X_a|^2=0$ and $\lim_{r\uparrow b}\E|X_r-X_b|^2=0$. Therefore
\begin{equation*}
 \lim_{r\downarrow a}\E u_{xx}(r,X_r)^2=\E u_{xx}(a,X_a)^2
 \quad\text{and}\quad
 \lim_{r\uparrow b}\E u_{xx}(r,X_r)^2=\E u_{xx}(b,X_b)^2.
\end{equation*}
Combining these limits with the preceding integral identity proves \eqref{eq:zt-gap-endpoint-derivatives}. Finally, the zero-field argument in the paragraph preceding \cite[Proposition 5]{ChenHandschyLerman} shows that the minimum of the support is zero: a positive minimum would make both zero and that minimum fixed points of the overlap map used there. The diffusion at every positive time has a smooth, strictly positive density by uniform ellipticity and the spatially smooth bounded drift, which supplies the full-support input in that argument. Hence $0\in S$.
\end{proof}

\begin{lemma}[Parabolic stability away from the terminal time]
\label{lem:zt-parabolic-stability}
Let $T<T'<1$, and let $\alpha_n,\alpha\in\cU$ satisfy
\begin{equation*}
 \sup_n\|\alpha_n\|_{L^\infty(0,T')}<\infty
 \quad\text{and}\quad
 \lim_{n\to\infty}\|\alpha_n-\alpha\|_{L^1(0,1)}=0.
\end{equation*}
Let $g_n,g:\R\to\R$ be convex one-Lipschitz functions such that $\lim_{n\to\infty}\|g_n-g\|_\infty=0$, and define
\begin{equation*}
 v_n:=u^{g_n,\alpha_n}\quad\text{and}\quad v:=u^{g,\alpha}
\end{equation*}
as in \eqref{eq:zt-general-PDE}. Then
\begin{equation}
 \lim_{n\to\infty}\|v_n-v\|_{L^\infty([0,T']\times\R)}=0.
 \label{eq:zt-parabolic-uniform-stability}
\end{equation}
For every integer $k\geq1$ and $R<\infty$,
\begin{equation}
 \lim_{n\to\infty}
 \sup_{\substack{0\leq t\leq T\\|x|\leq R}}
 |\partial_x^kv_n(t,x)-\partial_x^kv(t,x)|=0.
 \label{eq:zt-parabolic-Ck-stability}
\end{equation}
For each $n$ and each $k\geq1$, the functions $\partial_x^kv_n$ and $\partial_x^kv$ are jointly continuous on $[0,T]\times\R$. For every $k\geq2$,
\begin{equation}
 \sup_n\sup_{\substack{0\leq t\leq T\\x\in\R}}
 |\partial_x^kv_n(t,x)|
 +\sup_{\substack{0\leq t\leq T\\x\in\R}}
 |\partial_x^kv(t,x)|<\infty.
 \label{eq:zt-global-parabolic-derivative-bound}
\end{equation}
In addition,
\begin{equation}
 \lim_{n\to\infty}
 \sup_{\substack{0\leq t\leq T\\x\in\R}}
 |v_{n,x}(t,x)-v_x(t,x)|=0.
 \label{eq:zt-global-gradient-convergence}
\end{equation}
\end{lemma}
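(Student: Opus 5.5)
The plan is to use the Cole--Hopf representation on $[T',1]$ and a time-stepping (or stochastic-control) representation on $[0,T']$, where all coefficients are bounded.

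\textbf{Step 1: reduction to $t=T'$.} First I show that $\|v_n(T',\cdot)-v(T',\cdot)\|_\infty\to0$. Each $v_n(T',\cdot)$ is convex and one-Lipschitz, being a value function of a control problem with terminal datum $g_n$. The stochastic-control (Auffinger--Chen) representation
\begin{equation*}
 v(t,x)=\sup_{\theta}\E\Bigl[g\Bigl(x+\int_t^1\alpha(s)\theta_s\dd s+W_1-W_t\Bigr)-\frac12\int_t^1\alpha(s)\theta_s^2\dd s\Bigr]
\end{equation*}
with $|\theta|\le1$ gives the 1-Lipschitz dependence of $v$ on $g$ in sup norm. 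The dependence on $\alpha$ is handled through the pathwise bound $|\int(\alpha_n-\alpha)\theta|\le\|\alpha_n-\alpha\|_{L^1}$ together with the Lipschitz property of $g$. Each supremum then changes by at most $\tfrac32\|\alpha_n-\alpha\|_{L^1}+\|g_n-g\|_\infty$, and this bound is uniform in $(t,x)$. This proves \eqref{eq:zt-parabolic-uniform-stability} on all of $[0,1)\times\R$, and in particular on $[0,T']\times\R$. The main point to check is that optimal controls may be restricted to $|\theta|\le1$ because $|v_x|\le1$; this is standard in \cite{ChenHandschyLerman}.

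\textbf{Step 2: derivative bounds on $[0,T]$.} On $[0,T']$ the coefficient is bounded by $M:=\sup_n\|\alpha_n\|_{L^\infty(0,T')}$. If $\alpha$ is a step function, then on each step the Cole--Hopf transform $e^{\alpha v}$ solves the heat equation. Iterating over the steps expresses $v(t,\cdot)$ in terms of heat-kernel convolutions of $v(T',\cdot)$ over time length $T'-t\ge T'-T>0$, with log-exponent parameters bounded by $M$. Differentiating the Gaussian kernels moves all $x$-derivatives onto the kernels. Since $v(T',\cdot)$ is $1$-Lipschitz, the dependence is only through $v(T',\cdot)-v(T',0)$, which grows at most linearly. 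This yields bounds on $\partial_x^kv$ for $k\ge2$ that depend only on $k$, $M$ and $T'-T$; for $k=1$ the bound $|v_x|\le1$ is already available.

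For general $\alpha$, I approximate in $L^1$ by step functions bounded by $M$ and pass to the limit using Step 1 together with Arzel\`a--Ascoli. The same kernel formulas show that the $\partial_x^kv$ are jointly continuous, since $t$-continuity comes from the time-integrated form. This gives \eqref{eq:zt-global-parabolic-derivative-bound}.

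\textbf{Step 3: convergence of derivatives.} Uniform bounds on $\partial_x^{k+1}v_n$ make $\{\partial_x^kv_n\}$ equicontinuous in $x$. Equicontinuity in $t$ follows from the PDE written in mild, Duhamel form: the time increments are controlled by bounded spatial derivatives and by $\int|\alpha_n|\le$ const. Combined with the uniform convergence $v_n\to v$ from Step 1, Arzel\`a--Ascoli and uniqueness of the limit give \eqref{eq:zt-parabolic-Ck-stability} on compacts. Here I interpolate, e.g.\ $\|f'\|_\infty\lesssim\|f\|_\infty^{1/2}\|f''\|_\infty^{1/2}$ applied in $x$ to $f=v_n-v$; this in fact gives global convergence of every derivative order.

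\textbf{Step 4: global gradient convergence.} The interpolation inequality applied to $f=v_n(t,\cdot)-v(t,\cdot)$ gives
\begin{equation*}
 \sup_x|v_{n,x}-v_x|\le C\|v_n-v\|_\infty^{1/2}\sup\bigl(|v_{n,xx}|+|v_{xx}|\bigr)^{1/2}.
\end{equation*}
The right side tends to zero uniformly for $t\le T$ by Steps 1 and 2, which proves \eqref{eq:zt-global-gradient-convergence}.

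I expect Step 2 to be the main obstacle. The difficulty is obtaining derivative bounds uniform in $x\in\R$ and in $n$ under only an $L^\infty$ bound on $\alpha_n$. The approach I would try first is the iterated Cole--Hopf formula for step functions, with the Gaussian-smoothing gain coming from the fixed gap $T'-T$.
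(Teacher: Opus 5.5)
Your architecture matches the paper's. First, a global sup-norm stability estimate: the paper gets $\|g_n-g\|_\infty+\frac12\|\alpha_n-\alpha\|_{L^1(0,1)}$ from the comparison principle, and your control bound with constant $\frac32$ works equally well. Second, derivative bounds on $[0,T]$ for the problem restarted from the convex one-Lipschitz datum $v_n(T',\cdot)$. Third, Arzel\`a--Ascoli with identification of limits. Fourth, an interpolation step; your Landau--Kolmogorov inequality is the paper's difference-quotient argument.

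The gap is Step~2, which carries all of \eqref{eq:zt-global-parabolic-derivative-bound}. The paper does not prove it. After time reversal it imports the bound from \cite[Proposition~2(ii)]{ChenHandschyLerman}, whose proof uses only convexity, the one-Lipschitz bound, $\sup_n\|\alpha_n\|_{L^\infty(0,T')}<\infty$ and $T'-T>0$.

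Your mechanism does not give bounds depending only on $k$, $M$ and $T'-T$. The iterated Cole--Hopf formula is a composition of the nonlinear maps $f\mapsto m_j^{-1}\log P_{\tau_j}e^{m_jf}$. It is not a single convolution of $v(T',\cdot)$ over time $T'-t$, so the $x$-derivatives cannot all be moved onto kernels. Differentiating produces tilted variances and cumulants of derivatives of the inner function, e.g.\ $(\mathcal Tf)''=\langle f''\rangle+m\Var(f')$ under the tilted Gaussian average $\langle\cdot\rangle$. Integrating by parts against the kernel of a single step costs $\tau_j^{-1/2}$. For step functions approximating a general coefficient, the $\tau_j$ are arbitrarily small and their number is unbounded. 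Iterating one-step estimates therefore gives either a Riccati-type bound or one that grows with the number of steps. Moreover, at time $T'$ you control only the Lipschitz constant of $v_n(T',\cdot)$, so such a recursion has no controlled starting value. Without uniformity in $n$, the equicontinuity in Step~3 and the interpolation constant in Step~4 are unsupported.

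To close the gap, either cite \cite[Proposition~2(ii)]{ChenHandschyLerman} as the paper does, or extract an a priori bound from the mild form you already use in Step~3. With $\widetilde g:=v(T',\cdot)$, differentiating $v(t)=P_{T'-t}\widetilde g+\frac12\int_t^{T'}P_{s-t}[\alpha(s)v_x(s)^2]\dd s$ twice gives
\[
 v_{xx}(t)=\partial_xP_{T'-t}\widetilde g'+\int_t^{T'}\partial_xP_{s-t}\bigl[\alpha(s)v_x(s)v_{xx}(s)\bigr]\dd s .
\]
Since $\|\partial_xP_\tau\|_{L^\infty\to L^\infty}\leq\tau^{-1/2}$, $|\widetilde g'|\leq1$ and $|v_x|\leq1$, the function $y(t):=\|v_{xx}(t,\cdot)\|_\infty$ satisfies $y(t)\leq(T'-t)^{-1/2}+M\int_t^{T'}(s-t)^{-1/2}y(s)\dd s$.

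For a step coefficient, $\sqrt{T'-s}\,y(s)$ is a priori bounded. Use $\int_t^{T'}(s-t)^{-1/2}(T'-s)^{-1/2}\dd s=\pi$ on intervals of length $\delta$ with $M\pi\sqrt\delta\leq\frac12$, then restart at intermediate times, where $v(r,\cdot)$ is still one-Lipschitz. This gives $y\leq C(M,T'-T)$ on $[0,T]$, independently of the step structure. Higher derivatives follow inductively from the same formula, which is linear in the top derivative with coefficients and sources built from already bounded lower derivatives.

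Alternatively, your idea of moving the derivative onto the Gaussian does work in the Girsanov representation of $v_x(t,x)$. There it is a Brownian expectation of $\widetilde g'(x+W_{T'})$ times a density involving only $\widetilde g$ and the values of $v$ at the jump times. The shift must be taken along the Cameron--Martin direction $\psi(s)=(s-t)/(T'-t)$, spread over all of $[t,T']$, rather than onto one short step. The jump factors then contribute only $v_x$, with total weight at most $M$, and one obtains $|v_{xx}(t,x)|\leq(T'-t)^{-1/2}+3M$.
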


\begin{proof}
See Appendix~\ref{a.pf.lem:zt-parabolic-stability}.
\end{proof}

For $s\geq0$, define $P_sf(x):=\E f(x+\sqrt sZ)$, where $Z\sim N(0,1)$ has mean zero and variance one, and set $P_0f:=f$. Thus $P_s$ has generator $\partial_{xx}/2$. Define the distribution function and upper-tail function of $Z$ by
\begin{align}
 \phi(y)&:=\PP(Z\leq y)=\frac1{\sqrt{2\pi}}\int_{-\infty}^ye^{-z^2/2}\dd z,\notag\\
 \overline\phi(y)&:=1-\phi(y)=\PP(Z>y),
 \qquad \forall y\in\R.
 \label{eq:standard-Gaussian-functions}
\end{align}
For $\lambda>0$, define
\begin{equation}
 h_\lambda(x):=\lambda^{-1}\log\cosh(\lambda x),
 \qquad \forall x\in\R.
 \label{eq:zt-terminal-regularization}
\end{equation}

\begin{lemma}[Terminal regularization and the third-derivative sign]
\label{lem:zt-regularized-terminal-data}
For every $\lambda>0$, define
\begin{equation*}
 u_\lambda:=u^{\lambda,\gamma\wedge\lambda}=u^{h_\lambda,\gamma\wedge\lambda}.
\end{equation*}
For every $T<1$ and $j\geq0$,
\begin{equation}
 \lim_{\lambda\to\infty}\partial_x^ju_\lambda
 =\partial_x^ju^{|\cdot|,\gamma}
 \quad\text{locally uniformly on }[0,T]\times\R.
 \label{eq:zt-lambda-smooth-convergence}
\end{equation}
In particular, for the unadorned solution $u:=u^{|\cdot|,\gamma}$,
\begin{equation}
 \mathsf D(t,x)=u_{xxx}(t,x)\leq0,
 \qquad \forall (t,x)\in[0,1)\times(0,\infty).
 \label{eq:zt-D-negative}
\end{equation}
\end{lemma}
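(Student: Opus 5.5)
The convergence \eqref{eq:zt-lambda-smooth-convergence} will come from Lemma~\ref{lem:zt-parabolic-stability}, and the sign \eqref{eq:zt-D-negative} from a Cole--Hopf and log-concavity argument for the regularized problems, transferred to the limit.

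\textbf{Step 1: convergence.} Fix $T<T'<1$ and take $\alpha_n:=\gamma\wedge\lambda_n$, $\alpha:=\gamma$ along an arbitrary sequence $\lambda_n\to\infty$. Since $\gamma$ is nondecreasing and finite on $[0,T']$, we have $\sup_n\|\alpha_n\|_{L^\infty(0,T')}\le\gamma(T')<\infty$. Monotone convergence with $\gamma\in L^1$ gives $\|\gamma\wedge\lambda_n-\gamma\|_{L^1(0,1)}\to0$. The terminal data $h_{\lambda}$ are convex and one-Lipschitz, and
\[
0\le |x|-h_\lambda(x)\le \lambda^{-1}\log 2,
\]
so $h_{\lambda_n}\to|\cdot|$ uniformly. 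Lemma~\ref{lem:zt-parabolic-stability} then gives uniform convergence for $j=0$ on $[0,T']\times\R$, hence on $[0,T]\times\R$. For $j\geq1$, \eqref{eq:zt-parabolic-Ck-stability} gives locally uniform convergence on $[0,T]\times\R$. Since the sequence was arbitrary, \eqref{eq:zt-lambda-smooth-convergence} follows.

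\textbf{Step 2: the sign for bounded coefficients and smooth data.} It suffices to prove $\partial_x^3u_\lambda\le0$ on $[0,1)\times(0,\infty)$ for every $\lambda$; Step 1 then passes this pointwise to $\mathsf D$ on $[0,T]\times(0,\infty)$ for every $T<1$. The coefficient $\gamma\wedge\lambda$ is bounded, so I approximate it in $L^1$ by step functions and use Lemma~\ref{lem:zt-parabolic-stability} once more. This reduces the claim to the case where $\gamma\wedge\lambda$ is replaced by a step function with values $0\le m_1\le\dots\le m_k$ on successive intervals, since the sign survives locally uniform limits. I will argue backward in time by induction over these intervals, carrying the following invariant for $v(t,\cdot)$:
\begin{itemize}[label={}]
\item $v$ is even, $v_x$ is odd with values in $(-1,1)$, and $v_{xx}$ is even, positive, and nonincreasing on $(0,\infty)$, which is the same as $v_{xxx}\le0$ there.
\end{itemize}
At time $1$ the invariant holds because $h_\lambda''(x)=\lambda\,\sech^2(\lambda x)$ is even, positive, and decreasing in $|x|$.

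\textbf{Step 3: propagation across one interval (the main obstacle).} On an interval where the coefficient equals a constant $m>0$, Cole--Hopf gives $e^{mv(t,\cdot)}=P_{s}e^{mv(t_1,\cdot)}$ with $s=t_1-t$. (If $m=0$, then $v_{xx}$ is simply heat-propagated, and the heat semigroup preserves evenness and unimodality of $v_{xx}$ because the Gaussian kernel is symmetric unimodal.) The invariant ``$v_{xx}$ symmetric and decreasing in $|x|$'' is \emph{not} a standard preserved property, and this is the step I expect to be hardest. My first attempt is to write
\[
v_{xx}=\frac1m\bigl(\operatorname{Var}_{t,x}(mY)\bigr),
\]
where $Y$ is the point $y$ under the tilted Gaussian measure proportional to $e^{mv(t_1,y)-(y-x)^2/(2s)}$. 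I then need $\partial_x\operatorname{Var}\le0$ for $x>0$, that is, a third-cumulant sign, which I would try to derive from the symmetric-decreasing property of $v_{xx}(t_1,\cdot)$ via an FKG-type correlation inequality. If this fails, I will differentiate the PDE and use a maximum-principle argument for $w=v_{xxx}$. It satisfies
\[
w_t+\tfrac12 w_{xx}+m v_x w_x+3m v_{xx}w=0,
\]
which is linear in $w$ with bounded coefficients (by \eqref{eq:zt-global-parabolic-derivative-bound}). Oddness gives $w(t,0)=0$, and $w\le0$ at the terminal time on $(0,\infty)$. So a maximum principle on the half-line $(0,\infty)$, with Dirichlet condition at $0$ and growth control at infinity, yields $w\le0$. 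This second route looks more robust, and I would pursue it as the main line, with the zero-order term handled by the usual exponential-in-time rescaling.
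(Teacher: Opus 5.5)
Your main line is essentially the paper's proof: the convergence comes from Lemma~\ref{lem:zt-parabolic-stability}, the coefficient is reduced to a step function, and on each constancy interval a half-line maximum principle for $v_{xxx}$ (with the $e^{-Ms}$ rescaling, the Dirichlet value $0$ at $x=0$ from oddness, and the nonpositive terminal datum $h_\lambda'''$) is iterated across the finitely many intervals. The one point to adjust is that the coefficient bounds on the interval ending at $t=1$ cannot come from \eqref{eq:zt-global-parabolic-derivative-bound}, which only covers $[0,T]$ with $T<1$; they come instead from the smoothness of $h_\lambda$, either via the Cole--Hopf formula you already invoke or via the regularity results the paper cites after rescaling to $\log\cosh$.
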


\begin{proof}
See Appendix~\ref{a.pf.lem:zt-regularized-terminal-data}
\end{proof}

\subsection{\texorpdfstring{The function $Q$ and monotonicity of $Q_{xx}/Q$}{The function Q and monotonicity of Qxx/Q}}
\label{sec:zt-Q-properties}

For every $t\in(0,1)$, define $Q(t,\cdot):\R\to(0,\infty)$ in terms of the density $\rho_t$ from \eqref{eq:zt-rho-density} by
\begin{equation}
 Q(t,x):=\rho_t(x)e^{-\gamma(t)u(t,x)},
 \qquad \forall x\in\R.
 \label{eq:zt-Q-def}
\end{equation}
If $t\in(0,1)$ is a jump point of $\gamma$, define the left limit by
\begin{equation*}
 Q(t-,x):=\rho_t(x)e^{-\gamma(t-)u(t,x)},
 \qquad \forall x\in\R.
\end{equation*}
Thus $Q(t,\cdot)$ uses the right-continuous value $\gamma(t)$, whereas $Q(t-,\cdot)$ uses $\gamma(t-)$.

\begin{lemma}[Heat evolution and jump rule]
\label{lem:zt-Q-dynamics}
Let $u:=u^{|\cdot|,\gamma}$ solve \eqref{eq:zt-general-PDE}, let $X$ solve \eqref{eq:zt-optimal-SDE}, and define $Q$ from $u$, $\gamma$, and the density $\rho_t$ of $X_t$ as above. If $0\leq a<b<1$ and $\gamma(r)=m$ for every $r\in(a,b)$, then
\begin{equation}
 \partial_tQ(t,x)=\frac12\partial_{xx}Q(t,x),
 \qquad \forall (t,x)\in(a,b)\times\R.
 \label{eq:zt-Q-heat}
\end{equation}
If $t\in(0,1)$ is a jump point of $\gamma$ and $\delta:=\gamma(t)-\gamma(t-)>0$, then
\begin{equation}
 Q(t,x)=e^{-\delta u(t,x)}Q(t-,x),
 \qquad \forall x\in\R.
 \label{eq:zt-Q-jump}
\end{equation}
\end{lemma}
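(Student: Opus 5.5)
The jump rule \eqref{eq:zt-Q-jump} is immediate from the definitions. At a jump time $t$, the process $X$ is continuous, so its density $\rho_t$ is the same in both definitions. The PDE solution $u(t,\cdot)$ is also continuous in $t$ by Lemma~\ref{lem:zt-PDE-facts}. Hence
\begin{equation*}
 Q(t,x)/Q(t-,x)=e^{-(\gamma(t)-\gamma(t-))u(t,x)},
\end{equation*}
and there is nothing more to check.

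For the heat evolution \eqref{eq:zt-Q-heat} the plan is a direct computation. On $(a,b)$ the coefficient is the constant $m$, so $u$ is classical in time there and smooth in space. Its spatial derivatives are bounded on $[0,T]\times\R$ by Lemma~\ref{lem:zt-parabolic-stability}, and they are jointly continuous. The density $\rho_t$ solves the Fokker--Planck equation
\begin{equation*}
 \partial_t\rho=\tfrac12\rho_{xx}-m\,(u_x\rho)_x
\end{equation*}
on $(a,b)\times\R$. Write $\rho=Qe^{mu}$ and substitute. One gets
\begin{equation*}
 \partial_t\rho=e^{mu}\bigl(Q_t+mQu_t\bigr),
 \qquad
 \rho_x=e^{mu}\bigl(Q_x+mu_xQ\bigr),
\end{equation*}
and
\begin{equation*}
 \rho_{xx}=e^{mu}\bigl(Q_{xx}+2mu_xQ_x+mu_{xx}Q+m^2u_x^2Q\bigr).
\end{equation*}
For the drift term,
\begin{equation*}
 (u_x\rho)_x=e^{mu}\bigl(u_{xx}Q+u_xQ_x+mu_x^2Q\bigr).
\end{equation*}
Now collect terms. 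The $Q_x$ terms are $mu_xQ_x-mu_xQ_x=0$. The $u_{xx}$ terms are $\tfrac m2u_{xx}Q-mu_{xx}Q=-\tfrac m2u_{xx}Q$. The $u_x^2$ terms are $\tfrac{m^2}2u_x^2Q-m^2u_x^2Q=-\tfrac{m^2}2u_x^2Q$. This leaves
\begin{equation*}
 Q_t+mQu_t=\tfrac12Q_{xx}-\tfrac m2Q\bigl(u_{xx}+mu_x^2\bigr).
\end{equation*}
The PDE gives $u_t=-\tfrac12(u_{xx}+mu_x^2)$, so the last term cancels $mQu_t$ exactly. Hence $Q_t=\tfrac12Q_{xx}$.

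The main obstacle is justifying that $\rho$ is a classical (or at least distributional) solution of the Fokker--Planck equation on $(a,b)\times\R$, so that this manipulation is legitimate. Lemma~\ref{lem:zt-PDE-facts} gives smoothness of $\rho_t$ in $x$ but says nothing about regularity in $t$. The plan is to work distributionally. For a test function $\varphi\in C_c^\infty((a,b)\times\R)$, apply It\^o's formula to $\varphi(t,X_t)$ and take expectations; the bounded drift $mu_x$ makes the martingale term vanish. This shows that $\rho$ solves the forward equation in the sense of distributions, with a drift that is smooth in $x$, continuous in $t$, and bounded. The identical computation with $e^{-mu}\varphi$ as test function (also admissible, since $u$ is $C^{1,\infty}$ on the strip) then shows that $Q$ is a distributional solution of the heat equation on $(a,b)\times\R$. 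Weyl's lemma for the heat operator (hypoellipticity) upgrades $Q$ to a smooth classical solution, which gives \eqref{eq:zt-Q-heat} pointwise.

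Alternatively, one can bypass the distributional step using the product structure on the interval. By Cole--Hopf, $e^{mu}$ solves the backward heat equation. Then $Q=\rho e^{-mu}$ is the Doob $h$-transform density, i.e. the density of Brownian motion reweighted by $e^{mu(b',\cdot)}$ at a later time $b'$. From this representation, $Q(t,\cdot)=P_{t-s}Q(s,\cdot)$ for $a<s<t<b$ follows from the Markov property. This also proves \eqref{eq:zt-Q-heat}.
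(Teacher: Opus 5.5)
Your proof is correct and follows the paper's argument. The jump rule is read off directly from the definitions of $Q(t,\cdot)$ and $Q(t-,\cdot)$. The heat equation comes from substituting $\rho=Qe^{mu}$ into the Fokker--Planck equation and cancelling against the Parisi PDE, which is exactly the computation the paper performs.

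Your additional justification of the Fokker--Planck step is a useful addition, since the paper takes the needed time regularity for granted. Either of your routes supplies it: the distributional identity combined with hypoellipticity, or the Doob $h$-transform with $h=e^{mu}$, which yields $Q(t,\cdot)=P_{t-s}Q(s,\cdot)$.
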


\begin{proof}
Suppose that $\gamma(r)=m$ for every $r\in(a,b)$. For every $(t,x)\in(a,b)\times\R$, the Fokker--Planck equation and the Parisi PDE give
\begin{equation*}
 \partial_t\rho_t(x)=\frac12\partial_{xx}\rho_t(x)-m\partial_x(u_x(t,x)\rho_t(x))
 \quad\text{and}\quad
 \partial_tu(t,x)=-\frac12(u_{xx}(t,x)+mu_x(t,x)^2).
\end{equation*}
Differentiating $Q(t,x)=\rho_t(x)e^{-mu(t,x)}$ twice in $x$ and once in $t$ gives $\partial_tQ(t,x)=\partial_{xx}Q(t,x)/2$. If $t\in(0,1)$ is a jump point of $\gamma$, the continuity of $u$ and $\rho$ in $t$ gives
\begin{equation*}
 \frac{Q(t,x)}{Q(t-,x)}
 =e^{-(\gamma(t)-\gamma(t-))u(t,x)},
 \qquad \forall x\in\R,
\end{equation*}
which proves \eqref{eq:zt-Q-jump}.
\end{proof}

Extend $\gamma$ by $\gamma(0-):=0$.  Since $\rho_0=\delta_0$, the right-continuous initial measure for $Q$ is
\begin{equation*}
 Q(0,\mathrm d x):=c_0\delta_0(\mathrm d x)
 \quad\text{and}\quad c_0:=e^{-\gamma(0)u(0,0)}.
\end{equation*}
Applying Lemma~\ref{lem:zt-Q-dynamics} first when $\gamma$ is a step function and then passing to the limit gives the following formula.

\begin{lemma}[Stieltjes Brownian-bridge formula]
\label{lem:zt-bridge-formula}
Let $u:=u^{|\cdot|,\gamma}$ solve \eqref{eq:zt-general-PDE}, let $X$ solve \eqref{eq:zt-optimal-SDE}, and let $Q$ be defined from $u$, $\gamma$, and the density of $X$ as above. For every $t\in(0,1)$,
\begin{equation}
 Q(t,x)=c_0p_t(x)\,
 \E_{0\to x}^{\mathrm{BB},t}
 \exp\left\{-\int_{(0,t]}u(s,B_s)\,\mathrm d\gamma(s)\right\},
 \qquad \forall x\in\R,
 \label{eq:zt-Stieltjes-bridge}
\end{equation}
where $p_t(x):=(2\pi t)^{-1/2}e^{-x^2/(2t)}$ and the expectation is over a Brownian bridge from $0$ to $x$ in time $t$.  The atom at zero is contained in $c_0$, while an atom at $t$ is included in the integral, consistently with the right-continuous convention.

There exist nondecreasing step functions $\gamma_n\in\cU$ with the following property. Let $u_n:=u^{|\cdot|,\gamma_n}$ solve \eqref{eq:zt-general-PDE}, let $X^n$ solve \eqref{eq:zt-optimal-SDE} with $(\gamma,u,X)$ replaced by $(\gamma_n,u_n,X^n)$, let $\rho_t^n$ be the density of $X_t^n$, and define
\begin{equation*}
 Q_n(t,x):=\rho_t^n(x)e^{-\gamma_n(t)u_n(t,x)},
 \quad\text{and}\quad
 Q_n(t-,x):=\rho_t^n(x)e^{-\gamma_n(t-)u_n(t,x)},
 \qquad \forall (t,x)\in(0,1)\times\R.
\end{equation*}
For every continuity point $t\in(0,1)$ of $\gamma$ and every compact $K\subset\R$,
\begin{equation}
 \lim_{n\to\infty}\|Q_n(t,\cdot)-Q(t,\cdot)\|_{C^2(K)}=0.
 \label{eq:zt-Q-C2-convergence}
\end{equation}
If $t\in(0,1)$ is a jump point of $\gamma$, then
\begin{equation*}
 \lim_{n\to\infty}\|Q_n(t-,\cdot)-Q(t-,\cdot)\|_{C^2(K)}=0,
 \qquad \forall K\subset\R\text{ compact}.
\end{equation*}
\end{lemma}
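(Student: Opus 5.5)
The plan is to prove the formula first for step functions by iterating Lemma~\ref{lem:zt-Q-dynamics}, and then to pass to general $\gamma$ by an approximation argument.

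\emph{Step functions.} Let $\gamma$ be a step function with jumps at $0<t_1<\dots<t_k<t$, and set $t_0:=0$. The initial condition is $Q(0,\mathrm d x)=c_0\delta_0$, which already contains the atom at zero. On each interval $(t_{i-1},t_i)$, Lemma~\ref{lem:zt-Q-dynamics} says that $Q$ solves the heat equation with generator $\partial_{xx}/2$. Hence
\begin{equation*}
 Q(t_i-,\cdot)=P_{t_i-t_{i-1}}Q(t_{i-1},\cdot).
\end{equation*}
At $t_i$ the jump rule multiplies by $e^{-\delta_iu(t_i,\cdot)}$. Iterating gives
\begin{equation*}
 Q(t,x)=c_0\int\prod_i p_{t_i-t_{i-1}}(x_i-x_{i-1})\,e^{-\delta_iu(t_i,x_i)}\,p_{t-t_k}(x-x_k)\dd x_1\cdots\dd x_k,
\end{equation*}
with $x_0:=0$. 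By the Markov property of Brownian motion, this equals
\begin{equation*}
 c_0p_t(x)\,\E^{\mathrm{BB},t}_{0\to x}\exp\Bigl\{-\sum_i\delta_iu(t_i,B_{t_i})\Bigr\},
\end{equation*}
which is \eqref{eq:zt-Stieltjes-bridge} for step functions. If $t$ itself is a jump point, the factor at $t$ is included, matching the right-continuous convention. The heat-evolution step needs a little care because $Q$ is only known to solve the heat equation classically inside each interval. I would use continuity of $\rho_t$ and $u$ at the endpoints, together with Gaussian tail bounds, to identify $Q(t_i-,\cdot)$ with the heat semigroup applied to $Q(t_{i-1},\cdot)$.

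\emph{Approximation.} For general $\gamma\in\cU$, fix $T\in(t,1)$ and choose nondecreasing step functions $\gamma_n$ converging to $\gamma$ at every continuity point. They should also satisfy $\gamma_n(0)\to\gamma(0)$, have uniformly bounded sup on $[0,T']$, converge to $\gamma$ in $L^1(0,1)$, and leave the jump points of $\gamma$ unsplit. A natural choice takes $\gamma_n$ to be the right-continuous dyadic lower step approximation $\gamma_n(s)=\gamma(\lfloor 2^ns\rfloor2^{-n})$ on $[0,T']$, followed by a tail that keeps integrability, for instance $\gamma_n=\gamma\wedge n$ beyond $T'$ made into a step function.

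Lemma~\ref{lem:zt-parabolic-stability} with $g_n=g=|\cdot|$ then gives $u_n\to u$ uniformly on $[0,T']\times\R$, and local $C^k$ convergence of $\partial_x^ku_n$ for $k\ge1$. Two consequences follow:
\begin{itemize}
\item Since the drifts converge uniformly with a uniform bound, the SDEs converge. With uniform ellipticity, Girsanov/parabolic estimates give $\rho^n_t\to\rho_t$ in $C^2_{\mathrm{loc}}$. Equivalently, one can use the Fokker--Planck equation with drift bounded in $C^k$ uniformly in $n$, which yields uniform interior Schauder bounds.
\item The prefactor $e^{-\gamma_n(t)u_n(t,\cdot)}$ converges in $C^2_{\mathrm{loc}}$ at every continuity point $t$. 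At a jump point, $\gamma_n(t-)\to\gamma(t-)$ gives the analogous convergence of $Q_n(t-,\cdot)$.
\end{itemize}
Together these prove \eqref{eq:zt-Q-C2-convergence} and its left-limit version.

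Next I pass to the limit in the bridge formula. The Stieltjes measures $\mathrm d\gamma_n$ restricted to $(0,t]$ converge weakly to $\mathrm d\gamma$ on $(0,t]$; I take $t$ to be a continuity point first, handling atoms at $t$ by the right-continuous choice of $\gamma_n$. The function $s\mapsto u_n(s,B_s)$ converges uniformly on $[0,t]$ to the continuous path $s\mapsto u(s,B_s)$. Hence the exponents $\int_{(0,t]}u_n(s,B_s)\dd\gamma_n(s)$ converge pathwise. The integrands are bounded below, because $u\ge0$ since $u\ge P|\cdot|\ge0$, so the exponentials are bounded by one. Dominated convergence then gives convergence of the bridge expectations. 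Combined with the convergence of $Q_n(t,x)$ to $Q(t,x)$ and of $c_0^n$ to $c_0$, this yields \eqref{eq:zt-Stieltjes-bridge}.

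The main obstacle is the $C^2$ convergence of the densities $\rho^n_t$. This needs parabolic regularity of the Fokker--Planck equation with drift $\gamma_n u_{n,x}$, which is only bounded-measurable in time. I would handle it through the step-function bridge representation itself: differentiate under the bridge expectation in $x$, using uniform bounds on $u_{n,x}$ and $u_{n,xx}$ on $[0,T]$ to get uniform $C^3_{\mathrm{loc}}$ bounds on $Q_n(t,\cdot)$, and then apply Arzel\`a--Ascoli. The limit is identified by weak convergence of $\rho^n_t$, which follows from convergence of the SDE laws.
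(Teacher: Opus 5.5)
Your proof is correct. It shares the paper's skeleton: the step-function formula from the heat flow and jump rule of Lemma~\ref{lem:zt-Q-dynamics}, approximation controlled by Lemma~\ref{lem:zt-parabolic-stability}, and identification of the limit through weak convergence of the law of $X^n_t$ via Girsanov. You differ only in how the $C^2_{\mathrm{loc}}$ convergence is obtained.

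\emph{The paper's route.} It proves directly that, path by path, $\int_{(0,t]}(s/t)^j\partial_x^ju_n(s,B^x_s)\,\mathrm d\gamma_n(s)$ converges locally uniformly in $x$ for $j\le2$. This uses an $\varepsilon$-net on the compact family $\{s\mapsto\partial_x^ju(s,B^x_s)\}_{|x|\le R}\subset C([0,t])$. Dominated convergence then gives the $C^2_{\mathrm{loc}}$ limit of the bridge expressions and the formula in one stroke.

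\emph{Your route.} You take uniform $C^3$ bounds from the step-function representation and extract convergent subsequences by Arzel\`a--Ascoli. You then identify every limit as $Q(t,\cdot)$, so for the formula itself you need only pointwise-in-$x$ convergence of the bridge expectations. This legitimately trades one more derivative bound for the uniform-in-$x$ pathwise argument.

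A few repairs are needed.
\begin{enumerate}[label=\textup{(\roman*)},leftmargin=2.2em]
\item A uniform $C^3$ bound on $Q_n(t,\cdot)$ requires a uniform bound on $u_{n,xxx}$, not only on $u_{n,x}$ and $u_{n,xx}$. This bound is available from \eqref{eq:zt-global-parabolic-derivative-bound} with $k=3$.
\item The drifts $\gamma_nu_{n,x}$ do not converge uniformly, because $\gamma_n\to\gamma$ is not uniform near jumps. What holds, and what Girsanov needs, is $\int_0^t\|\gamma_nu_{n,x}-\gamma u_x\|_\infty^2\,\mathrm ds\to0$. This follows from $L^1$ convergence of $\gamma_n$, the uniform bound on $[0,T']$, and \eqref{eq:zt-global-gradient-convergence}. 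The Schauder/Fokker--Planck alternative is then unnecessary.
\item At a non-dyadic jump point $t$, your dyadic approximation gives $\gamma_n(t)=\gamma_n(t-)\to\gamma(t-)$. So the limit of the step-function formulas is the formula for $Q(t-,\cdot)$ over $(0,t)$, not $(0,t]$. The atom at $t$ must then be inserted by \eqref{eq:zt-Q-jump}, as the paper does; the phrase ``right-continuous choice of $\gamma_n$'' does not capture it.
\item The plain dyadic lower approximation $\gamma_n(s)=\gamma(\lfloor2^ns\rfloor2^{-n})$ on all of $[0,1)$ is already a bounded step function with $0\le\gamma_n\le\gamma$. It therefore converges in $L^1$ by dominated convergence and satisfies $\sup_n\gamma_n(T')\le\gamma(T')$. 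No tail modification is needed, and a single sequence serves every $t$, as the lemma requires.
\end{enumerate}
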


\begin{proof}
See Appendix~\ref{a.pf.lem:zt-bridge-formula}.
\end{proof}

For every $t\in(0,1)$, define $\mathfrak r(t,\cdot),H(t,\cdot):\R\to\R$ by
\begin{equation}
 \mathfrak r(t,x):=-\partial_x\log Q(t,x)
 \quad\text{and}\quad
 H(t,x):=\frac{Q_{xx}(t,x)}{Q(t,x)},
 \qquad \forall x\in\R.
 \label{eq:zt-r-H-def}
\end{equation}
Since $Q(t,x)>0$, direct differentiation gives
\begin{align}
 \mathfrak r_x(t,x)
 &=-\partial_{xx}\log Q(t,x)
 =-\frac{Q_{xx}(t,x)}{Q(t,x)}
   +\left(\frac{Q_x(t,x)}{Q(t,x)}\right)^2
 =-H(t,x)+\mathfrak r(t,x)^2,\notag\\
 H(t,x)&=\mathfrak r(t,x)^2-\mathfrak r_x(t,x),
 \qquad \forall x\in\R.
 \label{eq:zt-r-H-identity}
\end{align}

\begin{proposition}[Monotonicity of $Q_{xx}/Q$]
\label{prop:zt-H-monotonicity}
Let $u:=u^{|\cdot|,\gamma}$ solve \eqref{eq:zt-general-PDE}, let $X$ solve \eqref{eq:zt-optimal-SDE}, and define $Q$, $\mathfrak r$, and $H$ from $(u,\gamma,X)$ as above. For every $t\in(0,1)$, the function $Q(t,\cdot)$ is positive, even, and log-concave. Moreover,
\begin{equation}
 x\longmapsto H(t,x)=\frac{Q_{xx}(t,x)}{Q(t,x)}
 \quad\text{is nondecreasing on }(0,\infty).
 \label{eq:zt-H-monotone}
\end{equation}
If $\gamma$ is constant on a positive-length interval $(a,b)$, then $H(t,\cdot)$ is strictly increasing in the order sense on $(0,\infty)$ for every $t\in(a,b)$.
\end{proposition}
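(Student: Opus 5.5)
We reduce to step functions $\gamma$ and pass to the limit by Lemma~\ref{lem:zt-bridge-formula}. Fix a nondecreasing step function $\gamma_n$ with jumps at $0=t_0<t_1<\dots$. We propagate the following three properties inductively in time:
\begin{itemize}[label={}]
\item (i) $Q(t,\cdot)$ is positive and even;
\item (ii) $Q(t,\cdot)$ is log-concave, i.e.\ $\mathfrak r(t,\cdot)$ is nondecreasing;
\item (iii) $H(t,\cdot)=\mathfrak r^2-\mathfrak r_x$ is nondecreasing on $(0,\infty)$.
\end{itemize}
At $t=0+$ on the first constancy interval, $Q(t,\cdot)$ is a multiple of the Gaussian $p_t$, for which $\mathfrak r=x/t$ and $H=x^2/t^2-1/t$, so (i)--(iii) hold.

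\textbf{Jumps.} At a jump of size $\delta>0$, \eqref{eq:zt-Q-jump} multiplies $Q$ by $e^{-\delta u}$. Hence $\mathfrak r$ becomes $\mathfrak r+\delta u_x$. This function is even-odd consistent (odd), and it remains nondecreasing because $u_{xx}>0$. The new $H$ equals
\[
H+2\delta\mathfrak r u_x+\delta^2u_x^2-\delta u_{xx}.
\]
On $(0,\infty)$ the three added pieces behave as follows:
\begin{itemize}[label={}]
\item $\mathfrak r u_x$ is a product of nonnegative nondecreasing functions, since $\mathfrak r,u_x$ are odd and nondecreasing, hence nonnegative on $(0,\infty)$;
\item $u_x^2$ is nondecreasing for the same reason;
\item $-\delta u_{xx}$ is nondecreasing because $u_{xxx}\le 0$ on $(0,\infty)$ by \eqref{eq:zt-D-negative}.
\end{itemize}
So (ii) and (iii) persist across jumps. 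For the step-function approximants I would use the step-function version of \eqref{eq:zt-D-negative}, obtained via the same regularization argument, or work directly with $\gamma_n$ and its $u_n$.

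\textbf{Constancy intervals.} Here $Q$ solves the heat equation \eqref{eq:zt-Q-heat}. Log-concavity is preserved under the heat flow by Prékopa--Leindler, since $Q(t)=P_{t-s}Q(s)$ is a convolution of log-concave functions. The main obstacle is preserving monotonicity of $H=Q_{xx}/Q$. Writing $\mathfrak r$ for the drift, one has $\partial_t\log Q=\tfrac12 H$, so
\[
\partial_t\mathfrak r=-\tfrac12 H_x
\quad\text{and}\quad
H_t=\tfrac12\bigl(H_{xx}-2\mathfrak r H_x\bigr)\ \text{(Doob $h$-transform form)}.
\]
Differentiating the second identity, $w:=H_x$ satisfies
\[
w_t=\tfrac12 w_{xx}-\mathfrak r w_x-\mathfrak r_x w,
\]
a linear parabolic equation with zeroth-order coefficient $-\mathfrak r_x$. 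Since $w$ is odd, $w(t,0)=0$, so on the half-line $(0,\infty)$ we have a Dirichlet condition at the boundary $x=0$. The parabolic minimum principle for linear equations (via $e^{-Kt}w$ and Gaussian growth control, valid because $\mathfrak r_x$ is locally bounded) then preserves $w\ge0$. Growth control at infinity follows from comparison with Gaussian tails of $Q$, since $Q$ equals $\rho e^{-\gamma u}$ with $u$ one-Lipschitz. If that growth control is delicate, a cleaner route is the representation $Q(t,\cdot)=P_{t-s}Q(s,\cdot)$. One then verifies monotonicity of $P_\tau f''/P_\tau f$ for even log-concave $f$ with $f''/f$ nondecreasing by writing $H(t,x)$ as a posterior expectation of $H(s,y)$ under the bridge kernel, $H(t,x)=\E[H(s,Y)\mid x]$. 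Monotonicity in $x$ is then a stochastic-ordering statement: the posterior of $|Y|$ given $x>0$ is stochastically increasing by a total-positivity argument using evenness and log-concavity of $f$.

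\textbf{Strictness and limit.} Strictness on a positive-length constancy interval follows from the strong maximum principle. If $w$ vanished at an interior point with $t\in(a,b)$, it would vanish identically on $(0,\infty)$, which contradicts the behavior of $H$ at infinity, where $H\sim\mathfrak r^2$ is unbounded in a nonconstant way. Equivalently, in the posterior representation, the stochastic order becomes strict for the nondegenerate Gaussian kernel. Finally, \eqref{eq:zt-Q-C2-convergence} passes positivity, evenness, log-concavity and monotonicity of $Q_{xx}/Q$ to the general $\gamma$ at continuity points. At jump points we take $Q(t-)$ limits and then apply the jump step once more. For strictness with general $\gamma$ on $(a,b)$, the heat-equation argument applies directly to $Q$ itself, since there $Q$ solves \eqref{eq:zt-Q-heat}.
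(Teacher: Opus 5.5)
Your proposal is correct, and its overall architecture matches the paper's proof. Both arguments reduce to step functions, start from the explicit Gaussian profile $H=x^2/t^2-1/t$, and use Pr\'ekopa--Leindler for log-concavity. Both use the same jump identity $H^+=H^-+2\delta\mathfrak r^-u_x+\delta^2u_x^2-\delta u_{xx}$ together with $u_{xxx}\le0$, and both pass to general $\gamma$ through the local $C^2$ convergence of $Q_n(t,\cdot)$ and $Q_n(t-,\cdot)$.

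The differences lie in the heat-flow step and in the strictness argument.

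\textbf{Heat-flow step.} Your primary route derives the Doob-transform equation $w_t=\tfrac12w_{xx}-\mathfrak r w_x-\mathfrak r_xw$ for $w=H_x$ and applies a half-line minimum principle with $w(t,0)=0$. The paper uses what you list as a fallback. It writes $H(t,x)=P_\tau(Q(r,\cdot)H(r,\cdot))(x)/P_\tau Q(r,\cdot)(x)$ and folds the Gaussian kernel into $\mathcal K_\tau(x,y)=p_\tau(x-y)+p_\tau(x+y)$ on $(0,\infty)^2$. It then checks $\partial_x\partial_y\log\mathcal K_\tau>0$, which gives strict $\mathrm{TP}_2$, and reads off the sign of $H(t,x_2)-H(t,x_1)$ from a symmetrized double integral. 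This step uses only evenness of $Q(r,\cdot)$; log-concavity is needed only at the jumps, not for the stochastic ordering as you suggest.

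What the $\mathrm{TP}_2$ route buys is a purely integral argument. Its only analytic input is integrability of $Q,Q_x,Q_{xx}$ against heat kernels.

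Your PDE route instead needs growth control of $H_x=(Q_{xxx}Q-Q_{xx}Q_x)/Q^2$ at infinity. That control must hold uniformly down to the left endpoint of the constancy interval, and you also need continuity of $w$ there. The Gaussian upper bound on $Q$ that you cite does not give this, because $Q$ sits in the denominator. You would need a matching lower bound on $Q$, or cumulant bounds for the posterior, which is $1/\tau$-strongly log-concave. These are obtainable, but this is exactly the work the folded-kernel argument avoids.

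\textbf{Strictness.} Your strong-maximum-principle argument is sound: a zero of $w$ at $(t_0,x_0)$ propagates backward in time and makes $H(t',\cdot)$ constant for $t'<t_0$. However, nonconstancy of $H(s,\cdot)$ must actually be proved. The paper proves it by observing that $Q_{xx}=cQ$ has no positive, even, integrable solution. That is cleaner than your claim that $H\sim\mathfrak r^2$ is unbounded, which would require showing $\mathfrak r\to\infty$, for instance from $Q\le c_0p_t$, convexity of $-\log Q$, and $0\le\mathfrak r_x\le1/(t-a)$. Your remark that strictness follows ``equivalently'' from a strict stochastic order also needs this nonconstancy. Strict $\mathrm{TP}_2$ yields strict increase of the posterior mean only of a function that is not constant.
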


\begin{proof}
\smallskip
\noindent\emph{Step 1: Positivity, evenness, and log-concavity for bounded step functions $\gamma$.}

We first assume that $\gamma$ is bounded and that there are $N\geq1$, $0=t_0<t_1<\cdots<t_N=1$, and $0\leq m_0\leq m_1\leq\cdots\leq m_{N-1}<\infty$ such that
\begin{equation}
 \gamma(t)=m_j,
 \qquad \forall t\in[t_j,t_{j+1}),\quad 0\leq j<N.
 \label{eq:zt-step-gamma-assumption}
\end{equation}
Under \eqref{eq:zt-step-gamma-assumption}, we verify the conclusions successively on $(0,t_1)$, on each $(t_j,t_{j+1})$, and at each jump point $t_j$. The inequality $\mathsf D\leq0$ needed at the jumps is \eqref{eq:zt-D-negative}, proved from the regularized terminal data in Lemma~\ref{lem:zt-regularized-terminal-data}. After completing the proof under \eqref{eq:zt-step-gamma-assumption}, we pass to an arbitrary $\gamma\in\cU$ using Lemma~\ref{lem:zt-bridge-formula}.

For every $t\in(0,t_1)$, equation \eqref{eq:zt-Q-heat} and the initial measure $Q(0,\mathrm d x)=c_0\delta_0(\mathrm d x)$ give
\begin{equation*}
 Q(t,x)
 \stackref{=}{\eqref{eq:zt-Q-heat}}
 c_0p_t(x)=\frac{c_0}{\sqrt{2\pi t}}e^{-x^2/(2t)},
 \qquad \forall x\in\R.
\end{equation*}
Consequently,
\begin{equation*}
 \partial_x\log Q(t,x)=-\frac xt
 \quad\text{and}\quad
 \partial_{xx}\log Q(t,x)=-\frac1t,
 \qquad \forall x\in\R,
\end{equation*}
and \eqref{eq:zt-r-H-identity} gives
\begin{equation*}
 \mathfrak r(t,x)=\frac xt\quad\text{and}\quad
 H(t,x)=\frac{x^2}{t^2}-\frac1t,
 \qquad \forall x\in\R.
\end{equation*}
Thus $Q(t,\cdot)$ is positive, even, and log-concave, and $H(t,\cdot)$ is increasing on $(0,\infty)$. If $\gamma=m$ on $(a,b)$ and $a<r<t<b$, then \eqref{eq:zt-Q-heat} and the definition of $P_s$ give
\begin{equation}
 Q(t,x)=P_{t-r}Q(r,\cdot)(x)
 =\int_\R p_{t-r}(x-y)Q(r,y)\dd y,
 \qquad \forall x\in\R.
 \label{eq:zt-Q-heat-convolution}
\end{equation}
Since $(x,y)\mapsto p_{t-r}(x-y)Q(r,y)$ is log-concave whenever $Q(r,\cdot)$ is log-concave, the Pr\'ekopa--Leindler theorem applied to \eqref{eq:zt-Q-heat-convolution} shows that $Q(t,\cdot)$ is log-concave. If $t\in(0,1)$ is a jump point of $\gamma$ and $\delta_t:=\gamma(t)-\gamma(t-)>0$, then \eqref{eq:zt-Q-jump} gives
\begin{equation*}
 \log Q(t,\cdot)=\log Q(t-,\cdot)-\delta_tu(t,\cdot),
\end{equation*}
which is concave because $\log Q(t-,\cdot)$ is concave and $u(t,\cdot)$ is convex. Both operations preserve evenness.

\smallskip
\noindent\emph{Step 2: Preservation of \eqref{eq:zt-H-monotone} by the heat equation.}

Fix $a<r<t<b$, assume that $H(r,\cdot)$ is nondecreasing on $(0,\infty)$, put $\tau:=t-r$, and define
\begin{align*}
 \widetilde Q(x)&:=Q(t,x)=P_\tau Q(r,\cdot)(x)\quad\text{and}\quad
 \widetilde{\mathfrak r}(x):=-\partial_x\log\widetilde Q(x),
 \qquad \forall x\in\R,\\
 \widetilde H(x)&:=\frac{\widetilde Q_{xx}(x)}{\widetilde Q(x)}=H(t,x)
 =\widetilde{\mathfrak r}(x)^2-\widetilde{\mathfrak r}_x(x),
 \qquad \forall x\in\R.
\end{align*}
Estimate \eqref{eq:zt-Q-Gaussian-derivative-bound}, with the explicit jump multiplier when necessary, makes $Q(r,\cdot),Q_x(r,\cdot),Q_{xx}(r,\cdot)$ absolutely integrable against every heat kernel below. We may therefore differentiate under the convolution. Since $Q_{xx}(r,y)=Q(r,y)H(r,y)$ for every $y\in\R$,
\begin{equation}
 \widetilde H(x)=\frac{(P_\tau Q(r,\cdot))_{xx}(x)}{P_\tau Q(r,\cdot)(x)}
 =\frac{P_\tau(Q_{xx}(r,\cdot))(x)}{P_\tau Q(r,\cdot)(x)}
 \stackref{=}{\eqref{eq:zt-r-H-identity}}
 \frac{P_\tau(Q(r,\cdot)H(r,\cdot))(x)}{P_\tau Q(r,\cdot)(x)},
 \qquad \forall x\in\R.
 \label{eq:zt-H-heat-ratio}
\end{equation}
For every even function $f$ for which the integral below is finite, folding the Gaussian convolution at zero gives
\begin{equation*}
 P_\tau f(x)=\int_0^\infty\mathcal K_\tau(x,y)f(y)\dd y,
 \qquad \forall x>0,
\end{equation*}
where
\begin{equation*}
 \mathcal K_\tau(x,y):=p_\tau(x-y)+p_\tau(x+y)
 =\frac{2}{\sqrt{2\pi\tau}}e^{-(x^2+y^2)/(2\tau)}\cosh(xy/\tau),
 \qquad \forall (x,y)\in(0,\infty)^2.
\end{equation*}
Its logarithmic mixed derivative is
\begin{equation*}
 \partial_x\partial_y\log\mathcal K_\tau(x,y)
 =\frac1\tau\tanh(xy/\tau)+\frac{xy}{\tau^2}\sech^2(xy/\tau)>0
\end{equation*}
for $x,y>0$. Recall that a positive kernel $\mathcal K$ on $(0,\infty)^2$ is strictly totally positive of order two, abbreviated strictly TP$_2$, if
\begin{equation}
 \mathcal K(x_1,y_1)\mathcal K(x_2,y_2)
 -\mathcal K(x_1,y_2)\mathcal K(x_2,y_1)>0
 \label{eq:zt-strict-TP2-definition}
\end{equation}
whenever $0<x_1<x_2$ and $0<y_1<y_2$; see \cite{Karlin1968} for the general theory of totally positive kernels. In the present case,
\begin{align*}
 &\log\frac{\mathcal K_\tau(x_1,y_1)\mathcal K_\tau(x_2,y_2)}
 {\mathcal K_\tau(x_1,y_2)\mathcal K_\tau(x_2,y_1)}=\int_{x_1}^{x_2}\int_{y_1}^{y_2}
 \partial_x\partial_y\log\mathcal K_\tau(x,y)\dd y\dd x>0.
\end{align*}
Exponentiating proves \eqref{eq:zt-strict-TP2-definition} for $\mathcal K_\tau$. If $0<x_1<x_2$, then \eqref{eq:zt-H-heat-ratio} gives
\begin{align}
 &P_\tau Q(r,\cdot)(x_1)P_\tau Q(r,\cdot)(x_2)
 [\widetilde H(x_2)-\widetilde H(x_1)]=\int_{0<y_1<y_2}Q(r,y_1)Q(r,y_2)
 [H(r,y_2)-H(r,y_1)]\notag\\
 &\qquad\quad\times\left[
 \mathcal K_\tau(x_2,y_2)\mathcal K_\tau(x_1,y_1)
 -\mathcal K_\tau(x_2,y_1)\mathcal K_\tau(x_1,y_2)
 \right]\dd y_1\dd y_2.                                      \label{eq:zt-TP2-ratio}
\end{align}
The two bracketed factors in the integrand in \eqref{eq:zt-TP2-ratio} are nonnegative by the monotonicity of $H(r,\cdot)$ and \eqref{eq:zt-strict-TP2-definition}, respectively. Therefore $H(t,\cdot)=\widetilde H$ is nondecreasing on $(0,\infty)$, and it is strictly increasing in the order sense unless $H(r,\cdot)$ is constant.

\smallskip
\noindent\emph{Step 3: Preservation of \eqref{eq:zt-H-monotone} at every jump of $\gamma$.}

To prove that \eqref{eq:zt-H-monotone} is preserved at a jump point $t\in(0,1)$ of $\gamma$, suppose that $\delta_t:=\gamma(t)-\gamma(t-)>0$ and define $Q_t^-,Q_t^+:\R\to(0,\infty)$ by
\begin{equation*}
 Q_t^-(x):=Q(t-,x)
 \quad\text{and}\quad
 Q_t^+(x):=Q(t,x),
 \qquad \forall x\in\R.
\end{equation*}
Define $\mathfrak r_t^\pm,H_t^\pm:\R\to\R$ by
\begin{equation*}
 \mathfrak r_t^\pm(x):=-\partial_x\log Q_t^\pm(x)\quad\text{and}\quad
 H_t^\pm(x):=\frac{(Q_t^\pm)_{xx}(x)}{Q_t^\pm(x)},
 \qquad \forall x\in\R.
\end{equation*}
Thus $\mathfrak r_t^-$ and $H_t^-$ are obtained from $Q(t-,\cdot)$ by the same definitions as $\mathfrak r(t,\cdot)$ and $H(t,\cdot)$, while $\mathfrak r_t^+=\mathfrak r(t,\cdot)$ and $H_t^+=H(t,\cdot)$. Equations \eqref{eq:zt-BCD-shorthand} and \eqref{eq:zt-Q-jump} give
\begin{equation*}
 \mathfrak r_t^+(x)
 \stackref{=}{\eqref{eq:zt-Q-jump}}
 \mathfrak r_t^-(x)+\delta_t\mathsf B(t,x),
 \qquad \forall x\in\R,
\end{equation*}
and therefore
\begin{align}
 H_t^+(x)
 &=H_t^-(x)+2\delta_t\mathfrak r_t^-(x)\mathsf B(t,x)
 +\delta_t^2\mathsf B(t,x)^2-\delta_t\mathsf C(t,x),            \label{eq:zt-H-jump}\\
 (H_t^+)_x(x)
 &=(H_t^-)_x(x)+2\delta_t\bigl((\mathfrak r_t^-)_x(x)\mathsf B(t,x)
 +\mathfrak r_t^-(x)\mathsf C(t,x)\bigr)\notag\\
 &\qquad+2\delta_t^2\mathsf B(t,x)\mathsf C(t,x)-\delta_t\mathsf D(t,x),
 \qquad \forall x\in\R.                                      \label{eq:zt-Hx-jump}
\end{align}
For every $x>0$, evenness and log-concavity give $\mathfrak r_t^-(x),(\mathfrak r_t^-)_x(x)\geq0$, while evenness and convexity give $\mathsf B(t,x),\mathsf C(t,x)\geq0$. Lemma~\ref{lem:zt-regularized-terminal-data} gives \eqref{eq:zt-D-negative}. Consequently every term on the right side of \eqref{eq:zt-Hx-jump} is nonnegative, so each jump in \eqref{eq:zt-step-gamma-assumption} preserves \eqref{eq:zt-H-monotone}. This completes the proof that $Q(t,\cdot)$ is positive, even, and log-concave and that $H(t,\cdot)$ is nondecreasing on $(0,\infty)$ under \eqref{eq:zt-step-gamma-assumption}.

\smallskip
\noindent\emph{Step 4: Passage from \eqref{eq:zt-step-gamma-assumption} to arbitrary $\gamma\in\cU$.}

Now let $\gamma\in\cU$ be arbitrary, and choose the step functions $\gamma_n$ in Lemma~\ref{lem:zt-bridge-formula}. For every continuity point $t\in(0,1)$ of $\gamma$, equation \eqref{eq:zt-Q-C2-convergence} gives locally uniform convergence of $Q_n(t,\cdot)$ and $Q_{n,xx}(t,\cdot)$. The positivity of $Q(t,\cdot)$ follows from its definition in terms of $\rho_t$, while evenness and log-concavity pass to the limit. Moreover, for $0<x_1<x_2$,
\begin{equation*}
 H(t,x_1)=\lim_{n\to\infty}H_n(t,x_1)
 \stackref{\leq}{\text{step case}}
 \lim_{n\to\infty}H_n(t,x_2)=H(t,x_2).
\end{equation*}
If $t\in(0,1)$ is a jump point of $\gamma$, the same argument first gives these conclusions for $Q(t-,\cdot)$. Equation \eqref{eq:zt-Q-jump} shows that $Q(t,\cdot)$ is positive and even and that $\log Q(t,\cdot)=\log Q(t-,\cdot)-\delta_tu(t,\cdot)$ is concave. Since $Q(t-,\cdot)$ is smooth and positive, $(H_t^-)_x(x)\geq0$ for every $x>0$; equations \eqref{eq:zt-H-jump}--\eqref{eq:zt-Hx-jump} and \eqref{eq:zt-D-negative} give $(H_t^+)_x(x)\geq0$ for every $x>0$.

\smallskip
\noindent\emph{Step 5: Strict increase of $H(t,\cdot)$ when $\gamma$ is constant on $(a,b)$.}

Finally, suppose that $\gamma$ is constant on $(a,b)$ and let $a<s<t<b$. Then
\begin{equation*}
 Q(t,x)=P_{t-s}Q(s,\cdot)(x),
 \qquad \forall x\in\R.
\end{equation*}
The function $H(s,\cdot)$ cannot be constant. Indeed, if $H(s,x)=c$ for every $x\in\R$, then $\partial_{xx}Q(s,x)=cQ(s,x)$ for every $x\in\R$. If $c>0$, every positive even solution grows like a hyperbolic cosine; if $c=0$, it is affine; and if $c<0$, every nonzero solution changes sign. None is positive, even, and integrable. Since $H(s,\cdot)$ is continuous and nonconstant, $H(s,y_2)-H(s,y_1)>0$ on a subset of $\{0<y_1<y_2\}$ with positive two-dimensional Lebesgue measure. Applying \eqref{eq:zt-TP2-ratio} with $r=s$ and $\tau=t-s$ gives
\begin{equation*}
 H(t,x_2)-H(t,x_1)
 \stackref{>}{\eqref{eq:zt-TP2-ratio}}0,
 \qquad \forall 0<x_1<x_2.
\end{equation*}
This proves the final claim.
\end{proof}

We continue to use the shorthand in \eqref{eq:zt-BCD-shorthand}. The next lemma gives the It\^o identities for the original right-continuous function $\gamma$, without presupposing its continuity.

\begin{lemma}[It\^o identities for nondecreasing $\gamma$]
\label{lem:zt-Ito-identities-general}
Let $u:=u^{|\cdot|,\gamma}$ solve \eqref{eq:zt-general-PDE}, let $X$ solve \eqref{eq:zt-optimal-SDE}, and define $\Gamma$ by \eqref{eq:zt-Gamma-h-G}. The function $\Gamma$ belongs to $C^1([0,1))$ and
\begin{equation}
 \Gamma'(t)=\E\,\mathsf C(t,X_t)^2,
 \qquad 0\leq t<1.
 \label{eq:zt-Gamma-prime}
\end{equation}
The function on the right side of \eqref{eq:zt-Gamma-prime} is locally absolutely continuous, and, for $0\leq s\leq t<1$,
\begin{equation}
 \Gamma'(t)-\Gamma'(s)
 =\int_s^t\E\left[\mathsf D(r,X_r)^2
             -2\gamma(r)\mathsf C(r,X_r)^3\right]\dd r.
 \label{eq:zt-Gamma-prime-integral}
\end{equation}
Consequently its derivative equals the displayed integrand for almost every $t<1$.  If $\gamma=m$ on an open interval, then $\Gamma$ is twice continuously differentiable there and
\begin{equation}
 \Gamma''(t)=\E\left[\mathsf D(t,X_t)^2-2m\mathsf C(t,X_t)^3\right].
 \label{eq:zt-Gamma-second-x}
\end{equation}
\end{lemma}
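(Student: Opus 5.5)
The plan is to prove both identities first for bounded nondecreasing step functions, by It\^o's formula on each constancy interval, and then pass to general $\gamma\in\cU$ through the approximation of Lemma~\ref{lem:zt-bridge-formula} and the stability of Lemma~\ref{lem:zt-parabolic-stability}.

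\emph{The constant-$\gamma$ case.} On an open interval where $\gamma=m$, the PDE is classical. Differentiating it once and twice in $x$ gives equations for $\mathsf B$ and $\mathsf C$. It\^o's formula along \eqref{eq:zt-optimal-SDE} then yields
\begin{equation*}
\dd\mathsf B(t,X_t)=\mathsf C\dd W_t
\quad\text{and}\quad
\dd\mathsf C(t,X_t)=-m\mathsf C^2\dd t+\mathsf D\dd W_t.
\end{equation*}
The drift cancellation in the second equation is the standard one: the $x$-derivative of $\tfrac m2(\mathsf B^2)_x$ leaves $m\mathsf C^2+m\mathsf B\mathsf D$, and the SDE drift absorbs the $\mathsf B\mathsf D$ term. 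Applying It\^o once more gives
\begin{equation*}
\dd\,\mathsf B^2=\mathsf C^2\dd t+\text{mart.}
\quad\text{and}\quad
\dd\,\mathsf C^2=\bigl(\mathsf D^2-2m\mathsf C^3\bigr)\dd t+\text{mart.}
\end{equation*}
By \eqref{eq:zt-global-parabolic-derivative-bound} with $k=2,3$, and since $|\mathsf B|<1$, the stochastic integrands are bounded on $[0,T]$. The martingales therefore have mean zero. Continuity of $t\mapsto\E\mathsf C(t,X_t)^2$ and of the integrand, which follow from the joint continuity in Lemma~\ref{lem:zt-parabolic-stability} and the $L^2$-continuity of $X$ exactly as in the proof of \eqref{eq:zt-gap-endpoint-derivatives}, upgrade this to \eqref{eq:zt-Gamma-second-x} and to $C^2$ regularity there.

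\emph{Step functions.} For a bounded step function $\gamma_n$, the processes $u_n(t,\cdot)$, its $x$-derivatives, and $X^n$ are continuous across the jump times; only the coefficient changes. So $\Gamma_n(t)-\Gamma_n(s)=\int_s^t\E\mathsf C_n^2\dd r$ and \eqref{eq:zt-Gamma-prime-integral} hold across jumps by adding up the interval contributions. The integrand $\E\mathsf C_n(r,X^n_r)^2$ is continuous in $r$ even across jumps, so $\Gamma_n\in C^1$. The second integrand has only jump discontinuities, which do not affect the integral.

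\emph{Passage to the limit.} Take $\gamma_n\to\gamma$ in $L^1$ with $\sup_n\|\gamma_n\|_{L^\infty(0,T')}<\infty$ for each $T'<1$; for instance, use monotone step approximations from the proof of Lemma~\ref{lem:zt-bridge-formula}. On $[0,T]$, Lemma~\ref{lem:zt-parabolic-stability} gives:
\begin{itemize}
\item uniform convergence of $\mathsf B_n$;
\item locally uniform convergence of $\mathsf C_n,\mathsf D_n$;
\item uniform bounds on all of $\mathsf B_n,\mathsf C_n,\mathsf D_n$.
\end{itemize}
The drifts $\gamma_n\mathsf B_n$ are uniformly bounded and converge in $L^1(\dd t)$ uniformly in $x$. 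With the same Brownian motion and a Gr\"onwall estimate, this gives $\sup_{t\le T}\E|X^n_t-X_t|\to0$. Here I use the $x$-Lipschitz bound on $\mathsf B$ from the uniform bound on $\mathsf C$, which also makes the SDE strongly well posed.

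Combined with tightness, which comes from the bounded drift, dominated convergence gives $\E F_n(r,X^n_r)\to\E F(r,X_r)$ for each of the integrands involved, for every $r\le T$. The integrands are uniformly bounded, so we can pass to the limit inside the time integrals. For the term $\gamma_n\mathsf C_n^3$, use the $L^1$ convergence of $\gamma_n$ together with boundedness. This proves \eqref{eq:zt-Gamma-prime} in integrated form and \eqref{eq:zt-Gamma-prime-integral}. Since the right side of \eqref{eq:zt-Gamma-prime-integral} is the integral of a bounded function, $t\mapsto\E\mathsf C(t,X_t)^2$ is locally absolutely (indeed Lipschitz) continuous. Hence $\Gamma\in C^1([0,1))$ with derivative \eqref{eq:zt-Gamma-prime}, and the a.e.\ derivative statement follows from the Lebesgue differentiation theorem.

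The main obstacle I expect is the convergence $X^n\to X$ together with the convergence of the evaluated derivatives: $\mathsf C_n,\mathsf D_n$ converge only locally uniformly. I would handle this by truncating to $|x|\le R$ and controlling the tail $\PP(|X^n_r|>R)$ uniformly in $n$ through the bounded drift.
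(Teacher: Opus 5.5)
Your argument is correct, but it obtains the key semimartingale input by a different route from the paper.

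\textbf{What the paper does.} It does not approximate. It takes the identities
\[
\dd\mathsf B(r,X_r)=\mathsf C(r,X_r)\dd W_r,
\qquad
\dd\mathsf C(r,X_r)=-\gamma(r)\mathsf C(r,X_r)^2\dd r+\mathsf D(r,X_r)\dd W_r
\]
directly from \cite[Lemma~3]{ChenHandschyLerman}, where they already hold for every $\gamma\in\cU$ and terminal datum $|x|$. It then runs your It\^o computation for $\mathsf B^2$ and $\mathsf C^2$ once. The bounds \eqref{eq:zt-global-parabolic-derivative-bound} make the martingales true, and joint continuity of $\mathsf C$ gives continuity of $r\mapsto\E\,\mathsf C(r,X_r)^2$.

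\textbf{What you do instead.} You use It\^o's formula only where the PDE is classical in time, namely on the constancy intervals of a step function. You glue across jumps using continuity of $u_n$ and its spatial derivatives. You then transfer the integrated expectation identities to $\gamma$ via Lemma~\ref{lem:zt-parabolic-stability} and a strong-solution Gr\"onwall comparison.

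\textbf{What each route buys.} Your version is self-contained relative to the paper's own stability lemma. It never needs an It\^o formula for a function that is only weakly differentiable in time. It also yields Lipschitz continuity of $\Gamma'$ directly. The cost is the extra diffusion-convergence step. The paper's proof is shorter because that difficulty is absorbed into the cited lemma.

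\textbf{On your anticipated obstacle.} It is milder than you expect. With $M_T:=\sup_{[0,T]\times\R}|\mathsf C|$, your Gr\"onwall estimate is actually pathwise and deterministic:
\[
\sup_{t\le T}|X^n_t-X_t|\le\bigl(\|\gamma_n-\gamma\|_{L^1(0,T)}+T\gamma(T)\|\mathsf B_n-\mathsf B\|_{L^\infty([0,T]\times\R)}\bigr)e^{\gamma(T)M_TT}.
\]
Combine this with local uniform convergence of $\mathsf C_n,\mathsf D_n$ and continuity of $\mathsf C,\mathsf D$. Each path stays in a compact set, so you get almost sure convergence of the evaluated integrands directly. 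The uniform bounds then finish by dominated convergence, with no separate tail truncation needed.
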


\begin{proof}
The semimartingale identities \cite[Lemma~3, equations~(14)--(15)]{ChenHandschyLerman} apply to every $\gamma\in\cU$ and to the terminal datum $|x|$. Under the identifications $\Phi_\gamma=u$, $X_\gamma=X$, and $\xi''\equiv1$, they give
\begin{equation}
 \dd\mathsf B(r,X_r)=\mathsf C(r,X_r)\dd W_r
 \quad\text{and}\quad
 \dd\mathsf C(r,X_r)
 =-\gamma(r)\mathsf C(r,X_r)^2\dd r
 +\mathsf D(r,X_r)\dd W_r.                                      \label{eq:zt-derivative-SDEs}
\end{equation}
Fix $T<1$. The bounds in \eqref{eq:zt-global-parabolic-derivative-bound} imply that all stochastic integrals obtained from \eqref{eq:zt-derivative-SDEs} on $[0,T]$ are square-integrable martingales. It\^o's formula for $\mathsf B(r,X_r)^2$ and $\mathsf C(r,X_r)^2$ therefore gives, for $0\leq s\leq t\leq T$,
\begin{align}
 \Gamma(t)-\Gamma(s)
 &=\int_s^t\E\,\mathsf C(r,X_r)^2\dd r,                               \label{eq:zt-general-Gamma-integral}\\
 \E\,\mathsf C(t,X_t)^2-\E\,\mathsf C(s,X_s)^2
 &=\int_s^t\E\left[\mathsf D(r,X_r)^2
       -2\gamma(r)\mathsf C(r,X_r)^3\right]\dd r.                   \label{eq:zt-general-C-integral}
\end{align}
Lemma~\ref{lem:zt-parabolic-stability} shows that $\mathsf C$ is jointly continuous and bounded on $[0,T]\times\R$. Since $X$ has continuous paths, the map $r\mapsto\E\,\mathsf C(r,X_r)^2$ is continuous. Hence \eqref{eq:zt-general-Gamma-integral} gives $\Gamma\in C^1([0,T])$ and \eqref{eq:zt-Gamma-prime}. Equation \eqref{eq:zt-general-C-integral} then shows that this derivative is absolutely continuous and proves \eqref{eq:zt-Gamma-prime-integral}. Since $T<1$ was arbitrary, these conclusions hold on $[0,1)$.

If $\gamma(t)=m$ for every $t$ in an open interval $I$, then the map
\begin{equation*}
 t\longmapsto\E\left[\mathsf D(t,X_t)^2-2m\mathsf C(t,X_t)^3\right]
\end{equation*}
is continuous on $I$. Differentiating \eqref{eq:zt-Gamma-prime-integral} on $I$ proves \eqref{eq:zt-Gamma-second-x}.
\end{proof}

\subsection{\texorpdfstring{The inequalities for $\mathsf K$ and $\mathsf J$}{The inequalities for K and J}}
\label{sec:zt-KJ}

For fixed $t<1$, equation \eqref{eq:zt-BCD-shorthand} and Lemma~\ref{lem:zt-PDE-facts} show that $x\mapsto\mathsf B(t,x)$ is a smooth increasing bijection from $\R$ onto $(-1,1)$. For every $B\in(-1,1)$, define $x(t,B)$ and $\mathsf c(t,B)$ by
\begin{equation}
 \mathsf B(t,x(t,B)):=B\quad\text{and}\quad \mathsf c(t,B):=\mathsf C(t,x(t,B)).
 \label{eq:zt-slope-curvature}
\end{equation}
Thus $\mathsf C(t,x)$ is a function of $(t,x)\in[0,1)\times\R$, whereas $\mathsf c(t,B)$ is a function of $(t,B)\in[0,1)\times(-1,1)$. Suppose that $\gamma=m$ on an open interval $I\subset(0,1)$. For every $(t,B)\in I\times(-1,1)$, define
\begin{equation}
 \mathsf z(t,B):=-\frac12\mathsf c_B(t,B),\qquad \mathsf K(t,B):=\frac{\mathsf z(t,B)}B-m
 \quad\text{and}\quad \mathsf J(t,B):=\mathsf K(t,B)+B\mathsf K_B(t,B).
 \label{eq:zt-slope-variables}
\end{equation}
Here $\mathsf c_B:=\partial_B\mathsf c$ and $\mathsf K_B:=\partial_B\mathsf K$; a subscript $t$ below means differentiation in $t$ with $B$ fixed. The quotients in \eqref{eq:zt-slope-variables} have smooth even extensions through $B=0$.

We first isolate the positive-temperature Cole--Hopf inequalities that will be rescaled below. For $a>0$, let
\begin{equation}
 \mathcal T_{a,r}f(x):=\frac1a\log\E
 \exp\{a f(x+\sqrt r Z)\},\qquad Z\sim N(0,1),
 \label{eq:zt-Cole-Hopf-operator}
\end{equation}
and define $\mathcal T_{0,r}:=P_r$.

For every smooth, even, strictly convex function $f$ such that $f':\R\to(-1,1)$ is a bijection, define
\begin{equation}
 \mathsf B_f(x):=f'(x),\qquad x_f(B):=\mathsf B_f^{-1}(B),\qquad \mathsf c_f(B):=f''(x_f(B))
 \quad\text{and}\quad \mathsf z_f(B):=-\frac12\partial_B\mathsf c_f(B).
 \label{eq:zt-finite-slope-variables}
\end{equation}
For $a>0$, also define
\begin{equation*}
 \mathsf K_{f,a}(B):=\frac{\mathsf z_f(B)}B-a
 \quad\text{and}\quad \mathsf J_{f,a}(B):=\mathsf K_{f,a}(B)+B\partial_B\mathsf K_{f,a}(B),
 \qquad \forall B\in(-1,1),
\end{equation*}
using the smooth extensions at $B=0$.

\begin{remark}[Relation to Lopatto]
\label{rem:zt-finite-KJ}
The inequalities $\mathsf K,\mathsf K_B,\mathsf J,\mathsf J_B\geq0$ and $\mathsf c\mathsf J_B\leq3\mathsf z\mathsf J$ below originate in \cite[Proposition~3.6]{Lopatto2026v2}. For completeness, Appendix~\ref{app:finite-KJ} gives a streamlined proof containing all regularity, endpoint, and maximum-principle inputs used here. The subsequent scaling to $|x|$, passage to a general zero-temperature order parameter, properties of $Q:=\rho e^{-\gamma u}$, and support argument remain in the main text.

A direct initialization from $|x|$ would avoid the scaling through $\lambda^{-1}\log\cosh(\lambda x)$, but it does not shorten the argument. At terminal time, $u_x(1,x)=\operatorname{sign}(x)$ and $u_{xx}(1,\cdot)=2\delta_0$, so $x\mapsto u_x(1,x)$ has no inverse on $(-1,1)$. After the first positive Cole--Hopf evolution one obtains an explicit expression involving two Gaussian distribution functions; verifying $\mathsf K_B,\mathsf J_B\geq0$ and $\mathsf c\mathsf J_B\leq3\mathsf z\mathsf J$ then requires additional inequalities for their ratios. Starting instead from the smooth function $\log\cosh$ avoids this singular initialization.
\end{remark}

\begin{proposition}[Finite Cole--Hopf inequalities]
\label{prop:zt-finite-KJ}
Begin with $(f,a)=(\log\cosh,1)$.  Perform finitely many operations of either type:
\begin{enumerate}[label=\textup{(\roman*)},leftmargin=2.2em]
\item replace $f$ by $\mathcal T_{a,r}f$ for $r\geq0$, leaving $a$ fixed;
\item replace the parameter $a$ by some $b\in(0,a)$, leaving $f$ fixed.
\end{enumerate}
For every resulting pair $(f,a)$, use the functions in \eqref{eq:zt-finite-slope-variables} and the corresponding $\mathsf K_{f,a}$ and $\mathsf J_{f,a}$. Then, for every $0\leq B<1$,
\begin{equation*}
 \mathsf K_{f,a}(B),\partial_B\mathsf K_{f,a}(B),\mathsf J_{f,a}(B),\partial_B\mathsf J_{f,a}(B)\geq0
 \quad\text{and}\quad \mathsf c_f(B)\partial_B\mathsf J_{f,a}(B)\leq3\mathsf z_f(B)\mathsf J_{f,a}(B).
\end{equation*}
\end{proposition}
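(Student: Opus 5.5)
The plan is an induction on the number of operations, with the inequalities as the invariant. Operation (ii) is trivial to handle. Operation (i) is handled by a parabolic maximum principle in the slope variable $B$.

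\emph{Base case.} For $f=\log\cosh$ we have $\mathsf B_f=\tanh$ and $\mathsf c_f(B)=1-B^2$. Hence $\mathsf z_f(B)=B$, $\mathsf K_{f,1}\equiv0$, and $\mathsf J_{f,1}\equiv0$, so all the inequalities hold with equality at the start.

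\emph{Operation (ii).} Replacing $a$ by $b<a$ adds the positive constant $a-b$ to $\mathsf K$ and to $\mathsf J$ and leaves $\partial_B\mathsf K$ and $\partial_B\mathsf J$ unchanged. So $\mathsf K,\mathsf J$ increase, $\mathsf K_B,\mathsf J_B\geq0$ persist, and $\mathsf c\mathsf J_B\leq3\mathsf z\mathsf J$ persists because $\mathsf z\geq0$ on $[0,1)$. Indeed $\mathsf z=B(\mathsf K+a)\geq0$ follows from $\mathsf K\geq0$.

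\emph{Operation (i).} The main work is here. Write $v(r,\cdot):=\mathcal T_{a,r}f$; it solves the forward Cole--Hopf equation $\partial_rv=\frac12(v_{xx}+av_x^2)$. I would pass to the hodograph variables: set $B=v_x$, take $\mathsf c(r,B)=v_{xx}$ at the point $x$ with $v_x=B$, and compute the evolution of $\mathsf c$ at fixed $B$. I expect an equation of the form $\partial_r\mathsf c=\frac12\mathsf c^2\mathsf c_{BB}-a\,\mathsf c^2\,(\cdots)$ that closes in $(r,B)$. Differentiating once gives a closed equation for $\mathsf z$, and then for $\mathsf K=\mathsf z/B-a$, $\mathsf J=\partial_B(B\mathsf K)$, and $\mathsf J_B$. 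Each is a linear parabolic equation with diffusion coefficient $\frac12\mathsf c^2$, plus first-order terms and reaction terms.

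The strategy is to show that the cone $\{\mathsf K,\mathsf K_B,\mathsf J,\mathsf J_B\geq0,\ 3\mathsf z\mathsf J-\mathsf c\mathsf J_B\geq0\}$ is forward invariant. I would check that, at a first touching point of each boundary of the cone, the reaction term has the right sign given the other inequalities. Evenness gives the boundary condition at $B=0$, via the smooth even extensions.

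\emph{Main obstacle.} The hard part is twofold. First, the last inequality, $\mathsf c\mathsf J_B\leq3\mathsf z\mathsf J$, is nonlinear, and its evolution couples to $\mathsf J_{BB}$. I would first try writing it as $\partial_B\log\mathsf J\leq3\mathsf z/\mathsf c=-\frac32\partial_B\log\mathsf c$, that is, as monotonicity of $\mathsf c^{3/2}\mathsf J$. One then checks that the maximum principle applies to this single quantity, with $\mathsf J_B\geq0$ and $\mathsf K_B\geq0$ feeding the reaction signs. Second, the endpoint $B\to1$ is degenerate, since $\mathsf c\to0$ there. Here I would use the explicit asymptotics of $v_x$ as $x\to\infty$, which are inherited from $\log\cosh$ through the Gaussian convolution, to show the relevant quantities have limits of the correct sign. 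This legitimizes the maximum principle on $[0,1)$, after approximating by compact $B$-intervals $[0,1-\epsilon]$.

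Finally, the $r\geq0$ flow is continuous in $r$ with all derivatives in $x$, so the invariance over a full step $r$ follows from the infinitesimal invariance.
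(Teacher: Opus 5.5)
Your plan is essentially the paper's proof. It has the same ingredients: induction over the cascade, and the constant shift $\mathsf K,\mathsf J\mapsto\mathsf K+a-b,\ \mathsf J+a-b$ for operation (ii). For each Cole--Hopf step, it uses a degenerate comparison principle in the slope variable, with diffusion coefficient $\frac12\mathsf c^2$ coming from $\mathsf c_r=-\mathsf c^2\mathsf J$, applied in turn to $\mathsf J$, to $\mathsf J_B$, and to $(\mathsf c^{3/2}\mathsf J)_B$, with $\mathsf K,\mathsf K_B$ recovered from $B\mathsf K=\int_0^B\mathsf J$.

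The one ingredient you leave unspecified is the control at $B=1$. The paper expands $\mathsf c=2\delta+c_2\delta^2+c_3\delta^3+O(\delta^4)$ with $\delta=1-B$. It then shows that $\mathsf J(r,1)=-a-c_2$ and $\mathsf J_B(r,1)=3c_3$ obey the closed, sign-preserving ODEs $j_\infty'=4j_\infty$ and $\kappa_\infty'=12\kappa_\infty+12(j_\infty+a)j_\infty$, while $(\mathsf c^{3/2}\mathsf J)_B$ vanishes at $B=1$. This lets the comparison be run directly on the closed strip, with no truncation to $[0,1-\epsilon]$.
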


\begin{proof}
Appendix~\ref{app:finite-KJ} proves the stronger final inequality $\partial_B(\mathsf c_f^{3/2}\mathsf J_{f,a})\leq0$. Since $\partial_B\mathsf c_f=-2\mathsf z_f$, the two forms are identical:
\begin{equation*}
 \partial_B(\mathsf c_f^{3/2}\mathsf J_{f,a})
 =\mathsf c_f^{1/2}(\mathsf c_f\partial_B\mathsf J_{f,a}-3\mathsf z_f\mathsf J_{f,a}).
\end{equation*}
\end{proof}

\begin{proposition}[Zero-temperature inequalities for $\mathsf K$ and $\mathsf J$]
\label{prop:zt-zero-temp-KJ}
Let $u:=u^{|\cdot|,\gamma}$ solve \eqref{eq:zt-general-PDE}, and define $\mathsf B,\mathsf C,\mathsf D$ by \eqref{eq:zt-BCD-shorthand}. On every open interval on which $\gamma=m$ is constant, the functions in \eqref{eq:zt-slope-curvature}--\eqref{eq:zt-slope-variables} satisfy
\begin{equation}
 \mathsf K\geq0,\quad \mathsf K_B\geq0,\quad \mathsf J\geq0,\quad \mathsf J_B\geq0,\quad
 \mathsf c\mathsf J_B\leq3\mathsf z\mathsf J,\qquad 0\leq B<1.
 \label{eq:zt-zero-temp-five}
\end{equation}
\end{proposition}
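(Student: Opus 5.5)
The plan is to deduce \eqref{eq:zt-zero-temp-five} from Proposition~\ref{prop:zt-finite-KJ} by a scaling and approximation argument. The bridge is the regularization $u_\lambda=u^{h_\lambda,\gamma\wedge\lambda}$ of Lemma~\ref{lem:zt-regularized-terminal-data}, together with the locally uniform $C^j$ convergence \eqref{eq:zt-lambda-smooth-convergence}.

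\emph{Step 1: step-function coefficients and rescaling.} First replace $\gamma\wedge\lambda$ by a nondecreasing step function $\alpha$ with values in $[0,\lambda]$ that equals $m$ on the given interval $I$ (possible once $\lambda>m$). For such $\alpha$, the Hopf--Cole solution on a constancy interval $[t_j,t_{j+1})$ with value $m_j>0$ is
\[
 u(t_j,\cdot)=\frac1{m_j}\log\E\exp\{m_ju(t_{j+1},\cdot+\sqrt{t_{j+1}-t_j}Z)\}.
\]
Set $f_\lambda(y):=\lambda u(t,y/\lambda)$ and $a=m_j/\lambda\in(0,1]$. Then this step becomes $\mathcal T_{a,r}$ with $r=\lambda^2(t_{j+1}-t_j)$, and the terminal datum $h_\lambda$ becomes $\log\cosh$. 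Moving backward from time $1$, the values $m_j/\lambda$ decrease, which is exactly operation (ii); a zero value is $P_r=\mathcal T_{0,r}$, reached as a limit $b\downarrow0$.

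Under this rescaling, $\mathsf B$ is unchanged, $\mathsf c$ is multiplied by $1/\lambda$, and $\mathsf z=-\frac12\mathsf c_B$ is multiplied by $1/\lambda$. Hence $\mathsf K_{f,a}=\lambda^{-1}(\mathsf z/B-m)=\lambda^{-1}\mathsf K$, and likewise $\mathsf J_{f,a}=\lambda^{-1}\mathsf J$. The inequalities in Proposition~\ref{prop:zt-finite-KJ} are homogeneous: the first four are linear, and $\mathsf c\mathsf J_B\le3\mathsf z\mathsf J$ scales by $\lambda^{-2}$ on both sides. So they transfer verbatim to $u^{h_\lambda,\alpha}$ at every $t\in I$ with $B\in[0,1)$.

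\emph{Step 2: removing the step approximation.} Approximate $\gamma\wedge\lambda$ in $L^1$ by such step functions, keeping the value $m$ on a slightly shrunken compact subinterval of $I$. Lemma~\ref{lem:zt-parabolic-stability} (bounded coefficients, fixed terminal datum) gives locally uniform convergence of all $x$-derivatives on $[0,T]\times\R$ for $T<1$.

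\emph{Step 3: passage $\lambda\to\infty$.} Then let $\lambda\to\infty$ using \eqref{eq:zt-lambda-smooth-convergence}.

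\emph{Step 4: main obstacle.} The delicate point is converting $x$-derivative convergence into convergence of $\mathsf c,\mathsf z,\mathsf K,\mathsf J,\mathsf J_B$ as functions of $B$, locally uniformly on $[0,1)$.

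\begin{itemize}
\item \emph{Inverting $x(t,B)$.} Since $\mathsf C>0$ (Lemma~\ref{lem:zt-PDE-facts}) and $\mathsf B$ is a bijection onto $(-1,1)$, for $B$ in a compact subset of $[0,1)$ the points $x(t,B)$ stay in a compact set. On that set $\mathsf C$ is bounded below uniformly for large $n,\lambda$, by local uniform convergence to a positive limit. The implicit function theorem then gives local uniform convergence of $x(t,B)$ and of the $B$-derivatives.
\item \emph{Derivatives in $B$.} By the chain rule $\partial_B=\mathsf C^{-1}\partial_x$, so $\mathsf J_B$ involves $x$-derivatives of $u$ up to order five. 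These converge by \eqref{eq:zt-parabolic-Ck-stability}.
\item \emph{Behaviour at $B=0$.} The quotient $\mathsf z/B$ is handled through its smooth even extension, written as $\int_0^1\mathsf z_B(\theta B)\dd\theta$. This is a continuous functional of the derivatives and so also converges uniformly near $0$.
\end{itemize}

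The inequalities are closed conditions, so they pass to the limit. This yields \eqref{eq:zt-zero-temp-five} on the shrunken interval, and then on all of $I$ by exhaustion.
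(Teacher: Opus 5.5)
Your proposal is correct and follows essentially the same route as the paper. You rescale step-function cascades by $y=\lambda x$ to land in Proposition~\ref{prop:zt-finite-KJ} with parameter $m/\lambda$, use the homogeneity of the five inequalities, treat a zero parameter as a limit, and then pass $n\to\infty$ (Lemma~\ref{lem:zt-parabolic-stability}, with $\gamma_{\lambda,n}=m$ exactly on a compact subinterval) followed by $\lambda\to\infty$ via \eqref{eq:zt-lambda-smooth-convergence}. Your Step~4 likewise mirrors the paper's quantitative inversion $x\mapsto B$: a uniform lower bound on $\mathsf C$ near $x(t,B)$, spatial derivatives up to order five for $\mathsf J_B=-\tfrac12\mathsf c_{BBB}$, and the representation $\int_0^1\mathsf z_B(t,\theta B)\dd\theta$ at $B=0$.
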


\begin{proof}
For $\lambda>0$, recall $h_\lambda$ from \eqref{eq:zt-terminal-regularization}.
Introduce $(\mathcal S_\lambda f)(y):=\lambda f(y/\lambda)$.  A direct change of variables in \eqref{eq:zt-Cole-Hopf-operator} gives
\begin{equation}
 \mathcal S_\lambda\mathcal T_{a,r}f
 =\mathcal T_{a/\lambda,\lambda^2r}\mathcal S_\lambda f
 \quad\text{and}\quad \mathcal S_\lambda h_\lambda=\log\cosh.
 \label{eq:zt-Cole-Hopf-scaling}
\end{equation}
If $\bar f:=\mathcal S_\lambda f$ and $y:=\lambda x$, then
\begin{equation*}
 \mathsf B_{\bar f}(y)=\mathsf B_f(x),\qquad x_{\bar f}(B)=\lambda x_f(B),\qquad
 \mathsf c_f(B)=\lambda\mathsf c_{\bar f}(B)
 \quad\text{and}\quad \mathsf z_f(B)=\lambda\mathsf z_{\bar f}(B).
\end{equation*}
When $a=\lambda\bar a$, one likewise has $\mathsf K_{f,a}=\lambda\mathsf K_{\bar f,\bar a}$ and $\mathsf J_{f,a}=\lambda\mathsf J_{\bar f,\bar a}$. Thus all five inequalities in \eqref{eq:zt-zero-temp-five} are homogeneous under \eqref{eq:zt-Cole-Hopf-scaling}. At the initial pair $(h_\lambda,\lambda)$,
\begin{equation*}
 \mathsf B_{h_\lambda}(x)=\tanh(\lambda x),\qquad \mathsf c_{h_\lambda}(B)=\lambda(1-B^2),\qquad \mathsf z_{h_\lambda}(B)=\lambda B
 \quad\text{and}\quad \mathsf K_{h_\lambda,\lambda}(B)=\mathsf J_{h_\lambda,\lambda}(B)=0.
\end{equation*}
Consequently, for every nondecreasing step function $\gamma$ bounded by $\lambda$,
\begin{equation*}
 \mathsf K,\mathsf K_B,\mathsf J,\mathsf J_B
 \stackref{\geq}{\mathrm{P.\,\ref{prop:zt-finite-KJ}},\,\eqref{eq:zt-Cole-Hopf-scaling}}0
 \quad\text{and}\quad
 \mathsf c\mathsf J_B
 \stackref{\leq}{\mathrm{P.\,\ref{prop:zt-finite-KJ}},\,\eqref{eq:zt-Cole-Hopf-scaling}}
 3\mathsf z\mathsf J.
\end{equation*}
The case $a=0$ follows by taking a decreasing limit of positive values of $a$.

It remains to pass to the terminal condition $|x|$ and to the given $\gamma$. Put $\gamma_\lambda:=\gamma\wedge\lambda$ and approximate it in $L^1(0,1)$ by nondecreasing step functions $\gamma_{\lambda,n}$ taking values in $[0,\lambda]$. When checking a fixed compact subinterval of a gap on which $\gamma=m$, choose the partitions so that $\gamma_{\lambda,n}=m$ on that subinterval for every $n$ once $\lambda>m$. This exact equality is compatible with the $L^1$ approximation by nondecreasing step functions. Define
\begin{equation*}
 u_{\lambda,n}:=u^{h_\lambda,\gamma_{\lambda,n}}
 \quad\text{and}\quad
 u_\lambda:=u^{h_\lambda,\gamma_\lambda},
\end{equation*}
where both functions solve \eqref{eq:zt-general-PDE} with the displayed coefficients and terminal datum $h_\lambda$. At fixed $\lambda$, apply Lemma~\ref{lem:zt-parabolic-stability} after inserting an intermediate time $T'$ with $T<T'<1$. It gives, for every $j\geq0$,
\begin{equation*}
 \lim_{n\to\infty}\partial_x^ju_{\lambda,n}=\partial_x^ju_\lambda
 \quad\text{locally uniformly on }[0,T]\times\R.
\end{equation*}
The limit $\lambda\to\infty$ is \eqref{eq:zt-lambda-smooth-convergence}. We now make the change from $x$ to $B$ quantitative. Fix compact sets $I_t\Subset I$ and $I_B\Subset(-1,1)$. Choose $R<\infty$ such that
\begin{equation*}
 R>1+\max_{(t,B)\in I_t\times I_B}|x(t,B)|,
\end{equation*}
and define
\begin{equation*}
 \delta_R:=\min_{(t,B)\in I_t\times I_B}\min\{B-\mathsf B(t,-R),\mathsf B(t,R)-B\}>0
 \quad\text{and}\quad c_R:=\min_{\substack{t\in I_t\\|x|\leq R}}\mathsf C(t,x)>0.
\end{equation*}
For $j\in\{1,\ldots,5\}$, set
\begin{equation*}
 \varepsilon_{\lambda,n}^{(j)}(R):=\sup_{\substack{t\in I_t\\|x|\leq R}}
 |\partial_x^ju_{\lambda,n}(t,x)-\partial_x^ju(t,x)|.
\end{equation*}
The fixed-$\lambda$ convergence above and \eqref{eq:zt-lambda-smooth-convergence} give
\begin{equation}
 \lim_{\lambda\to\infty}\limsup_{n\to\infty}\varepsilon_{\lambda,n}^{(j)}(R)=0,
 \qquad j\in\{1,\ldots,5\}.
 \label{eq:zt-approx-spatial-derivatives}
\end{equation}
Define $\mathsf B_{\lambda,n}:=\partial_xu_{\lambda,n}$, let $x_{\lambda,n}(t,B)$ be the inverse of $x\mapsto\mathsf B_{\lambda,n}(t,x)$, and set $\mathsf c_{\lambda,n}(t,B):=\partial_{xx}u_{\lambda,n}(t,x_{\lambda,n}(t,B))$. If $\varepsilon_{\lambda,n}^{(1)}(R)<\delta_R/2$, monotonicity gives $x_{\lambda,n}(t,B)\in[-R,R]$ for every $(t,B)\in I_t\times I_B$. For such indices, the mean-value theorem and the definition of $c_R$ give
\begin{equation*}
 c_R|x_{\lambda,n}(t,B)-x(t,B)|
 \leq|\mathsf B(t,x_{\lambda,n}(t,B))-\mathsf B(t,x(t,B))|
 \leq\varepsilon_{\lambda,n}^{(1)}(R).
\end{equation*}
Moreover, if $\varepsilon_{\lambda,n}^{(2)}(R)\leq c_R/2$, then
\begin{equation*}
 \mathsf c_{\lambda,n}(t,B)\geq\mathsf C(t,x_{\lambda,n}(t,B))-\varepsilon_{\lambda,n}^{(2)}(R)\geq\frac{c_R}{2},
 \qquad \forall (t,B)\in I_t\times I_B.
\end{equation*}
For any one of $u_{\lambda,n}$ and $u$, write $v$ for that function and $\mathsf c_v(t,B):=v_{xx}(t,x_v(t,B))$. At fixed $t$,
\begin{equation*}
 \partial_B=\frac1{\mathsf c_v}\partial_x,\qquad (\mathsf c_v)_B=\frac{v_{xxx}}{\mathsf c_v}
 \quad\text{and}\quad
 (\mathsf c_v)_{BB}=\frac{v_{xxxx}}{\mathsf c_v^2}-\frac{v_{xxx}^2}{\mathsf c_v^3},
\end{equation*}
where every spatial derivative on the right is evaluated at $(t,x_v(t,B))$. One further differentiation gives
\begin{equation*}
 (\mathsf c_v)_{BBB}=\frac{v_{xxxxx}}{\mathsf c_v^3}
 -\frac{4v_{xxx}v_{xxxx}}{\mathsf c_v^4}
 +\frac{3v_{xxx}^3}{\mathsf c_v^5}.
\end{equation*}
Consequently, \eqref{eq:zt-approx-spatial-derivatives} and the two preceding lower bounds imply
\begin{equation}
 \lim_{\lambda\to\infty}\limsup_{n\to\infty}
 \|\partial_B^j\mathsf c_{\lambda,n}-\partial_B^j\mathsf c\|_{L^\infty(I_t\times I_B)}=0,
 \qquad j\in\{0,1,2,3\}.
 \label{eq:zt-approx-slope-derivatives}
\end{equation}
The apparent quotient at $B=0$ causes no loss: $\mathsf c(t,\cdot)$ is even, $\mathsf z(t,\cdot)$ is odd, and
\begin{align*}
 \mathsf K(t,B)&=\int_0^1\mathsf z_B(t,\theta B)\dd\theta-m,\\
 \mathsf K_B(t,B)&=\int_0^1\theta\mathsf z_{BB}(t,\theta B)\dd\theta,\qquad
 \mathsf J(t,B)=\mathsf z_B(t,B)-m\quad\text{and}\quad \mathsf J_B(t,B)=\mathsf z_{BB}(t,B).
\end{align*}
These formulas hold at $B=0$ and, together with \eqref{eq:zt-approx-slope-derivatives}, give uniform convergence of $\mathsf z,\mathsf K,\mathsf K_B,\mathsf J$, and $\mathsf J_B$ on $I_t\times I_B$. Since $\gamma_{\lambda,n}=m$ on $I_t$, the inequalities pass first in the limit $n\to\infty$ and then in the limit $\lambda\to\infty$. Finally, $I_t$ and $I_B$ were arbitrary, so the conclusion holds for every $t\in I$ and $0\leq B<1$.
\end{proof}

The following identities will drive the crossing calculation.

\begin{lemma}[Fixed-slope evolution]
\label{lem:zt-fixed-slope}
Let $u:=u^{|\cdot|,\gamma}$ solve \eqref{eq:zt-general-PDE}, and define the functions of $(t,B)$ by \eqref{eq:zt-slope-curvature}--\eqref{eq:zt-slope-variables}. On an interval where $\gamma=m$, derivatives at fixed $B$ satisfy
\begin{equation}
 x_t=-\mathsf K B,\qquad \mathsf c_t=\mathsf c^2\mathsf J
 \quad\text{and}\quad \mathsf z_t=2\mathsf c\mathsf z\mathsf J-\frac12\mathsf c^2\mathsf J_B.
 \label{eq:zt-fixed-B-evolution}
\end{equation}
If
\begin{equation}
 \Phi:=2\mathsf z^2-m\mathsf c\quad\text{and}\quad
 \cG:=2\mathsf c\mathsf z(3\mathsf z\mathsf J-\mathsf c\mathsf J_B),
 \label{eq:zt-Phi-G-def}
\end{equation}
then
\begin{align}
 \Phi_B&=2\mathsf z(2\mathsf J+3m),                                             \label{eq:zt-Phi-B}\\
 \Phi_t&=\mathsf c\mathsf J\Phi+\cG\quad\text{and}\quad \cG\geq0.           \label{eq:zt-Phi-time}
\end{align}
In particular, if $m>0$, then $\Phi$ is strictly increasing on $0<B<1$.
\end{lemma}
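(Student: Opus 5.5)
The plan is to differentiate the defining relations directly. On an open interval $I$ where $\gamma=m$, the Parisi PDE is classical. Lemma~\ref{lem:zt-parabolic-stability} and the smoothness in Lemma~\ref{lem:zt-PDE-facts} make $u$ smooth in $(t,x)$ on $I\times\R$: time derivatives are traded for spatial ones through the equation, and $\mathsf C>0$. The implicit function theorem then makes $x(t,B)$, $\mathsf c$ and $\mathsf z$ smooth in $(t,B)$, so $\partial_t$ and $\partial_B$ commute in these coordinates. Differentiating $u_t=-\tfrac12(u_{xx}+mu_x^2)$ in $x$ gives
\begin{equation*}
 \mathsf B_t=-\tfrac12\mathsf D-m\mathsf B\mathsf C
 \quad\text{and}\quad
 \mathsf C_t=-\tfrac12\bigl(u_{xxxx}+2m\mathsf C^2+2m\mathsf B\mathsf D\bigr).
\end{equation*}

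To pass to slope variables I will use the chain-rule formulas already recorded in the proof of Proposition~\ref{prop:zt-zero-temp-KJ}: $\mathsf c_B=\mathsf D/\mathsf c$ and $\mathsf c_{BB}=u_{xxxx}/\mathsf c^2-\mathsf D^2/\mathsf c^3$. With $\mathsf c_B=-2\mathsf z$ these give
\begin{equation*}
 \mathsf D=-2\mathsf c\mathsf z
 \quad\text{and}\quad
 u_{xxxx}=-2\mathsf c^2\mathsf z_B+4\mathsf c\mathsf z^2 .
\end{equation*}
I will also use the identity $\mathsf J=\mathsf z_B-m$, which follows from $\mathsf z=B(\mathsf K+m)$.

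The evolution equations in \eqref{eq:zt-fixed-B-evolution} then follow in order.
\begin{enumerate}[label=\textup{(\arabic*)},leftmargin=2.2em]
\item Differentiating $\mathsf B(t,x(t,B))=B$ in $t$ gives $x_t=-\mathsf B_t/\mathsf c=-\mathsf z+mB=-\mathsf KB$.
\item Next, $\mathsf c_t=\mathsf C_t+\mathsf D x_t$. Substituting the expressions above, the $\mathsf c\mathsf z^2$ and $mB\mathsf c\mathsf z$ terms cancel, leaving $\mathsf c^2(\mathsf z_B-m)=\mathsf c^2\mathsf J$.
\item Finally, $\mathsf z_t=-\tfrac12\partial_B(\mathsf c^2\mathsf J)=2\mathsf c\mathsf z\mathsf J-\tfrac12\mathsf c^2\mathsf J_B$.
\end{enumerate}

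For $\Phi$, I compute $\Phi_B=4\mathsf z\mathsf z_B+2m\mathsf z=4\mathsf z(\mathsf J+m)+2m\mathsf z$, which is \eqref{eq:zt-Phi-B}. For the time derivative,
\begin{equation*}
 \Phi_t=4\mathsf z\mathsf z_t-m\mathsf c_t
 =8\mathsf c\mathsf z^2\mathsf J-2\mathsf c^2\mathsf z\mathsf J_B-m\mathsf c^2\mathsf J .
\end{equation*}
Splitting off $\mathsf c\mathsf J(2\mathsf z^2-m\mathsf c)=\mathsf c\mathsf J\Phi$ leaves exactly $6\mathsf c\mathsf z^2\mathsf J-2\mathsf c^2\mathsf z\mathsf J_B=\cG$, which proves the identity in \eqref{eq:zt-Phi-time}.

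Signs come from Proposition~\ref{prop:zt-zero-temp-KJ}. For $0\le B<1$ we have $\mathsf c>0$ and $\mathsf z=B(\mathsf K+m)\ge0$, and the inequality $\mathsf c\mathsf J_B\le3\mathsf z\mathsf J$ gives $\cG\ge0$. If $m>0$ and $0<B<1$, then $\mathsf z\ge mB>0$ and $\mathsf J\ge0$, so $\Phi_B\ge6m^2B>0$, which gives strict increase.

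The computations are routine. The step needing care is justifying joint smoothness in $(t,B)$ on the gap, so that $\partial_t$ at fixed $B$ is legitimate and commutes with $\partial_B$. It also requires the $B=0$ extension to be handled through the even/odd integral representations used in Proposition~\ref{prop:zt-zero-temp-KJ}.
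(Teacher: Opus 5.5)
Your proposal is correct and follows the paper's proof essentially step by step: the same derivation of $x_t$ from $\mathsf B(t,x(t,B))=B$, $\mathsf z_t$ from commuting $\partial_t$ with $\partial_B$, the same algebra for $\Phi_B$ and $\Phi_t$, and the same use of Proposition~\ref{prop:zt-zero-temp-KJ} for the signs of $\cG$ and $\Phi_B$. The only cosmetic difference is in $\mathsf c_t$: you eliminate $u_{xxxx}$ through the chain-rule formula for $\mathsf c_{BB}$, whereas the paper writes $\partial_t\mathsf C=\partial_x(\partial_t\mathsf B)=\mathsf c\,\partial_B(\mathsf c\mathsf K B)$ and applies the product rule, which amounts to the same computation.
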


\begin{proof}
Fix $t$ in the interval under consideration, let $B\in(-1,1)$, and write $x:=x(t,B)$, so that $B=\mathsf B(t,x)$ and $\mathsf c(t,B)=\mathsf C(t,x)$ by \eqref{eq:zt-slope-curvature}. First,
\begin{equation}
 \mathsf D(t,x)
 \stackref{=}{\eqref{eq:zt-BCD-shorthand}}
 \partial_x\mathsf C(t,x)
 \stackref{=}{\eqref{eq:zt-BCD-shorthand},\,\eqref{eq:zt-slope-curvature}}
 \mathsf c(t,B)\mathsf c_B(t,B)
 \stackref{=}{\eqref{eq:zt-slope-variables}}
 -2\mathsf c(t,B)\mathsf z(t,B).
 \label{eq:zt-D-fixed-slope}
\end{equation}
Differentiating \eqref{eq:intro-zero-PDE} in $x$ at fixed $x$ now gives
\begin{equation}
 \partial_t\mathsf B(t,x)
 \stackref{=}{\eqref{eq:intro-zero-PDE}}
 -\frac12\bigl(\mathsf D(t,x)+2m\mathsf B(t,x)\mathsf C(t,x)\bigr)
 \stackref{=}{\eqref{eq:zt-D-fixed-slope}}
 \mathsf c(t,B)(\mathsf z(t,B)-mB)
 \stackref{=}{\eqref{eq:zt-slope-variables}}
 \mathsf c(t,B)\mathsf K(t,B)B.
 \label{eq:zt-Bt-fixed-x}
\end{equation}
At fixed $B$, differentiation of the identity $\mathsf B(t,x(t,B))=B$ gives
\begin{align}
 0
 &\stackref{=}{\eqref{eq:zt-slope-curvature}}
 \frac{\mathrm d}{\mathrm d t}\mathsf B(t,x(t,B))
 \stackref{=}{\text{chain rule}}
 \partial_t\mathsf B(t,x)+\mathsf C(t,x)x_t(t,B)\notag\\
 &\stackref{=}{\eqref{eq:zt-Bt-fixed-x},\,\eqref{eq:zt-slope-curvature}}
 \mathsf c(t,B)\bigl(\mathsf K(t,B)B+x_t(t,B)\bigr).
 \label{eq:zt-inverse-time-chain}
\end{align}
Since $\mathsf c(t,B)>0$, equation \eqref{eq:zt-inverse-time-chain} yields
\begin{equation}
 x_t(t,B)=-\mathsf K(t,B)B.
 \label{eq:zt-xt-fixed-B}
\end{equation}
We next compute the derivative of $\mathsf c(t,B)=\mathsf C(t,x(t,B))$ while keeping $B$ fixed. Using $\partial_t\mathsf C=\partial_x(\partial_t\mathsf B)$ at fixed $x$ and $\partial_x=\mathsf c\partial_B$ after the change of variables $x\mapsto B=\mathsf B(t,x)$, we obtain
\begin{align}
 \mathsf c_t(t,B)
 &\stackref{=}{\eqref{eq:zt-slope-curvature}}
 \partial_t\mathsf C(t,x)+\mathsf D(t,x)x_t(t,B)\notag\\
 &\stackref{=}{\partial_t\partial_x=\partial_x\partial_t}
 \partial_x\bigl(\partial_t\mathsf B(t,x)\bigr)+\mathsf D(t,x)x_t(t,B)\notag\\
 &\stackref{=}{\eqref{eq:zt-Bt-fixed-x},\,\eqref{eq:zt-slope-curvature}}
 \mathsf c(t,B)\partial_B\bigl(\mathsf c(t,B)\mathsf K(t,B)B\bigr)+\mathsf D(t,x)x_t(t,B)\notag\\
 &\stackref{=}{\eqref{eq:zt-D-fixed-slope},\,\eqref{eq:zt-xt-fixed-B}}
 \mathsf c(t,B)\partial_B\bigl(\mathsf c(t,B)\mathsf K(t,B)B\bigr)-\mathsf c(t,B)\mathsf c_B(t,B)\mathsf K(t,B)B\notag\\
 &\stackref{=}{\text{product rule}}
 \mathsf c(t,B)\left[\mathsf c_B(t,B)\mathsf K(t,B)B+\mathsf c(t,B)\bigl(\mathsf K(t,B)+B\mathsf K_B(t,B)\bigr)\right]\notag\\
 &\qquad-\mathsf c(t,B)\mathsf c_B(t,B)\mathsf K(t,B)B\notag\\
 &=\mathsf c(t,B)^2\bigl(\mathsf K(t,B)+B\mathsf K_B(t,B)\bigr)
 \stackref{=}{\eqref{eq:zt-slope-variables}}
 \mathsf c(t,B)^2\mathsf J(t,B).
 \label{eq:zt-ct-fixed-B}
\end{align}
From this point to the end of the proof, every function without displayed arguments is evaluated at $(t,B)$. Equations \eqref{eq:zt-slope-variables} and \eqref{eq:zt-ct-fixed-B} give
\begin{align}
 \mathsf z_t
 &\stackref{=}{\eqref{eq:zt-slope-variables}}
 -\frac12\partial_t(\mathsf c_B)
 \stackref{=}{\partial_t\partial_B=\partial_B\partial_t}
 -\frac12\partial_B(\mathsf c_t)\notag\\
 &\stackref{=}{\eqref{eq:zt-ct-fixed-B}}
 -\frac12\partial_B(\mathsf c^2\mathsf J)
 \stackref{=}{\eqref{eq:zt-slope-variables}}
 2\mathsf c\mathsf z\mathsf J-\frac12\mathsf c^2\mathsf J_B.
 \label{eq:zt-z-time-fixed-B}
\end{align}
Equations \eqref{eq:zt-xt-fixed-B}, \eqref{eq:zt-ct-fixed-B}, and \eqref{eq:zt-z-time-fixed-B} prove \eqref{eq:zt-fixed-B-evolution}. The definitions in \eqref{eq:zt-slope-variables} also give
\begin{align}
 \mathsf z
 &\stackref{=}{\eqref{eq:zt-slope-variables}}
 (m+\mathsf K)B,\notag\\
 \mathsf z_B
 &\stackref{=}{\eqref{eq:zt-slope-variables}}
 m+\mathsf K+B\mathsf K_B
 \stackref{=}{\eqref{eq:zt-slope-variables}}
 m+\mathsf J.
 \label{eq:zt-z-KJ-identities}
\end{align}
Therefore
\begin{equation*}
 \Phi_B
 \stackref{=}{\eqref{eq:zt-Phi-G-def}}
 4\mathsf z\mathsf z_B-m\mathsf c_B
 \stackref{=}{\eqref{eq:zt-slope-variables},\,\eqref{eq:zt-z-KJ-identities}}
 2\mathsf z(2\mathsf J+3m),
\end{equation*}
which proves \eqref{eq:zt-Phi-B}. For the time derivative,
\begin{align*}
 \Phi_t
 &\stackref{=}{\eqref{eq:zt-Phi-G-def}}
 4\mathsf z\mathsf z_t-m\mathsf c_t\stackref{=}{\eqref{eq:zt-fixed-B-evolution}}
 4\mathsf z\left(2\mathsf c\mathsf z\mathsf J-\frac12\mathsf c^2\mathsf J_B\right)-m\mathsf c^2\mathsf J\\
 &=\mathsf c\mathsf J(2\mathsf z^2-m\mathsf c)+2\mathsf c\mathsf z(3\mathsf z\mathsf J-\mathsf c\mathsf J_B)
 \stackref{=}{\eqref{eq:zt-Phi-G-def}}
 \mathsf c\mathsf J\Phi+\cG.
\end{align*}
Moreover, for $0\leq B<1$,
\begin{equation*}
 \cG=2\mathsf c\mathsf z(3\mathsf z\mathsf J-\mathsf c\mathsf J_B)
 \stackref{\geq}{\eqref{eq:zt-zero-temp-five},\,\eqref{eq:zt-z-KJ-identities}}0,
\end{equation*}
which proves \eqref{eq:zt-Phi-time}. Finally, if $m>0$ and $0<B<1$, then \eqref{eq:zt-zero-temp-five}, \eqref{eq:zt-z-KJ-identities}, and \eqref{eq:zt-Phi-B} give
\begin{equation*}
 \Phi_B
 \stackref{=}{\eqref{eq:zt-Phi-B}}
 2\mathsf z(2\mathsf J+3m)>0.
\end{equation*}
\end{proof}

\subsection{Other technical lemmas}
\label{sec:zt-tails}

For the next section, we need the following two results.

\begin{lemma}[Uniform tail estimates]
\label{lem:zt-tails}
Let $u:=u^{|\cdot|,\gamma}$ solve \eqref{eq:zt-general-PDE}, let $X$ solve \eqref{eq:zt-optimal-SDE}, and define $\rho_t,Q,\mathfrak r,H$ from $(u,\gamma,X)$ as above. Let $(a,b)\subset[0,1)$ be an interval on which $\gamma(t)=m$, and let $I\Subset(a,b)$. Define
\begin{equation*}
 \widehat{\mathsf z}(t,x):=\mathsf z(t,\mathsf B(t,x)),
 \qquad \forall (t,x)\in I\times\R.
\end{equation*}
Uniformly for $t\in I$ as $x\to+\infty$,
\begin{align}
 \log\mathsf C(t,x)&=-\frac{x^2}{2(1-t)}+O(x+\log x),                  \label{eq:zt-C-Gaussian-tail}\\
 \widehat{\mathsf z}(t,x)&=O(x),\qquad \partial_x\widehat{\mathsf z}(t,x)=O(x^2)\quad\text{and}\quad \partial_{xx}\widehat{\mathsf z}(t,x)=O(x^3),\notag\\
 0\leq\mathfrak r_x&\leq\frac1{t-a},\qquad
 \mathfrak r=O(x)\quad\text{and}\quad H=O(1+x^2).             \label{eq:zt-score-tail}
\end{align}
There are constants $c_I,C_I>0$ such that
\begin{equation}
 \rho_t(x)\leq C_Ie^{-c_Ix^2+C_I|x|},
 \qquad t\in I,\ x\in\R.
 \label{eq:zt-rho-Gaussian-tail}
\end{equation}
\end{lemma}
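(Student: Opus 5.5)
We split the statement into the PDE estimates \eqref{eq:zt-C-Gaussian-tail} and the $\widehat{\mathsf z}$ bounds, and the density/score estimates \eqref{eq:zt-score-tail}--\eqref{eq:zt-rho-Gaussian-tail}.

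\textbf{Curvature tail.} For $t\in I$ we write $u(t,\cdot)$ through the Cole--Hopf representation over $[t,1]$. Since $\gamma\geq m$ on $[t,1)$ and $\gamma\in L^1$, we approximate $\gamma$ on $[t,1)$ by step functions as in Lemma~\ref{lem:zt-parabolic-stability}. Then $u(t,\cdot)$ is a nested composition of operators $\mathcal T_{\gamma_j,r_j}$ applied to $|x|$, with $\sum r_j=1-t$.

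For $|x|$ after a single heat step, an explicit computation gives $u_{xx}=2p_{1-t}(x)$ when $\gamma\equiv0$. For a general step, write $\mathsf C(t,x)=\partial_x\mathsf B$, where $\mathsf B(t,x)$ is a tilted-Gaussian expectation of $\mathrm{sign}$. The key point is that $1-\mathsf B(t,x)$ equals a tilted probability that the underlying path crosses to the negative half-line. The tilt $e^{\gamma(s) u}$ changes the exponent only by $O(x)$, because $u$ is one-Lipschitz and $\int\gamma<\infty$ near the positions visited on the relevant time scale.

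Concretely, I would prove two-sided bounds $e^{-x^2/(2(1-t))-C(x+\log x)}\leq \mathsf C(t,x)\leq e^{-x^2/(2(1-t))+C(x+\log x)}$ by induction over the steps, with constants depending only on $\|\gamma\|_{L^1}$ and $m$. The step behind the lower bound is to exhibit a single Gaussian path segment. To obtain uniformity in the step approximation, I would use the bridge/stochastic-control representation with the optimal drift $\gamma u_x\in[-\gamma,\gamma]$: under the controlled measure the displacement differs from Brownian motion by at most $\int_t^1\gamma$, which is finite.

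\textbf{The $\widehat{\mathsf z}$ bounds.} These follow by the chain rule from $\widehat{\mathsf z}=-\tfrac12\mathsf c_B=-\mathsf D/(2\mathsf C)$ by \eqref{eq:zt-D-fixed-slope}. Hence $\widehat{\mathsf z}=-\tfrac12\partial_x\log\mathsf C$, and its $x$-derivatives are combinations of $\partial_x^k\log\mathsf C$. We therefore need $\partial_x^k\log\mathsf C=O(x^{k})$ for $k=1,2,3$. I would obtain these from the same representation: $\partial_x\log\mathsf C$ is a tilted conditional mean of the Gaussian score, which is $O(x)$, and higher log-derivatives are tilted cumulants, which are polynomially bounded. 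This is the \textbf{main obstacle}: passing from the leading-order bound on $\log\mathsf C$ to control of its derivatives. Plain asymptotics do not differentiate, so the estimates must come from the representation itself.

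\textbf{The score and $H$ estimates.} On $I\Subset(a,b)$, Lemma~\ref{lem:zt-Q-dynamics} gives $Q(t)=P_{t-a'}Q(a')$ for any $a<a'<\inf I$; letting $a'\downarrow a$ gives $Q(t)=P_{t-a}Q(a)$. Brascamp--Lieb for a Gaussian convolution of the log-concave function $Q(a)$ (Proposition~\ref{prop:zt-H-monotonicity}) then yields $0\leq-\partial_{xx}\log Q\leq 1/(t-a)$, which is $0\leq\mathfrak r_x\leq 1/(t-a)$. Since $\mathfrak r$ is odd, $\mathfrak r(0)=0$, and integrating gives $\mathfrak r=O(x)$. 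Substituting into \eqref{eq:zt-r-H-identity} gives $H=O(1+x^2)$.

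\textbf{The density tail.} For \eqref{eq:zt-rho-Gaussian-tail}, log-concavity and evenness give $Q(t,x)\leq Q(t,0)e^{-\kappa x^2}$. Here $\kappa>0$ comes from strict log-concavity at the origin inherited from the heat kernel, namely $-\partial_{xx}\log Q\geq\,$ something positive. If that inherited bound is unavailable, I would instead use the bridge formula \eqref{eq:zt-Stieltjes-bridge}, whose integral term is nonnegative, to get $Q\leq c_0p_t$ directly. Then $\rho_t=Qe^{\gamma u}\leq c_0p_t(x)e^{m(u(t,0)+|x|)}$, since $u$ is one-Lipschitz. Uniformity over $t\in I$ follows from continuity in $t$ of $u(t,0)$ and of $c_0$.
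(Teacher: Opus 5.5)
Your score, $H$, and density-tail arguments are sound and essentially the paper's. The bridge bound $Q\le c_0p_t$, together with $u\ge0$ and $u(t,x)\le u(t,0)+|x|$, is exactly its Step~5. Using Brascamp--Lieb instead of the posterior-variance identity $\mathfrak r_x=1/s-\Var_{\mu_x}(Y)/s^2$ is cosmetic. The gap is in the PDE half, and it already appears in \eqref{eq:zt-C-Gaussian-tail}.

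The control representation with drift $\gamma u_x\in[-\gamma,\gamma]$ does give the bracket $2\overline\phi((x+A)/\sqrt{1-t})\le 1-\mathsf B(t,x)\le 2\overline\phi((x-A)/\sqrt{1-t})$, where $A=\int_t^1\gamma$. But this controls only the integrated quantity $I_t(x):=1-\mathsf B(t,x)=\int_x^\infty\mathsf C(t,y)\dd y$. It says nothing pointwise about the derivative $\mathsf C$; this is the same ``asymptotics do not differentiate'' obstruction you flag later for $\widehat{\mathsf z}$. You also give no mechanism that makes an induction over Cole--Hopf steps uniform in the number of steps, since crude per-step tilt errors of size $\sqrt{r_j}$ accumulate. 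The missing input is that $\mathsf C(t,\cdot)$ is nonincreasing on $(0,\infty)$, which is available from \eqref{eq:zt-D-negative}, equivalently $\widehat{\mathsf z}\ge0$. With it, $h^{-1}(I_t(x)-I_t(x+h))\le\mathsf C(t,x)\le h^{-1}I_t(x-h)$. Choosing $h>2A$, the Mills bounds give $I_t(x+h)\le I_t(x)/2$ for large $x$, and \eqref{eq:zt-C-Gaussian-tail} follows uniformly on $I$.

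For the $\widehat{\mathsf z}$ bounds, your ``tilted cumulant'' description of $\partial_x^k\log\mathsf C$ is asserted, not derived. After one Cole--Hopf step inside the gap, $\mathsf C$ equals a tilted mean of $u_{xx}$ plus $m$ times a tilted variance of $u_x$. That is a nonlinear functional of the tilted law, not a Gaussian average of a fixed positive weight, so its log-derivatives are not cumulants and no polynomial bound follows.

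The idea you are missing is to use the sign inequalities of Proposition~\ref{prop:zt-zero-temp-KJ} as shape information in $x$. Set $\ell:=-\log\mathsf C(t,\cdot)$. Then $\ell'=2\widehat{\mathsf z}$ and $\ell''=2\mathsf C(m+\mathsf J)\ge0$. This convexity, with $\ell'$ odd, is also what gives the monotonicity needed above. Convexity plus the leading asymptotic gives $0\le\ell'(x)\le(\ell(2x)-\ell(x))/x=O(x)$, with no asymptotic differentiated.

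For $y:=\partial_x\widehat{\mathsf z}=\mathsf C(m+\mathsf J)$ one has $y'=-2\widehat{\mathsf z}\,y+\mathsf C^2\mathsf J_B$. The inequalities $\mathsf J_B\ge0$, $\mathsf c\mathsf J_B\le3\mathsf z\mathsf J$ and $\mathsf C\mathsf J\le y$ give $-2\widehat{\mathsf z}\,y\le y'\le\widehat{\mathsf z}\,y$. Since $\widehat{\mathsf z}=O(x)$, $y$ changes by at most a bounded factor on $[x-1/x,x]$. Hence $(c/x)\,y(x)\le\int_{x-1/x}^x y=\widehat{\mathsf z}(x)-\widehat{\mathsf z}(x-1/x)=O(x)$, so $y=O(x^2)$. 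The same bracket on $y'$ then gives $\partial_{xx}\widehat{\mathsf z}=O(x^3)$.
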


Hence, all weighted integrals and integrations by parts in Subsection~\ref{sec:zt-crossing} are consequently absolutely justified.

\begin{proof}
See Appendix~\ref{a.pf.lem:zt-tails}.
\end{proof}

\begin{lemma}[Regularity of polynomial moments of spatial derivatives]
\label{lem:zt-polynomial-moment-regularity}
Let $u:=u^{|\cdot|,\gamma}$ solve \eqref{eq:zt-general-PDE} and let $X$ solve \eqref{eq:zt-optimal-SDE}. For $k\geq1$, put
\begin{equation*}
 Z_k(t):=\partial_x^ku(t,X_t).
\end{equation*}
For every $T<1$, every $N\geq1$, and every polynomial $P$ in $N$ variables, the map
\begin{equation*}
 M_P(t):=\E P(Z_1(t),\ldots,Z_N(t))
\end{equation*}
is continuous on $[0,T]$.  If in addition $\gamma\in C^r([0,T])$ for an integer $r\geq0$, then
\begin{equation*}
 M_P\in C^{r+1}([0,T]),
\end{equation*}
where derivatives at zero and $T$ are understood one-sided.
\end{lemma}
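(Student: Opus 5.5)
The plan is to establish the continuity claim by dominated convergence, and the differentiability claim by induction on $r$ using It\^o's formula.

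\emph{Continuity.} Fix $T<T'<1$. Lemma~\ref{lem:zt-parabolic-stability}, applied with $\alpha_n=\alpha=\gamma$ and $g_n=g=|\cdot|$, shows that each $\partial_x^ku$ is jointly continuous on $[0,T]\times\R$. It also shows that $\partial_x^ku$ is bounded there: for $k\geq2$ this is \eqref{eq:zt-global-parabolic-derivative-bound}, and for $k=1$ it follows from $|u_x|<1$. Hence $P(Z_1(t),\ldots,Z_N(t))$ is uniformly bounded. It is also pathwise continuous in $t$, because $X$ has continuous paths. Dominated convergence therefore gives continuity of $M_P$ on $[0,T]$.

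\emph{Differentiability.} First I would derive the SDE satisfied by each $Z_k$. Differentiate the PDE $k$ times in $x$ and apply It\^o's formula along \eqref{eq:zt-optimal-SDE}, extending \eqref{eq:zt-derivative-SDEs}. The result has the form
\begin{equation*}
 \dd Z_k(t)=\gamma(t)F_k(Z_2,\ldots,Z_{k+1})(t)\dd t+Z_{k+1}(t)\dd W_t,
\end{equation*}
where $F_k$ is a polynomial; it comes from expanding $\partial_x^k(u_x^2)/2$ and cancelling against the drift term $\gamma u_x\partial_x^{k+1}u$. In particular $F_1=0$ and $F_2=-Z_2^2$. This identity should be justified first for step functions $\gamma$, or for the regularized solutions, and then transferred to general $\gamma$ through the approximation in Lemma~\ref{lem:zt-parabolic-stability}; alternatively one can cite \cite[Lemma~3]{ChenHandschyLerman}. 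Every coefficient is bounded on $[0,T]$, so the stochastic integrals are martingales. It\^o's formula for $P(Z_1,\ldots,Z_N)$ then gives
\begin{equation*}
 M_P(t)-M_P(s)=\int_s^t\bigl(\gamma(r)M_{P_1}(r)+M_{P_2}(r)\bigr)\dd r,
\end{equation*}
with explicit polynomials in $N+1$ variables
\begin{equation*}
 P_1:=\sum_k F_k\,\partial_kP
 \quad\text{and}\quad
 P_2:=\frac12\sum_{j,k}Z_{j+1}Z_{k+1}\,\partial_j\partial_kP.
\end{equation*}

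\emph{Induction on $r$.} For $r=0$, $\gamma$ is continuous, and $M_{P_1},M_{P_2}$ are continuous by the first part. The integrand is therefore continuous, so $M_P\in C^1([0,T])$ with derivative $\gamma M_{P_1}+M_{P_2}$. For the inductive step, assume the claim holds for $r-1$ and for all polynomials in any number of variables. If $\gamma\in C^r$, then $M_{P_1},M_{P_2}\in C^r$ by the inductive hypothesis, and $\gamma\in C^r$ as well. Hence $M_P'=\gamma M_{P_1}+M_{P_2}\in C^r$, that is, $M_P\in C^{r+1}$. The number of variables grows at each step, which is why the induction has to be stated over all $N$ simultaneously. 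Derivatives at $0$ and at $T$ are one-sided because the identities hold on closed intervals.

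\emph{Main obstacle.} The main obstacle is justifying the derivative SDE for every $k$ when $\gamma$ is a general element of $\cU$, which may jump. The PDE is then only classical in the integrated (Stieltjes) sense in time. I would first prove the integral identity for $M_P$ with smooth or step approximations $\gamma_n\to\gamma$ in $L^1$, equal to $\gamma$ where relevant. I would then pass to the limit using \eqref{eq:zt-parabolic-Ck-stability}, the uniform bounds \eqref{eq:zt-global-parabolic-derivative-bound}, and weak convergence of $X^n_t$ to $X_t$, which follows from locally uniform convergence of the drifts. The integral identity is insensitive to modifying $\gamma$ on null sets, so it survives the limit.
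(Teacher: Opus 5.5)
Your proposal is correct and follows essentially the paper's argument. Continuity comes from the boundedness and joint continuity of $\partial_x^ku$ on $[0,T]\times\R$ together with dominated convergence. Differentiability comes from the derivative formula $M_P'=\E[(\mathcal L_{\gamma(t)}P)(Z_1,\ldots,Z_{N+1})]$, followed by induction on $r$ over all polynomials simultaneously; your $\gamma M_{P_1}+M_{P_2}$, with $F_k=-\frac12\sum_{j=1}^{k-1}\binom kj Z_{j+1}Z_{k-j+1}$, is exactly the paper's $\mathcal L_{\gamma(t)}P$.

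The only difference is how the derivative formula is justified. The paper identifies $M_P$, via Girsanov, with the weighted Brownian expectation of \cite{AuffingerChenProperties} and cites their Lemma~2. You instead derive it directly by It\^o's formula along $X$ with an approximation step. For the differentiability claim, that identity is only needed when $\gamma$ is continuous on $[0,T]$, which makes the approximation you flag as the main obstacle easier.
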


\begin{proof}
See Appendix~\ref{a.pf.lem:zt-polynomial-moment-regularity}.
\end{proof}

\section{Zero-temperature gap exclusion}
\label{sec:zero-temperature-support}

\subsection{The arbitrary-gap crossing theorem}
\label{sec:zt-crossing}

We now prove the central statement. Throughout this subsection, let $u:=u^{|\cdot|,\gamma}$ solve \eqref{eq:zt-general-PDE}, let $X$ solve \eqref{eq:zt-optimal-SDE}, and let $\rho_t$ and $\Gamma$ be defined by \eqref{eq:zt-rho-density} and \eqref{eq:zt-Gamma-h-G}. Suppose that $\gamma(t)=m>0$ for every $t\in(a,b)$. We use $(\mathsf B,\mathsf C,\mathsf D)$ from \eqref{eq:zt-BCD-shorthand}, $Q$ from \eqref{eq:zt-Q-def}, $(\mathfrak r,H)$ from \eqref{eq:zt-r-H-def}, $(x,\mathsf c)$ from \eqref{eq:zt-slope-curvature}, $(\mathsf z,\mathsf K,\mathsf J)$ from \eqref{eq:zt-slope-variables}, and $(\Phi,\cG)$ from \eqref{eq:zt-Phi-G-def}. All differentiations in $t$ and estimates below are first taken for $t\in I_t$, where $I_t\Subset(a,b)$ is arbitrary. For every $(t,B)\in(a,b)\times[0,1)$, define
\begin{equation}
\begin{aligned}
 \kappa(t,B)&:=\mathsf K(t,B)B=\mathsf z(t,B)-mB,\\
 N(t,B)&:=\mathfrak r(t,x(t,B))+\kappa(t,B),\\
 \text{and}\qquad w(t,B)&:=2\rho_t(x(t,B))\mathsf c(t,B).
\end{aligned}
\label{eq:zt-crossing-kappa-N-w}
\end{equation}
When arguments are omitted below, every function of $(t,B)$ is evaluated at $(t,B)$, while
\begin{equation*}
 \mathfrak r:=\mathfrak r(t,x(t,B)),\qquad
 \mathfrak r_x:=\partial_x\mathfrak r(t,x)\big|_{x=x(t,B)},
 \quad\text{and}\quad
 H:=H(t,x(t,B)).
\end{equation*}

\begin{remark}[Relation with Lopatto's crossing computation]
\label{rem:zt-Lopatto-crossing}
After the change of variables $B=\mathsf B(t,x)$ from \eqref{eq:zt-BCD-shorthand} and \eqref{eq:zt-slope-curvature}, the differentiation of $\int_0^1w(t,B)\Phi(t,B)\dd B$, with $w$ and $\Phi$ defined by \eqref{eq:zt-crossing-kappa-N-w} and \eqref{eq:zt-Phi-G-def}, parallels \cite[Proposition~9.1]{Lopatto2026}. We reproduce every identity used below. The additional zero-temperature inputs are the strict increase of $H$ from Proposition~\ref{prop:zt-H-monotonicity}, with $H$ defined in \eqref{eq:zt-r-H-def}, and the bounds in Lemma~\ref{lem:zt-tails}, which justify the limit $B\uparrow1$ and the integrations by parts.
\end{remark}

\begin{proposition}[Arbitrary-gap crossing]
\label{prop:zt-crossing}
Let $u:=u^{|\cdot|,\gamma}$ solve \eqref{eq:zt-general-PDE}, let $X$ solve \eqref{eq:zt-optimal-SDE}, and let $\rho_t$ and $\Gamma$ be defined by \eqref{eq:zt-rho-density} and \eqref{eq:zt-Gamma-h-G}. Suppose that $\gamma(t)=m>0$ for every $t\in(a,b)$. The auxiliary functions used in the proof are $(\mathsf B,\mathsf C,\mathsf D)$ from \eqref{eq:zt-BCD-shorthand}, $Q$ from \eqref{eq:zt-Q-def}, $(\mathfrak r,H)$ from \eqref{eq:zt-r-H-def}, $(x,\mathsf c)$ from \eqref{eq:zt-slope-curvature}, $(\mathsf z,\mathsf K,\mathsf J)$ from \eqref{eq:zt-slope-variables}, $(\Phi,\cG)$ from \eqref{eq:zt-Phi-G-def}, and $(\kappa,N,w)$ from \eqref{eq:zt-crossing-kappa-N-w}. Then $\Gamma\in C^3(a,b)$ and
\begin{equation}
 \Gamma''(t)=0\quad\Longrightarrow\quad\Gamma'''(t)>0,
 \qquad a<t<b.
 \label{eq:zt-crossing-rule}
\end{equation}
\end{proposition}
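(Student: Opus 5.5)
The plan is to rewrite $\Gamma''$ as a weighted integral of $\Phi$ in the slope variable $B$. Monotonicity of $\Phi$ in $B$ then turns the vanishing of $\Gamma''$ into a single sign change of $\Phi$, and a "one crossing against an increasing weight" argument gives $\Gamma'''>0$.

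\emph{Regularity.} Fix $I_t\Subset(a,b)$. On $I_t$, $\gamma\equiv m$ is $C^\infty$. By \eqref{eq:zt-Gamma-second-x}, $\Gamma''(t)=\E[\mathsf D^2-2m\mathsf C^3](t,X_t)$, which is a polynomial moment. Lemma~\ref{lem:zt-polynomial-moment-regularity}, applied on compact subintervals after restarting the diffusion at a time in $(a,\inf I_t)$, gives $\Gamma''\in C^1$. Hence $\Gamma\in C^3(a,b)$.

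\emph{Change of variables.} By \eqref{eq:zt-D-fixed-slope}, $\mathsf D=-2\mathsf c\mathsf z$ at $x=x(t,B)$, so $\mathsf D^2-2m\mathsf C^3=2\mathsf c^2\Phi$. Substitute $\dd x=\dd B/\mathsf c$ and use evenness of $\rho_t$, $\mathsf c$ and $\Phi$ in $B$. This gives
\begin{equation*}
\Gamma''(t)=\int_{-1}^1 2\rho_t(x(t,B))\,\mathsf c\,\Phi\,\dd B=2\int_0^1 w(t,B)\Phi(t,B)\dd B .
\end{equation*}
Differentiating at fixed $B$ and using \eqref{eq:zt-Phi-time} yields
\begin{equation*}
\tfrac12\Gamma'''=\int_0^1\bigl(w_t+\mathsf c\mathsf J w\bigr)\Phi\,\dd B+\int_0^1 w\,\cG\,\dd B .
\end{equation*}
The last integral is $\geq0$ by Lemma~\ref{lem:zt-fixed-slope}.

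\emph{Computing $w_t$.} Write $\rho=Qe^{mu}$ and use the heat equation \eqref{eq:zt-Q-heat} for $Q$, the Parisi PDE for $u$, and the fixed-slope rules $x_t=-\kappa$, $\mathsf c_t=\mathsf c^2\mathsf J$ from \eqref{eq:zt-fixed-B-evolution}. The goal is to express $w_t+\mathsf c\mathsf Jw$ as
\begin{equation*}
w\,\psi(t,B)+\partial_B\bigl(w\,\mathsf c\,(\text{an expression in }N,\kappa)\bigr),
\end{equation*}
where $\psi$ involves $H$, $N$, $\kappa$, $\mathsf K$ and $\mathsf J$. This is the computation paralleling \cite[Proposition~9.1]{Lopatto2026}. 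I then integrate the divergence term by parts against $\Phi$. The boundary term at $B=0$ vanishes because $\kappa(t,0)=0$ and $\mathfrak r(t,0)=0$ by evenness. The boundary term as $B\uparrow1$ vanishes by the Gaussian tails in Lemma~\ref{lem:zt-tails}: $\rho_t$ decays like $e^{-cx^2}$, while $\mathfrak r$, $\widehat{\mathsf z}$ and $H$ grow only polynomially. The derivative then falls on $\Phi$, producing $\Phi_B=2\mathsf z(2\mathsf J+3m)$. The aim is to collect everything into the form
\begin{equation*}
\tfrac12\Gamma'''=\int_0^1 w\,\Psi\,\Phi\,\dd B+(\text{manifestly nonnegative terms}),
\end{equation*}
with $B\mapsto\Psi(t,B)$ nondecreasing. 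The nonnegative terms should come from $\cG$, from $\mathsf z\geq0$, $\mathsf J\geq0$, $\mathsf K_B\geq0$, from $\mathfrak r_x\geq0$, and from log-concavity of $Q$. The monotonicity of $\Psi$ should come from $H(t,\cdot)$ being strictly increasing on $(0,\infty)$ (Proposition~\ref{prop:zt-H-monotonicity}) combined with $\mathsf K_B,\mathsf J_B\ge0$ from \eqref{eq:zt-zero-temp-five}.

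\emph{Crossing argument.} Since $m>0$, $\Phi$ is strictly increasing on $(0,1)$ by Lemma~\ref{lem:zt-fixed-slope}, and $\Phi(t,0)=-m\mathsf c(t,0)<0$. Suppose $\Gamma''(t)=0$. Then $\int_0^1 w\Phi\,\dd B=0$ with $w>0$, so $\Phi$ has a unique zero $B_0\in(0,1)$. It follows that
\begin{equation*}
\int_0^1 w\Psi\Phi\,\dd B=\int_0^1 w\,\bigl(\Psi-\Psi(B_0)\bigr)\Phi\,\dd B\geq0,
\end{equation*}
since both factors have the same sign on each side of $B_0$. This inequality is strict because $\Psi$ is strictly increasing, which is inherited from the strict increase of $H$. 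Hence $\Gamma'''(t)>0$.

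\emph{Main obstacle.} I expect the hardest step to be the algebra for $w_t$: finding the right grouping so that the non-exact remainder is exactly $w\Psi\Phi$ with $\Psi$ monotone, plus signed terms. If the direct grouping fails, I would first try writing $\Psi$ as a combination of $H$, $N^2$ and $\mathfrak r_x$, using $H=\mathfrak r^2-\mathfrak r_x$ from \eqref{eq:zt-r-H-identity}, and absorb the cross terms via the identity $\mathsf z=(m+\mathsf K)B$.
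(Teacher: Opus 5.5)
Your reduction matches the paper's: $\Gamma''=2\int_0^1 w\Phi\,\dd B$, strict increase and a single zero of $\Phi$, and a covariance argument against the strictly increasing $H$. Carrying out your $w_t$ computation gives exactly $w_t+\mathsf c\mathsf Jw=w\bigl(\tfrac12H+R_1+R_0\bigr)$ with $R_1=\kappa\mathfrak r+\tfrac12\kappa^2$ and $R_0=2\mathsf c\mathsf J-\tfrac m2\mathsf c-\tfrac12\mathsf z^2$. Your $\Psi$ is then $\tfrac12H+R_1$, which is nondecreasing because $(R_1)_B=\mathsf JN+\kappa\mathfrak r_x/\mathsf c\ge0$. (Your regularity step is fine in substance via It\^o's formula on the gap. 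Note, though, that Lemma~\ref{lem:zt-polynomial-moment-regularity} as stated starts from $X_0=0$ and needs $\gamma$ smooth on all of $[0,T]$, so the restart needs a word.)

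\emph{The genuine gap is the $R_0$ term, and it is not just a matter of finding the right grouping.}

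Since $(R_0)_B=2\mathsf c\mathsf J_B-5\mathsf z\mathsf J$ has no sign, $\int_0^1 w\Phi R_0\,\dd B$ can be negative. So you cannot set $\int_0^1 w\cG\,\dd B$ aside as merely nonnegative: it is needed to compensate. Your divergence-form integration by parts does not evidently rescue this either. It produces $-\int w\mathsf cE\,\Phi_B\,\dd B$, which has a sign only if $E\le0$, and you have not found such an $E$ that also leaves the rest monotone.

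The missing idea is a second, nonlocal use of the centering $\int_0^1 w\Phi\,\dd B=0$.

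\begin{itemize}
\item Put $F(B):=\int_B^1 w\Phi\,\dd\theta$. The single crossing of $\Phi$ gives $F\ge0$.
\item One also needs the upper bound $F\le w\mathsf c\mathsf z$. The paper proves it by another crossing argument. The function $\mathcal H_0:=w\mathsf c\mathsf z-F$ vanishes at $B=0$ (by the centering and $\mathsf z(0)=0$) and as $B\uparrow1$.
\item Moreover $(\mathcal H_0)_B=wL$ with $L=\mathsf c\mathsf J-\mathsf z(N+\mathsf z)$, and $L$ is strictly decreasing because $(\mathsf c\mathsf J)_B\le\mathsf z\mathsf J$, i.e.\ $\mathsf c\mathsf J_B\le3\mathsf z\mathsf J$. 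Hence $\mathcal H_0\ge0$.
\item Integrating by parts, $\int w\Phi R_0\,\dd B=\int F(R_0)_B\,\dd B$. Writing $(R_0)_B=\mathsf z\mathsf J-2(3\mathsf z\mathsf J-\mathsf c\mathsf J_B)$ then gives
\[
\int_0^1 w(R_0\Phi+\cG)\,\dd B=2\int_0^1\mathcal H_0\,(3\mathsf z\mathsf J-\mathsf c\mathsf J_B)\,\dd B+\int_0^1 F\,\mathsf z\mathsf J\,\dd B\ge0 .
\]
\end{itemize}

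The bound $F\le w\mathsf c\mathsf z$ is essential in this step. For example, where $\mathsf J_B=0$ the integrand $F(R_0)_B+w\cG$ equals $\mathsf z\mathsf J(6w\mathsf c\mathsf z-5F)$, which is negative wherever $F>\tfrac65 w\mathsf c\mathsf z$. So the residual's sign comes from the constraint $\Gamma''(t)=0$ a second time, not from pointwise inequalities.
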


\begin{proof}
\smallskip
\noindent\emph{Step 1: Conversion of $\Gamma''(t)$ to an integral over $B$.}

Fix $t\in(a,b)$. For $x\in\R$, put $B:=\mathsf B(t,x)$. Equations \eqref{eq:zt-slope-curvature} and \eqref{eq:zt-D-fixed-slope} give
\begin{equation*}
 \mathsf C(t,x)=\mathsf c(t,B),\qquad
 \mathsf D(t,x)=-2\mathsf c(t,B)\mathsf z(t,B)
 \quad\text{and}\quad
 \dd B=\mathsf C(t,x)\dd x=\mathsf c(t,B)\dd x.
\end{equation*}
In particular,
\begin{align}
 \mathsf D(t,x)^2-2m\mathsf C(t,x)^3
 &\stackref{=}{\eqref{eq:zt-D-fixed-slope},\,\eqref{eq:zt-slope-curvature}}
 4\mathsf c(t,B)^2\mathsf z(t,B)^2-2m\mathsf c(t,B)^3 \notag\\
 &\stackref{=}{\eqref{eq:zt-Phi-G-def}}
 2\mathsf C(t,x)^2\Phi(t,B).
 \label{eq:zt-Gamma-integrand-Phi}
\end{align}
By Lemma~\ref{lem:zt-PDE-facts} and \eqref{eq:zt-BCD-shorthand},
\begin{equation}
 \rho_t(-x)=\rho_t(x),\qquad \mathsf B(t,-x)=-\mathsf B(t,x)
 \quad\text{and}\quad \mathsf C(t,-x)=\mathsf C(t,x),
 \qquad \forall x\in\R.
 \label{eq:zt-crossing-x-parity}
\end{equation}
Consequently, \eqref{eq:zt-slope-curvature}, \eqref{eq:zt-slope-variables}, and \eqref{eq:zt-Phi-G-def} give
\begin{equation}
\begin{aligned}
 x(t,-B)&=-x(t,B)\quad\text{and}\quad \mathsf c(t,-B)=\mathsf c(t,B),\\
 \mathsf z(t,-B)&=-\mathsf z(t,B)\quad\text{and}\quad \Phi(t,-B)=\Phi(t,B),
 \qquad \forall B\in(-1,1).
\end{aligned}
\label{eq:zt-crossing-B-parity}
\end{equation}
We may now derive the identity used below without suppressing the change of variables:
\begin{align}
 \cI(t)&:=\int_0^1w(t,B)\Phi(t,B)\dd B \stackref{=}{\eqref{eq:zt-crossing-kappa-N-w},\,\eqref{eq:zt-slope-curvature}}
 2\int_0^\infty\rho_t(x)\mathsf C(t,x)^2\Phi\bigl(t,\mathsf B(t,x)\bigr)\dd x \notag\\
 &\stackref{=}{\eqref{eq:zt-crossing-x-parity},\,\eqref{eq:zt-crossing-B-parity}}
 \int_\R\rho_t(x)\mathsf C(t,x)^2\Phi\bigl(t,\mathsf B(t,x)\bigr)\dd x \stackref{=}{\eqref{eq:zt-rho-density}}
 \E\!\left[\mathsf C(t,X_t)^2\Phi\bigl(t,\mathsf B(t,X_t)\bigr)\right] \notag\\
 &\stackref{=}{\eqref{eq:zt-Gamma-integrand-Phi},\,\eqref{eq:zt-Gamma-second-x}}
 \frac12\Gamma''(t).
 \label{eq:zt-Gamma-second-I}
\end{align}

\smallskip
\noindent\emph{Step 2: Derivation and justification of the formula for $\cI'(t)$.}

We next compute the derivatives of the weight. Equation \eqref{eq:zt-Q-def} and $\gamma(t)=m$ give
\begin{equation*}
 \partial_x\log\rho_t(x)\big|_{x=x(t,B)}
 \stackref{=}{\eqref{eq:zt-Q-def},\,\eqref{eq:zt-r-H-def}}
 mB-\mathfrak r.
\end{equation*}
Equations \eqref{eq:zt-slope-curvature} and \eqref{eq:zt-slope-variables} give $x_B=\mathsf c^{-1}$ and $\mathsf c_B=-2\mathsf z$. Hence
\begin{equation}
 \frac{w_B}{w}
 =\frac{mB-\mathfrak r-2\mathsf z}{\mathsf c}
 \stackref{=}{\eqref{eq:zt-slope-variables}}
 -\frac{N+\mathsf z}{\mathsf c}
 \quad\text{and}\quad
 N_B
 \stackref{=}{\eqref{eq:zt-crossing-kappa-N-w},\,\eqref{eq:zt-slope-variables}}
 \frac{\mathfrak r_x}{\mathsf c}+\mathsf J.
 \label{eq:zt-wB-NB}
\end{equation}
Equations \eqref{eq:zt-Q-heat} and \eqref{eq:zt-r-H-def} give $Q_t/Q=H/2$ on $(a,b)\times\R$. At fixed $B$, \eqref{eq:zt-fixed-B-evolution} and \eqref{eq:zt-crossing-kappa-N-w} give $x_t=-\kappa$, $\mathsf c_t=\mathsf c^2\mathsf J$, and
\begin{equation*}
 u_t+u_x\,x_t=-\frac12(\mathsf c+mB^2)-B\kappa.
\end{equation*}
Therefore
\begin{align}
 (\log w)_t
 &=m(u_t+Bx_t)+\frac{Q_t}{Q}
   +x_t(\log Q)_x+\frac{\mathsf c_t}{\mathsf c}                         \notag\\
 &\stackref{=}{\mathrm{L.\,\ref{lem:zt-fixed-slope}},\,\eqref{eq:zt-Q-heat}}
 -\frac m2(\mathsf c+mB^2)-mB\kappa
   +\frac12H+\kappa\mathfrak r+\mathsf c\mathsf J.                            \label{eq:zt-logw-first}
\end{align}
Define
\begin{equation}
 R_1:=\kappa\mathfrak r+\frac12\kappa^2
 \quad\text{and}\quad
 R_0:=2\mathsf c\mathsf J-\frac m2\mathsf c-\frac12\mathsf z^2.
 \label{eq:zt-R1-R0-def}
\end{equation}
Because
\begin{equation*}
 -\frac{m^2B^2}{2}-mB\kappa=\frac{\kappa^2-\mathsf z^2}{2},
\end{equation*}
we may rewrite \eqref{eq:zt-logw-first} as
\begin{equation}
 (\log w)_t
 \stackref{=}{\eqref{eq:zt-logw-first}}
 \frac12H+R_1+R_0-\mathsf c\mathsf J,
 \label{eq:zt-logw-R}
\end{equation}

Formally combining \eqref{eq:zt-Phi-time} and \eqref{eq:zt-logw-R} gives
\begin{equation}
 \cI'(t)
 \stackref{=}{\eqref{eq:zt-Phi-time},\,\eqref{eq:zt-logw-R}}
 \frac12\int_0^1w\Phi H\dd B
 +\int_0^1wR_1\Phi\dd B
 +\int_0^1w(R_0\Phi+\cG)\dd B.
 \label{eq:zt-I-prime-decomposition}
\end{equation}
We justify this differentiation by truncation.  For $0<B_0<1$, every quantity is smooth on a compact set $I_t\times[0,B_0]$, so the differentiated identity holds with upper limit $B_0$.  The composite estimates in Lemma~\ref{lem:zt-tails}, together with \eqref{eq:zt-logw-R} and \eqref{eq:zt-Phi-time}, give uniformly for $t\in I_t\Subset(a,b)$
\begin{align*}
 (\log w)_t&=O(1+x^2)\quad\text{and}\quad \Phi_t=O(1+x^4),\\
 |w\Phi|+|\partial_t(w\Phi)|
 &+w|\Phi H|+w|R_1\Phi|+w|R_0\Phi+\cG|
 \leq C_{I_t}w(1+x^4).
\end{align*}
For example, $\partial_t(w\Phi)=w\{(\log w)_t\Phi+\Phi_t\}$, so the first line implies its bound in the second.  The uniform tail estimate \eqref{eq:zt-weighted-uniform-tail} makes the contribution of $B\in(B_0,1)$ tend to zero uniformly in $t$, both before and after differentiation.  Letting $B_0\uparrow1$ therefore proves \eqref{eq:zt-I-prime-decomposition} and shows that $\cI\in C^1(a,b)$. In view of \eqref{eq:zt-Gamma-second-I}, this also proves the asserted regularity $\Gamma\in C^3(a,b)$.

\smallskip
\noindent\emph{Step 3: Positivity of the first two terms in \eqref{eq:zt-I-prime-decomposition} when $\Gamma''(t)=0$.}

Assume from now on that $\Gamma''(t)=0$.  By \eqref{eq:zt-Gamma-second-I},
\begin{equation}
 \int_0^1w\Phi\dd B=0.
 \label{eq:zt-Phi-centered}
\end{equation}
Let $W_0:=\int_0^1w\dd B$.  For any $f$ such that $wf,w\Phi f\in L^1(0,1)$, the centered identity gives
\begin{align}
 \int_0^1w\Phi f\dd B
 =\frac1{2W_0}\int_0^1\!\!\int_0^1
 &w(B_1)w(B_2)[\Phi(B_1)-\Phi(B_2)] \notag\\
 &\times[f(B_1)-f(B_2)]\dd B_1\dd B_2.                       \label{eq:zt-covariance-identity}
\end{align}
By \eqref{eq:zt-Phi-B}, $\Phi$ is strictly increasing. By Proposition~\ref{prop:zt-H-monotonicity}, $H$ is strictly increasing on the gap. Since $w>0$,
\begin{equation}
 \int_0^1w\Phi H\dd B
 \stackref{>}{\eqref{eq:zt-covariance-identity},\,\eqref{eq:zt-Phi-B},\,\mathrm{P.\,\ref{prop:zt-H-monotonicity}}}0.
 \label{eq:zt-H-term-positive}
\end{equation}

The second term in \eqref{eq:zt-I-prime-decomposition} is nonnegative. Indeed,
\begin{equation*}
 (R_1)_B=\mathsf J\mathfrak r+\kappa\frac{\mathfrak r_x}{\mathsf c}+\kappa \mathsf J
 =\mathsf J N+\kappa\frac{\mathfrak r_x}{\mathsf c}
 \stackref{\geq}{\eqref{eq:zt-zero-temp-five}}0,
\end{equation*}
because $\mathsf J,\kappa,N,\mathfrak r_x\geq0$.  Applying \eqref{eq:zt-covariance-identity} with $f=R_1$ gives
\begin{equation}
 \int_0^1wR_1\Phi\dd B
 \stackref{\geq}{\eqref{eq:zt-covariance-identity}}0.
 \label{eq:zt-R1-term-nonnegative}
\end{equation}

\smallskip
\noindent\emph{Step 4: Construction of a nonnegative antiderivative of $w\Phi$ and its upper bound.}

It remains to control the residual term. Define
\begin{equation}
 F(B):=\int_B^1w(\theta)\Phi(\theta)\dd\theta\quad\text{and}\quad
 \tau(B):=\frac{F(B)}{w(B)}.
 \label{eq:zt-F-tau-def}
\end{equation}
At $B=0$, evenness gives $\mathsf z(0)=0$, so $\Phi(0)=-m\mathsf c(0)<0$. On the other hand, $\mathsf z\geq mB$ and $\mathsf c\to0$ as $B\uparrow1$, so $\Phi>0$ near one. By strict monotonicity, there is a unique $B_\Phi\in(0,1)$ such that $\Phi(B_\Phi)=0$. If $B\leq B_\Phi$, then \eqref{eq:zt-Phi-centered} gives
\begin{equation*}
 F(B)
 \stackref{=}{\eqref{eq:zt-Phi-centered}}
 -\int_0^Bw\Phi\dd\theta\geq0;
\end{equation*}
if $B\geq B_\Phi$, nonnegativity follows directly from the definition. Therefore
\begin{equation}
 \tau\geq0.
 \label{eq:zt-tau-lower}
\end{equation}

We prove the complementary bound
\begin{equation}
 \tau\leq \mathsf c\mathsf z.
 \label{eq:zt-tau-upper}
\end{equation}
Define
\begin{equation}
 \mathcal H_0(B):=w(B)\mathsf c(B)\mathsf z(B)-F(B),
 \qquad \forall B\in[0,1).
 \label{eq:zt-Hscript-def}
\end{equation}
Equation \eqref{eq:zt-F-tau-def} gives $F'=-w\Phi$. From \eqref{eq:zt-wB-NB} and
\begin{equation*}
 (\mathsf c\mathsf z)_B
 \stackref{=}{\eqref{eq:zt-slope-variables}}
 -2\mathsf z^2+\mathsf c(m+\mathsf J)
 \stackref{=}{\eqref{eq:zt-Phi-G-def}}
 \mathsf c\mathsf J-\Phi,
\end{equation*}
we obtain
\begin{equation}
 (\mathcal H_0)_B
 \stackref{=}{\eqref{eq:zt-wB-NB}}
 wL
 \quad\text{and}\quad L:=\mathsf c\mathsf J-\mathsf z(N+\mathsf z).
 \label{eq:zt-Hscript-L}
\end{equation}
The slope inequalities imply
\begin{equation*}
 (\mathsf c\mathsf J)_B=-2\mathsf z\mathsf J+\mathsf c\mathsf J_B
 \stackref{\leq}{\eqref{eq:zt-zero-temp-five}}
 \mathsf z\mathsf J.
\end{equation*}
On the other hand, \eqref{eq:zt-wB-NB} and \eqref{eq:zt-z-KJ-identities} give
\begin{equation*}
 [\mathsf z(N+\mathsf z)]_B
 =(m+\mathsf J)(N+\mathsf z)
 +\mathsf z\left(\frac{\mathfrak r_x}{\mathsf c}+\mathsf J+m+\mathsf J\right)>\mathsf z\mathsf J.
\end{equation*}
Thus $L_B<0$ on $(0,1)$.

At $B=0$, \eqref{eq:zt-Phi-centered} and $\mathsf z(0)=0$ give $\mathcal H_0(0)=0$. As $B\uparrow1$, \eqref{eq:zt-F-tau-def} and absolute integrability give $F(B)\to0$, while \eqref{eq:zt-Hscript-def} and \eqref{eq:zt-wCz-boundary} give $\mathcal H_0(B)\to0$. Furthermore, Lemma~\ref{lem:zt-tails} gives $wL\in L^1(0,1)$. The local derivative identity \eqref{eq:zt-Hscript-L} therefore extends $\mathcal H_0$ to an absolutely continuous function on $[0,1]$. Consequently
\begin{equation*}
 \int_0^1wL\dd B
 \stackref{=}{\eqref{eq:zt-Hscript-L}}
 \mathcal H_0(1)-\mathcal H_0(0)=0.
\end{equation*}
Since $L$ is strictly decreasing and $w>0$, $L$ crosses zero exactly once.  Hence $\mathcal H_0$ first increases and then decreases, with zero values at both endpoints.  Thus $\mathcal H_0\geq0$, which is exactly \eqref{eq:zt-tau-upper}.

\smallskip
\noindent\emph{Step 5: Nonnegativity of the residual term in $\cI'(t)$.}

We may now integrate by parts. Equation \eqref{eq:zt-F-tau-def} gives $F'=-w\Phi$, so
\begin{equation}
 \int_0^1w\Phi R_0\dd B
 =-\int_0^1F'R_0\dd B
 \stackref{=}{\text{integration by parts}}
 \int_0^1F(R_0)_B\dd B.
 \label{eq:zt-R0-integration-by-parts}
\end{equation}
There is no boundary contribution at zero because $F(0)=0$. At one, \eqref{eq:zt-tau-upper}, \eqref{eq:zt-wCz-boundary}, and $R_0=O(x^2)$ give $F(B)R_0(B)\to0$. Also,
\begin{equation*}
 (R_0)_B
 \stackref{=}{\eqref{eq:zt-R1-R0-def},\,\eqref{eq:zt-slope-variables}}
 2\mathsf c\mathsf J_B-5\mathsf z\mathsf J.
\end{equation*}
The integral in \eqref{eq:zt-R0-integration-by-parts} is absolutely convergent: by \eqref{eq:zt-tau-lower}--\eqref{eq:zt-tau-upper},
\begin{align*}
 \tau|(R_0)_B|
 &\stackref{\leq}{\eqref{eq:zt-tau-lower},\,\eqref{eq:zt-tau-upper}}
 \mathsf c\mathsf z(2\mathsf c\mathsf J_B+5\mathsf z\mathsf J) \\
 &=2\mathsf z(\mathsf c^2\mathsf J_B)+5\mathsf z^2(\mathsf c\mathsf J)=O(x^4),
\end{align*}
and \eqref{eq:zt-weighted-polynomial-integrability} applies.  Therefore
\begin{equation*}
 \int_0^1w\Phi R_0\dd B
 =\int_0^1w\tau(2\mathsf c\mathsf J_B-5\mathsf z\mathsf J)\dd B.
\end{equation*}
Using the definition of $\cG$ in \eqref{eq:zt-Phi-G-def}, we obtain the exact residual identity
\begin{align*}
 \int_0^1w(R_0\Phi+\cG)\dd B
 =\int_0^1w\{&
 2\mathsf c\mathsf J_B(\tau-\mathsf c\mathsf z) \\
 &+\mathsf z\mathsf J(6\mathsf c\mathsf z-5\tau)\}\dd B.
\end{align*}
Now $\tau-\mathsf c\mathsf z\leq0$ and $\mathsf c\mathsf J_B\leq3\mathsf z\mathsf J$.  Multiplication by the nonpositive quantity $2(\tau-\mathsf c\mathsf z)$ reverses the latter inequality, so
\begin{align}
 \int_0^1w(R_0\Phi+\cG)\dd B
 &\stackref{\geq}{\eqref{eq:zt-zero-temp-five},\,\eqref{eq:zt-tau-upper}}
 \int_0^1w\mathsf z\mathsf J\{6(\tau-\mathsf c\mathsf z)+6\mathsf c\mathsf z-5\tau\}\dd B \notag\\
 &=\int_0^1w\mathsf z\mathsf J\tau\dd B\geq0.                                \label{eq:zt-residual-nonnegative}
\end{align}

\smallskip
\noindent\emph{Step 6: Proof of \eqref{eq:zt-crossing-rule}.}

Finally, if $\cI(t)=0$, then
\begin{equation*}
 \cI'(t)
 \stackref{=}{\eqref{eq:zt-I-prime-decomposition}}
 \frac12\int_0^1w\Phi H\dd B
 +\int_0^1wR_1\Phi\dd B
 +\int_0^1w(R_0\Phi+\cG)\dd B
 \stackref{>}{\eqref{eq:zt-H-term-positive},\,\eqref{eq:zt-R1-term-nonnegative},\,\eqref{eq:zt-residual-nonnegative}}0.
\end{equation*}
Since $\Gamma''=2\cI$ by \eqref{eq:zt-Gamma-second-I}, this is precisely \eqref{eq:zt-crossing-rule}.
\end{proof}

\subsection{Exclusion of internal support gaps}
\label{sec:zt-internal-gaps}

\begin{proposition}[No internal support gap]
\label{prop:zt-no-internal-gap}
There do not exist $0\leq a<b<1$ such that
\begin{equation*}
 a,b\in S\quad\text{and}\quad \nu((a,b))=0.
\end{equation*}
\end{proposition}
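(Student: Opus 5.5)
Argue by contradiction: suppose $0\le a<b<1$ with $a,b\in S$ and $\nu((a,b))=0$. Then $\gamma$ is constant on $(a,b)$, say $\gamma=m$ there. First check that $m>0$. Since $b\in S$ and $\nu((a,b))=0$, we have $\gamma(b)\ge\gamma(t)$ for $t<b$, so this alone is not enough. However, if $m=0$ then $\gamma=0$ on $[0,b)$ and hence $\nu([0,b))=0$. This contradicts $0\in S$ (Proposition~\ref{prop:zt-variational-conditions}) when $a=0$, and contradicts $a\in S$ when $a>0$. So $m>0$, and Proposition~\ref{prop:zt-crossing} applies on $(a,b)$.

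Next extract boundary information from Proposition~\ref{prop:zt-variational-conditions}. Since $a,b\in S$, we have $G(a)=G(b)=0$, so $\int_a^b h\,\dd t=0$. Also $\Gamma(a)=a$ and $\Gamma(b)=b$, so $h(a)=h(b)=0$. Since $G\ge0$ on $[0,1)$ and $G(a)=0$, $G$ has a minimum at $a$ (interior, or one-sided if $a=0$), and likewise at $b$. Using $G'=-h$, this gives sign information: $h\ge0$ just to the right of $a$ in an averaged sense, and $h\le0$ just to the left of $b$. Combining this with the stability inequality in \eqref{eq:zt-consistency-stability} and \eqref{eq:zt-gap-endpoint-derivatives} yields
\begin{equation*}
\Gamma'(a+)=\E u_{xx}(a,X_a)^2\le1
\quad\text{and}\quad
\Gamma'(b-)=\E u_{xx}(b,X_b)^2\le1,
\end{equation*}
that is, $h'(a+)\le0$ and $h'(b-)\le0$.

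The main step is a shape argument on $h$ over $(a,b)$, where $h\in C^3(a,b)$ by Proposition~\ref{prop:zt-crossing}. Since $h''=\Gamma''$, the crossing rule \eqref{eq:zt-crossing-rule} says that $\Gamma''$ vanishes only with strictly positive derivative. Hence $\Gamma''$ has at most one zero in $(a,b)$, changing sign from $-$ to $+$. So $h$ is concave, then possibly convex, i.e.\ concave--convex. Now analyse $h$ with $h(a)=h(b)=0$, $\int_a^bh=0$, $h'(a+)\le0$ and $h'(b-)\le0$.

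If $h$ is concave on all of $(a,b)$, then $h\ge0$ between its zeros, so $\int h=0$ forces $h\equiv0$. If $h$ is convex throughout, then $h\le0$, and again $h\equiv0$. In the mixed case, $h$ is concave on $(a,c)$ with $h(a)=0$ and $h'(a+)\le0$, so $h<0$ there unless it is identically zero nearby. It is then convex on $(c,b)$ with $h(b)=0$ and $h'(b-)\le0$, which forces $h\ge0$ near $b$ in a convex way. The sign information from $G$ should then contradict this: $G\ge0$ with $G(a)=0$ requires $\int_a^{a+\epsilon}h\le0$, so $h\le0$ near $a$ is consistent. Since $G(b)=0$ and $G\ge0$ just left of $b$, we need $\int_{b-\epsilon}^b h\ge0$. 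Combined with $h'(b-)\le0$ and $h(b)=0$, this gives $h\ge0$ near $b$. So the mixed case is consistent so far, and the argument must use more precise information.

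I expect this mixed concave--convex case to be the main obstacle. My first attempt will be to use that a convex function on $(c,b)$ with $h(b)=0$ and $h'(b-)\le0$ is $\ge0$ and nonincreasing there. A concave function on $(a,c)$ with $h(a)=0$ and $h'(a+)\le0$ is $\le0$ and nonincreasing there. Therefore $h$ is nonincreasing on all of $(a,b)$, and $h(a)=h(b)=0$ forces $h\equiv0$.

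In every case $h\equiv0$ on $(a,b)$, so $\Gamma''\equiv0$ there. This contradicts \eqref{eq:zt-crossing-rule}, which forbids $\Gamma''$ from vanishing on an interval. Hence no internal gap exists.
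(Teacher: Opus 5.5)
Your argument is correct, but it reaches the contradiction by a different and shorter route than the paper.

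\emph{The paper's route.} It uses stability only at the left endpoint, $\Gamma'(a+)\le1$. It compensates with the orthogonality $\int_a^b(\Gamma(t)-t)\,\dd t=0$ coming from $G(a)=G(b)=0$. From these it derives $\int_a^b(t-a)(b-t)\Gamma''(t)\,\dd t=0$, so $\Gamma''$ must change sign exactly once, at some $c$. It then compares $\int_a^b(b-t)\Gamma''$ with $(c-a)^{-1}\int_a^b(t-a)(b-t)\Gamma''$ to obtain $\Gamma(b)-\Gamma(a)<(b-a)\Gamma'(a+)\le b-a$.

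\emph{Your route.} You also use stability at the right endpoint, $\Gamma'(b-)=\E u_{xx}(b,X_b)^2\le1$. This is legitimately available from \eqref{eq:zt-consistency-stability} and \eqref{eq:zt-gap-endpoint-derivatives}. Equivalently, since $\Gamma\in C^1([0,1))$, one has $G''(b)=1-\Gamma'(b)\ge0$ at the interior minimum $b$ of $G$. With both endpoint bounds, your case analysis collapses to one line. The crossing rule makes $\Gamma'$ strictly decreasing on $(a,c]$ and strictly increasing on $[c,b)$, with one piece possibly empty. Hence $\Gamma'(t)<\max\{\Gamma'(a+),\Gamma'(b-)\}\le1$ on $(a,b)$, so $b-a=\Gamma(b)-\Gamma(a)=\int_a^b\Gamma'<b-a$.

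\emph{Comparison.} In your version the condition $\int_a^b h=0$ is not needed: in your one-signed cases the same derivative bounds give $h\equiv0$ directly. You also need no integrability of $\Gamma''$ up to the gap endpoints, only continuity of $\Gamma'$ on $[a,b]$. The paper's route buys the weaker hypothesis of stability at $a$ alone. Yours buys brevity, and it avoids the weighted double integration.

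\emph{One slip to fix.} In your second paragraph the signs are reversed. $G\ge0$ with $G(a)=G(b)=0$ gives $\int_a^{a+\epsilon}h\le0$ and $\int_{b-\epsilon}^bh\ge0$, as you correctly note later. Moreover, this sign information plays no role in obtaining $h'(a+)\le0$ and $h'(b-)\le0$; those come from the stability inequality alone.
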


\begin{proof}
Recall from \eqref{eq:zt-Gamma-h-G} that
\begin{equation*}
 G(q):=\int_q^1(\Gamma(t)-t)\dd t,
 \qquad \forall q\in[0,1).
\end{equation*}
On $(a,b)$, write $\gamma(t)=m$. This constant $m$ is strictly positive. If $a>0$, then $0\in S$ implies $\nu([0,a])>0$. If $a=0$, the absence of mass in $(0,b)$ and the support condition $0\in S$ imply
\begin{equation*}
 \nu(\{0\})=\nu([0,\varepsilon))>0
\end{equation*}
for every sufficiently small $\varepsilon<b$.  Thus $\gamma(t)=m>0$ throughout the gap, and Proposition~\ref{prop:zt-crossing} applies.

The endpoint consistency and stability conditions in Proposition~\ref{prop:zt-variational-conditions} give
\begin{equation}
 \Gamma(a)=a,\qquad \Gamma(b)=b
 \quad\text{and}\quad \Gamma'(a+)\leq1.
 \label{eq:zt-Gamma-gap-endpoints}
\end{equation}
Moreover, $G(a)=G(b)=0$, and hence
\begin{equation}
 0\stackref{=}{\mathrm{P.\,\ref{prop:zt-variational-conditions}}}
 G(a)-G(b)
 =\int_a^b(\Gamma(t)-t)\dd t.
 \label{eq:zt-Gamma-gap-centered}
\end{equation}

Because $b<1$, Lemma~\ref{lem:zt-parabolic-stability} gives a uniform bound on $u_{xx}$ up to both one-sided gap endpoints. Formula \eqref{eq:zt-Gamma-prime}, weak continuity of the diffusion law, and local continuity of $u_{xx}$ therefore show that $\Gamma'$ has finite one-sided limits and is bounded on $[a,b]$.

By Proposition~\ref{prop:zt-crossing}, every zero of $\Gamma''$ in $(a,b)$ is simple and is crossed from negative to positive. Consequently, $\Gamma''$ has at most one zero: after one upward crossing, a second upward crossing would require an intervening downward zero. Thus $\Gamma''$ has a constant sign on at most two subintervals of $(a,b)$, so $\Gamma'$ is monotone on each of them. The bounded endpoint limits of $\Gamma'$ imply $\Gamma''\in L^1(a,b)$.

We may now integrate twice without an endpoint regularity assumption. Equations \eqref{eq:zt-Gamma-gap-endpoints} and \eqref{eq:zt-Gamma-gap-centered} give
\begin{equation}
 \int_a^b(t-a)(b-t)\Gamma''(t)\dd t
 \stackref{=}{\text{two integrations}}
 (b-a)(\Gamma(a)+\Gamma(b))-2\int_a^b\Gamma(t)\dd t
 \stackref{=}{\eqref{eq:zt-Gamma-gap-endpoints},\,\eqref{eq:zt-Gamma-gap-centered}}0.
 \label{eq:zt-Gamma-weighted-second}
\end{equation}
Because $(t-a)(b-t)>0$ for every $t\in(a,b)$, equation \eqref{eq:zt-Gamma-weighted-second} implies that either $\Gamma''$ changes sign or $\Gamma''\equiv0$ on $(a,b)$. The latter alternative contradicts Proposition~\ref{prop:zt-crossing}, since $\Gamma''(t)=0$ would imply $\Gamma'''(t)>0$ for every $t\in(a,b)$. Hence there is a unique $c\in(a,b)$ such that
\begin{equation*}
 \Gamma''(t)<0\quad\forall t\in(a,c)
 \quad\text{and}\quad
 \Gamma''(t)>0\quad\forall t\in(c,b).
\end{equation*}

A second integration gives
\begin{align*}
 \Gamma(b)-\Gamma(a)-(b-a)\Gamma'(a+)
 &=\int_a^b(b-t)\Gamma''(t)\dd t =\int_a^b\frac{(t-a)(b-t)\Gamma''(t)}{t-a}\dd t.
\end{align*}
For $a<t<c$, the numerator is negative and $(t-a)^{-1}>(c-a)^{-1}$; for $c<t<b$, the numerator is positive and $(t-a)^{-1}<(c-a)^{-1}$. Both inequalities are strict on sets of positive measure. Therefore
\begin{equation}
 \Gamma(b)-\Gamma(a)-(b-a)\Gamma'(a+)
 <\frac1{c-a}\int_a^b(t-a)(b-t)\Gamma''(t)\dd t
 \stackref{=}{\eqref{eq:zt-Gamma-weighted-second}}0.
 \label{eq:zt-Gamma-strict-moment}
\end{equation}
Combining \eqref{eq:zt-Gamma-gap-endpoints} and \eqref{eq:zt-Gamma-strict-moment} yields
\begin{equation*}
 \Gamma(b)-\Gamma(a)
 \stackref{<}{\eqref{eq:zt-Gamma-strict-moment}}
 (b-a)\Gamma'(a+)
 \stackref{\leq}{\eqref{eq:zt-Gamma-gap-endpoints}}b-a,
\end{equation*}
contradicting $\Gamma(b)-\Gamma(a)=b-a$ in \eqref{eq:zt-Gamma-gap-endpoints}.
\end{proof}

\begin{remark}[A gap issuing from zero]
\label{rem:zt-first-gap}
The preceding argument also covers $a=0$. Evenness gives $\Gamma(0)=u_x(0,0)^2=0$, Proposition~\ref{prop:zt-variational-conditions} gives $G(0)=0$ and $\Gamma'(0+)=u_{xx}(0,0)^2\leq1$, and the atom at zero makes the constant gap value $m$ positive. These are precisely the conditions used in \eqref{eq:zt-Gamma-gap-endpoints}. Hence the proof excludes a component $(0,b)$ as well as every component $(a,b)$ with $a>0$.
\end{remark}

\subsection{Exclusion of a terminal gap}
\label{sec:zt-terminal-gap}

\begin{proposition}[No gap ending at one]
\label{prop:zt-no-terminal-gap}
There is no $a<1$ such that $(a,1)\cap S=\varnothing$ and $a\in S$.
\end{proposition}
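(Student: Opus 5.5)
Suppose, for contradiction, that $a\in S$ and $(a,1)\cap S=\varnothing$, so $\gamma(t)=m$ on $(a,1)$. The plan is to reuse the crossing rule of Proposition~\ref{prop:zt-crossing} on $(a,1)$ together with boundary information at both ends.

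First we check that $m>0$. The argument is the same as in Proposition~\ref{prop:zt-no-internal-gap}: $0\in S$ gives $\nu([0,a])>0$, or an atom at zero when $a=0$. Hence Proposition~\ref{prop:zt-crossing} applies on every $(a,b)$ with $b<1$, and therefore on $(a,1)$. Consequently $\Gamma''$ has at most one zero on $(a,1)$, crossed upward, and $\Gamma'$ is piecewise monotone there.

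At the left end, Proposition~\ref{prop:zt-variational-conditions} gives $\Gamma(a)=a$ and $\Gamma'(a+)\le1$. At the right end, $G(a)=0$ gives $\int_a^1(\Gamma(t)-t)\dd t=0$. The only information at $t=1$ is the limiting behaviour of $\Gamma$. Since $\gamma$ is constant on $(a,1)$ and $u(1,\cdot)=|\cdot|$, Cole--Hopf gives the explicit formula
\begin{equation*}
 u(t,x)=\frac1m\log\E\exp\{m|x+\sqrt{1-t}Z\|\},
\end{equation*}
and from it $\Gamma(t)\to1$ as $t\uparrow1$. In fact, as $t\uparrow1$, $\mathsf C(t,\cdot)$ concentrates like a kernel of width $\sqrt{1-t}$ near the origin, while $X_t$ keeps a bounded density. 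This yields $1-\Gamma(t)=\E[1-u_x^2]\asymp\sqrt{1-t}$ and $\Gamma'(t)=\E\,\mathsf C^2\asymp(1-t)^{-1/2}\to\infty$. So $\Gamma(1-)=1$, and $\Gamma'$ is unbounded near one.

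Now set $h=\Gamma-t$. Then $h(a)=0$, $h(1-)=0$, $h'(a+)\le0$, $\int_a^1h=0$, and $h''=\Gamma''$ has at most one sign change, from negative to positive. Since $h'\to+\infty$ at one, $h''>0$ near one. There are two cases.
\begin{itemize}
\item If $\Gamma''>0$ on all of $(a,1)$, then $h$ is strictly convex with vanishing endpoint values, so $h<0$ inside. This contradicts $\int_a^1 h=0$.
\item Otherwise $h$ is concave on $(a,c)$ and convex on $(c,1)$. Concavity with $h(a)=0$ and $h'(a+)\le0$ gives $h<0$ on $(a,c]$. On $(c,1)$ the function $h$ is convex with $h(c)<0$ and $h(1-)=0$, so it lies below its chord and is negative there as well. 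Again $h<0$ everywhere, contradicting $\int_a^1h=0$.
\end{itemize}
The second case can also be organized as the weighted-integration argument of Proposition~\ref{prop:zt-no-internal-gap} with $b=1$, but the sign argument is simpler and avoids integrability of $\Gamma''$ near one.

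The main obstacle is the boundary behaviour at $t=1$. Proposition~\ref{prop:zt-crossing} is only stated for $t<1$, and Lemma~\ref{lem:zt-parabolic-stability} gives bounds only on $[0,T]$ with $T<1$. We must prove $\Gamma(t)\to1$ rigorously, which uses only $\mathsf B\to\operatorname{sign}$ and dominated convergence since $X_1$ has no atom at zero. We also need the sign information near one; only $\Gamma''>0$ on some interval $(1-\epsilon,1)$ is really required, and it follows from $\Gamma'\to\infty$ together with piecewise monotonicity. To prove $\Gamma'\to\infty$ I would use the explicit formula for $u$ on $(a,1)$ and the fact that $\rho_t$ is bounded below near zero uniformly as $t\uparrow1$, obtained from the heat evolution of $Q$ in Lemma~\ref{lem:zt-Q-dynamics}. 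Even without that rate, $\Gamma(1-)=1$ plus the at-most-one-crossing structure suffices once we show $\Gamma''$ is not negative on a whole final interval. That would force $\Gamma'$ decreasing and bounded near one, making $h$ concave near one with $h(1-)=0$, and combined with the left-end concavity this again makes $h<0$ throughout.
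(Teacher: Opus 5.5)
Your proof is correct, but it takes a different route from the paper's.

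\textbf{The paper's route.} The paper never uses the crossing rule, $G(a)=0$, or the endpoint data at $a$; its argument is purely local at $t=1$. It combines the explicit Cole--Hopf formula on the terminal interval, the scaling $x=\sqrt{1-t}\,y$, the limit $\rho_{1-\varepsilon}(\sqrt\varepsilon\,y)\to\rho_1(0)>0$ (from the heat evolution of $Q$), and a Gaussian dominator. This yields $1-\Gamma(t)\sim\frac{4}{\sqrt\pi}\rho_1(0)\sqrt{1-t}$. Hence $h(t)=(1-t)-(1-\Gamma(t))<0$ near one, so $G(q)<0$ for $q$ close to one, contradicting $G\geq0$.

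\textbf{What this means for your proposal.} The asymptotic $1-\Gamma\asymp\sqrt{1-t}$ you sketch is exactly the paper's key step. Once it is proved, the proof is finished, and your convexity case analysis becomes redundant. The paper's route also gives more: $\gamma$ cannot be constant on any terminal interval. It needs neither $a\in S$ nor $m>0$, and it covers $m=0$. The price is the boundary-layer computation.

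\textbf{What your route buys.} You recycle the internal-gap machinery: Proposition~\ref{prop:zt-crossing} on $(a,1)$, which is legitimate because its proof is local in $t$ on compact subintervals. You add $\Gamma(a)=a$, $\Gamma'(a+)\leq1$, $G(a)=0$, and replace the weighted integration by a sign analysis. The advantage is that this needs essentially no information at $t=1$, so you can drop more than you realize:
\begin{itemize}
\item The all-concave case is already excluded by the left-end data alone. Strict concavity with $h(a)=0$ and $h'(a+)\leq0$ gives $h<0$ on $(a,1)$, so $\Gamma'\to\infty$ is never needed.
\item In the convex cases, the chord argument only needs $\limsup_{t\uparrow1}h(t)\leq0$. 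This is immediate from $\Gamma\leq1$, so you do not need to compute $\Gamma(1-)$ either.
\end{itemize}
The cost of your route is reliance on the full crossing theorem on the terminal gap and on the hypothesis $a\in S$.
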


\begin{proof}
Suppose otherwise.  Then
\begin{equation}
 \gamma(t)=m<\infty,\qquad a<t<1.
 \label{eq:zt-terminal-constant}
\end{equation}
\smallskip
\noindent\emph{Step 1: Proof of \eqref{eq:zt-B-boundary-limit}.}

Write $\varepsilon:=1-t$. For $m>0$, the Cole--Hopf formula on the interval $(a,1)$ where $\gamma(t)=m$ is
\begin{equation}
 e^{mu(1-\varepsilon,x)}
 \stackref{=}{\eqref{eq:zt-general-PDE},\,\eqref{eq:zt-terminal-constant}}
 P_\varepsilon(e^{m|\cdot|})(x)
 =\E e^{m|x+\sqrt\varepsilon Z|},
 \qquad Z\sim N(0,1).
 \label{eq:zt-terminal-Cole-Hopf}
\end{equation}
When $m=0$, the corresponding formula is $u(1-\varepsilon,x)=P_\varepsilon|\cdot|(x)$.

Set
\begin{equation*}
 \mathsf B_\varepsilon(y):=u_x(1-\varepsilon,\sqrt\varepsilon\,y).
\end{equation*}
Differentiating \eqref{eq:zt-terminal-Cole-Hopf}, with the unweighted interpretation when $m=0$, gives
\begin{equation}
 \mathsf B_\varepsilon(y)=
 \frac{\E\!\left[
 \operatorname{sign}(y+Z)e^{m\sqrt\varepsilon|y+Z|}
 \right]}
 {\E e^{m\sqrt\varepsilon|y+Z|}}.
 \label{eq:zt-B-epsilon-ratio}
\end{equation}
For every fixed $y$,
\begin{equation}
 \lim_{\varepsilon\downarrow0}\mathsf B_\varepsilon(y)
 =\E\operatorname{sign}(y+Z)
 =2\phi(y)-1.
 \label{eq:zt-B-boundary-limit}
\end{equation}
\smallskip
\noindent\emph{Step 2: Proof of \eqref{eq:zt-rho-boundary-limit} and \eqref{eq:zt-rho-terminal-Gaussian-bound}.}

For every $(t,x)\in(a,1)\times\R$, equations \eqref{eq:zt-Q-def}, \eqref{eq:zt-terminal-constant}, and \eqref{eq:zt-Q-heat} give
\begin{equation*}
 Q(t,x)\stackref{=}{\eqref{eq:zt-Q-def},\,\eqref{eq:zt-terminal-constant}}\rho_t(x)e^{-mu(t,x)}
 \quad\text{and}\quad
 \partial_tQ(t,x)\stackref{=}{\eqref{eq:zt-Q-heat}}\frac12\partial_{xx}Q(t,x).
\end{equation*}
Fix $t_0\in(a,1)$. Then
\begin{equation*}
 Q(t,\cdot)\stackref{=}{\eqref{eq:zt-Q-heat}}P_{t-t_0}Q(t_0,\cdot),
 \qquad \forall t\in(t_0,1),
\end{equation*}
and hence
\begin{equation}
 Q(1,x):=P_{1-t_0}Q(t_0,\cdot)(x),
 \qquad \forall x\in\R,
 \quad\text{and}\quad
 \lim_{t\uparrow1}\sup_{|x|\leq R}|Q(t,x)-Q(1,x)|=0,
 \qquad \forall R>0.
 \label{eq:zt-Q-terminal-limit}
\end{equation}
The function $Q(1,\cdot)$ is smooth and strictly positive. Therefore
\begin{equation}
 \rho_1(x):=e^{m|x|}Q(1,x)
 \quad\text{is continuous with}\quad \rho_1(0)>0.
 \label{eq:zt-rho-one}
\end{equation}
For every fixed $y$, \eqref{eq:zt-terminal-Cole-Hopf}, its $m=0$ counterpart, and \eqref{eq:zt-Q-terminal-limit} give
\begin{equation*}
 \lim_{\varepsilon\downarrow0}u(1-\varepsilon,\sqrt\varepsilon\,y)=0
 \quad\text{and}\quad
 \lim_{\varepsilon\downarrow0}Q(1-\varepsilon,\sqrt\varepsilon\,y)=Q(1,0).
\end{equation*}
Consequently,
\begin{equation}
 \lim_{\varepsilon\downarrow0}\rho_{1-\varepsilon}(\sqrt\varepsilon\,y)=\rho_1(0).
 \label{eq:zt-rho-boundary-limit}
\end{equation}
The stochastic-control representation gives $u(s,z)\geq0$ for every $(s,z)\in[0,1]\times\R$, so \eqref{eq:zt-Stieltjes-bridge} gives $Q(t,x)\leq c_0p_t(x)$. For every $t_1\in(a,1)$, the continuity of $t\mapsto u(t,0)$ on $[t_1,1]$, the identity $\rho_t=Q(t,\cdot)e^{mu(t,\cdot)}$, and the bound $u(t,x)\leq u(t,0)+|x|$ therefore give, for every $(t,x)\in[t_1,1)\times\R$,
\begin{align}
 \rho_t(x)
 &\stackref{=}{\eqref{eq:zt-Q-def},\,\eqref{eq:zt-terminal-constant}}
 e^{mu(t,x)}Q(t,x)\notag\\
 &\stackref{\leq}{\eqref{eq:zt-Stieltjes-bridge}}
 \frac{c_0}{\sqrt{2\pi t}}
 \exp\left\{-\frac{x^2}{2t}+m u(t,0)+m|x|\right\}\notag\\
 &\leq
 \frac{c_0}{\sqrt{2\pi t_1}}
 \exp\left\{m\sup_{t_1\leq r\leq1}u(r,0)+m^2\right\}e^{-x^2/4}
 =:C_{t_1}e^{-x^2/4}.
 \label{eq:zt-rho-terminal-Gaussian-bound}
\end{align}
Here the second inequality uses $t\leq1$ and $m|x|\leq x^2/4+m^2$.

\smallskip
\noindent\emph{Step 3: Proof of \eqref{eq:zt-terminal-square-root}.}

For $y\geq0$ and $0<\varepsilon<1-a$, define the probability of $y+Z<0$ under the exponential tilt in \eqref{eq:zt-B-epsilon-ratio} by
\begin{align}
 p_\varepsilon(y)
 &:=\frac{\E\!\left[\1_{\{y+Z<0\}}e^{m\sqrt\varepsilon|y+Z|}\right]}
 {\E e^{m\sqrt\varepsilon|y+Z|}}\notag\\
 &\stackref{=}{\text{Gaussian shift}}
 \frac{e^{-m\sqrt\varepsilon y}\overline\phi(y-m\sqrt\varepsilon)}
 {e^{-m\sqrt\varepsilon y}\overline\phi(y-m\sqrt\varepsilon)
  +e^{m\sqrt\varepsilon y}\phi(y+m\sqrt\varepsilon)}.
 \label{eq:zt-p-epsilon}
\end{align}
Equations \eqref{eq:zt-B-epsilon-ratio} and \eqref{eq:zt-p-epsilon} yield
\begin{equation}
 \mathsf B_\varepsilon(y)=1-2p_\varepsilon(y)
 \quad\text{and}\quad
 0\leq p_\varepsilon(y)
 \leq\frac{e^{-m\sqrt\varepsilon y}\overline\phi(y-m\sqrt\varepsilon)}
 {e^{m\sqrt\varepsilon y}\phi(y+m\sqrt\varepsilon)}
 \stackref{\leq}{\phi(y+m\sqrt\varepsilon)\geq\phi(0)=1/2}
 2\overline\phi(y-m\sqrt\varepsilon).
 \label{eq:zt-p-epsilon-bound}
\end{equation}
Since $u(1-\varepsilon,\cdot)$ is even, $\mathsf B_\varepsilon(-y)=-\mathsf B_\varepsilon(y)$. For every sufficiently small $\varepsilon>0$, one has $m\sqrt\varepsilon\leq1$, and hence \eqref{eq:zt-p-epsilon-bound} gives
\begin{equation}
 0\leq1-\mathsf B_\varepsilon(y)^2
 =4p_\varepsilon(|y|)(1-p_\varepsilon(|y|))
 \leq8\overline\phi(|y|-1),
 \qquad \forall y\in\R,
 \label{eq:zt-boundary-dominator}
\end{equation}
Moreover,
\begin{equation*}
 \int_\R\overline\phi(|y|-1)\dd y<\infty.
\end{equation*}

Changing variables $x=\sqrt\varepsilon\,y$ gives
\begin{align*}
 1-\Gamma(1-\varepsilon)
 &=\int_\R\rho_{1-\varepsilon}(x)
   [1-u_x(1-\varepsilon,x)^2]\dd x =\sqrt\varepsilon\int_\R
 \rho_{1-\varepsilon}(\sqrt\varepsilon\,y)
 [1-\mathsf B_\varepsilon(y)^2]\dd y.
\end{align*}
For positive functions $F$ and $H$, we use the notation
\begin{equation*}
 F(\varepsilon)\sim H(\varepsilon)\text{ as }\varepsilon\downarrow0
 \quad\Longleftrightarrow\quad
 \lim_{\varepsilon\downarrow0}\frac{F(\varepsilon)}{H(\varepsilon)}=1.
\end{equation*}
Equations \eqref{eq:zt-B-boundary-limit}, \eqref{eq:zt-rho-boundary-limit}, \eqref{eq:zt-rho-terminal-Gaussian-bound}, and \eqref{eq:zt-boundary-dominator}, together with dominated convergence, imply
\begin{equation*}
 1-\Gamma(1-\varepsilon)
 \stackref{\sim}{\eqref{eq:zt-B-boundary-limit},\,\eqref{eq:zt-rho-boundary-limit},\,\eqref{eq:zt-rho-terminal-Gaussian-bound},\,\eqref{eq:zt-boundary-dominator}}
 \rho_1(0)\sqrt\varepsilon
 \int_\R[1-(2\phi(y)-1)^2]\dd y.
\end{equation*}
To evaluate the integral, let $Z_1,Z_2\sim N(0,1)$ be independent. Fubini's theorem gives
\begin{align*}
 \int_\R\phi(y)\overline\phi(y)\dd y
 &=\int_\R\PP(Z_1\leq y<Z_2)\dd y =\E(Z_2-Z_1)_+=\frac1{\sqrt\pi}.
\end{align*}
Since $1-(2\phi-1)^2=4\phi\overline\phi$, we conclude that
\begin{equation}
 1-\Gamma(t)\sim\frac4{\sqrt\pi}\rho_1(0)\sqrt{1-t},
 \qquad t\uparrow1.
 \label{eq:zt-terminal-square-root}
\end{equation}

\smallskip
\noindent\emph{Step 4: Contradiction with \eqref{eq:zt-G-nonnegative}.}

Set $c:=4\rho_1(0)/\sqrt\pi>0$.  With $\varepsilon:=1-t$, \eqref{eq:zt-terminal-square-root} gives
\begin{equation*}
 \Gamma(t)-t=\varepsilon-c\sqrt\varepsilon
 +o(\sqrt\varepsilon)<0
\end{equation*}
for all $t$ sufficiently close to one.  Therefore, for $q<1$ sufficiently close to one,
\begin{equation*}
 0
 \stackref{\leq}{\eqref{eq:zt-G-nonnegative}}
 G(q)=\int_q^1(\Gamma(t)-t)\dd t<0,
\end{equation*}
which is impossible.
\end{proof}

\begin{proposition}[Smoothness from saturated consistency]
\label{prop:zt-smoothness}
Let $u:=u^{|\cdot|,\gamma}$ solve \eqref{eq:zt-general-PDE}, and let $X$ solve \eqref{eq:zt-optimal-SDE}. Recall from \eqref{eq:zt-minimizer-notation} that $S:=\supp_{[0,1)}\mathrm d\gamma$. If $S=[0,1)$, then
\begin{equation}
 \gamma\in C^\infty([0,1)),
 \qquad
 \gamma(0)=0
 \quad\text{and}\quad
 \mathrm d\gamma(t)=\gamma'(t)\dd t.
 \label{eq:zt-smoothness-conclusion}
\end{equation}
\end{proposition}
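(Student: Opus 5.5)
If $S=[0,1)$, then Proposition~\ref{prop:zt-variational-conditions} gives $\Gamma(t)=t$ for every $t\in[0,1)$. The plan is to differentiate this identity repeatedly, using the It\^o identities, to obtain an explicit formula for $\gamma$ and then bootstrap its regularity. By Lemma~\ref{lem:zt-Ito-identities-general}, $\Gamma'(t)=\E\,\mathsf C(t,X_t)^2$ holds for every $t\in[0,1)$, so
\begin{equation*}
 \E\,\mathsf C(t,X_t)^2=1,\qquad 0\leq t<1.
\end{equation*}
This function is therefore constant. The integral identity \eqref{eq:zt-Gamma-prime-integral} then shows that its a.e.\ derivative vanishes:
\begin{equation*}
 \E\,\mathsf D(t,X_t)^2=2\gamma(t)\,\E\,\mathsf C(t,X_t)^3
\end{equation*}
for almost every $t<1$. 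Since $\mathsf C>0$, the denominator $\E\,\mathsf C(t,X_t)^3$ is strictly positive. We then define
\begin{equation*}
 \widetilde\gamma(t):=\frac{\E\,\mathsf D(t,X_t)^2}{2\,\E\,\mathsf C(t,X_t)^3},
\end{equation*}
and $\gamma=\widetilde\gamma$ almost everywhere.

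By Lemma~\ref{lem:zt-polynomial-moment-regularity}, with $P$ ranging over the polynomials $Z_3^2$ and $Z_2^3$, both moments are continuous on every $[0,T]$, $T<1$. Hence $\widetilde\gamma$ is continuous. Since $\gamma$ is right-continuous and nondecreasing, it agrees almost everywhere with a continuous function, so it has no jumps on $(0,1)$ and equals $\widetilde\gamma$ everywhere on $(0,1)$. At $t=0$, right-continuity gives $\gamma(0)=\lim_{t\downarrow0}\widetilde\gamma(t)=\widetilde\gamma(0)$.

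To get $\gamma(0)=0$, we evaluate at $t=0$, where $X_0=0$. Evenness of $u(0,\cdot)$ makes $\mathsf D(0,\cdot)=u_{xxx}(0,\cdot)$ odd, so $\mathsf D(0,0)=0$ and $\widetilde\gamma(0)=0$. This argument is the reason for using the continuity of the moments up to the one-sided endpoint $t=0$.

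The regularity follows by bootstrapping. We have $\gamma\in C^0([0,T])$. If $\gamma\in C^r([0,T])$, then Lemma~\ref{lem:zt-polynomial-moment-regularity} gives that both moments lie in $C^{r+1}([0,T])$. The denominator is positive, so $\widetilde\gamma=\gamma\in C^{r+1}([0,T])$. Induction yields $\gamma\in C^\infty([0,T])$ for every $T<1$, hence $\gamma\in C^\infty([0,1))$. Finally, $\gamma$ is $C^1$ with $\gamma(0)=0$ and the convention $\gamma(0-)=0$, so $\mathrm d\gamma$ has no atom at zero. It equals $\gamma'(t)\dd t$ by the fundamental theorem of calculus applied to $\mathrm d\gamma([0,t])=\gamma(t)=\int_0^t\gamma'$.

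The main obstacle I expect is the almost-everywhere step. The identity \eqref{eq:zt-Gamma-prime-integral} only yields a vanishing integrand for a.e.\ $t$, and we must make sure that continuity of the moment ratio, rather than of $\gamma$, is what removes possible jumps. This needs Lemma~\ref{lem:zt-polynomial-moment-regularity} in its $r=0$ form with no regularity assumed on $\gamma$, which is exactly how that lemma is stated. A further check is that $\E\,\mathsf C^3$ is strictly positive and uniformly so on $[0,T]$. This follows from the continuity of the moment together with $\mathsf C>0$ and the fact that $X_t$ has a positive density for $t>0$ (and $X_0=0$ at $t=0$).
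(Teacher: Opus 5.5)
Your proposal is correct and follows essentially the same route as the paper's proof. Both arguments use saturated consistency $\Gamma(t)=t$ and the a.e.\ quotient formula from the It\^o identities. Both then use continuity of the moments, with no regularity assumed on $\gamma$, together with right-continuity to remove jumps, and evenness to get $\mathsf D(0,0)=0$. Finally, both bootstrap $C^r\Rightarrow C^{r+1}$ through Lemma~\ref{lem:zt-polynomial-moment-regularity}.
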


\begin{proof}
Since every $t<1$ belongs to $S$, the consistency condition \eqref{eq:zt-consistency-stability} gives
\begin{equation}
 \Gamma(t)=t\qquad(0\leq t<1).
 \label{eq:zt-saturated-consistency}
\end{equation}
Consequently, we have
\begin{align}
 \E\,\mathsf C(t,X_t)^2
 &\stackref{=}{\eqref{eq:zt-saturated-consistency},\,\eqref{eq:zt-Gamma-prime}}1,                                             \label{eq:zt-C-second-moment-one}\\
 \E\,\mathsf D(t,X_t)^2
 &\stackref{=}{\eqref{eq:zt-saturated-consistency},\,\eqref{eq:zt-Gamma-prime-integral}}
 2\gamma(t)\E\,\mathsf C(t,X_t)^3
 \quad\text{for a.e. }t<1.                                   \label{eq:zt-gamma-quotient-ae}
\end{align}
The denominator in \eqref{eq:zt-gamma-quotient-ae} is strictly positive by Lemma~\ref{lem:zt-PDE-facts}.  Hence, with
\begin{equation}
 \overline\gamma(t)
 :=\frac{\E\,\mathsf D(t,X_t)^2}{2\E\,\mathsf C(t,X_t)^3},
 \label{eq:zt-gamma-quotient}
\end{equation}
we have $\gamma=\overline\gamma$ almost everywhere. The first part of Lemma~\ref{lem:zt-polynomial-moment-regularity}, applied on every $[0,T]$ with $T<1$, makes the numerator and denominator in \eqref{eq:zt-gamma-quotient} continuous before any continuity of $\gamma$ is known. Thus $\overline\gamma$ is continuous. Approaching each $t<1$ from the right through the full-measure equality set and using right continuity gives $\gamma(t)=\overline\gamma(t)$, so $\gamma$ is continuous. At time zero, $X_0=0$ and evenness of $u(0,\cdot)$ gives $\mathsf D(0,0)=0$. Formula \eqref{eq:zt-gamma-quotient} then yields $\gamma(0)=0$.

Fix $T<1$. The denominator in \eqref{eq:zt-gamma-quotient} has a positive minimum on $[0,T]$. By the definitions in Lemma~\ref{lem:zt-polynomial-moment-regularity} and \eqref{eq:zt-BCD-shorthand},
\begin{equation*}
 Z_2(t)=\mathsf C(t,X_t)\quad\text{and}\quad Z_3(t)=\mathsf D(t,X_t).
\end{equation*}
If $\gamma\in C^r([0,T])$, apply Lemma~\ref{lem:zt-polynomial-moment-regularity} first to the polynomial $(z_1,z_2,z_3)\mapsto z_3^2$ and then to the polynomial $(z_1,z_2)\mapsto z_2^3$. The numerator and denominator in \eqref{eq:zt-gamma-quotient} are then $C^{r+1}$, so the quotient gives $\gamma\in C^{r+1}([0,T])$. Starting from continuity and iterating yields $\gamma\in C^\infty([0,T])$, including one-sided smoothness at zero. Since $T<1$ was arbitrary, $\gamma\in C^\infty([0,1))$. Since $\gamma$ is nondecreasing, its classical derivative is nonnegative, and its Stieltjes measure is $\mathrm d\gamma(t)=\gamma'(t)\dd t$.
\end{proof}

\subsection{Completion of the zero-temperature proof}
\label{sec:zt-completion}

\begin{proof}[Proof of Theorem~\ref{thm:zero-temperature}]
Suppose that $\overline S^{\,[0,1]}\neq[0,1]$.  Because $0\in S$, a nonempty connected component of the relatively open set $[0,1]\setminus\overline S^{\,[0,1]}$ either lies between two points of $S$ in $[0,1)$ or reaches the endpoint one.  In the first case, $\gamma$ is constant on an internal gap, contradicting Proposition~\ref{prop:zt-no-internal-gap}.  In the second case, it is constant on a terminal gap, contradicting Proposition~\ref{prop:zt-no-terminal-gap}.  Therefore
\begin{equation*}
 \overline S^{\,[0,1]}
 \stackref{=}{\mathrm{P.\,\ref{prop:zt-no-internal-gap}},\,\mathrm{P.\,\ref{prop:zt-no-terminal-gap}}}
 [0,1],
\end{equation*}
and, since $S$ is closed relative to $[0,1)$, it follows that $S=[0,1)$.  Proposition~\ref{prop:zt-smoothness} now gives a nonnegative function
\begin{equation*}
 \rho_\infty=\gamma'=\gamma_\star'\in C^\infty([0,1))
\end{equation*}
such that $\nu_\star(\mathrm d t)=\rho_\infty(t)\dd t$.  This proves \eqref{eq:zero-density-decomposition}.  Because the support of this measure is the already identified set $S=[0,1)$, it also proves \eqref{eq:zero-main} and completes the theorem.
\end{proof}

\appendix

\section{\texorpdfstring{Finite Cole--Hopf inequalities for $\mathsf K$ and $\mathsf J$}{Finite Cole--Hopf inequalities for K and J}}
\label{app:finite-KJ}

This appendix proves Proposition~\ref{prop:zt-finite-KJ}. The inequalities and the organization of the proof are due to \cite[Section~3]{Lopatto2026v2}; the presentation below is restricted to the finite-cascade statement needed in this paper and includes its analytic inputs so that the zero-temperature proof is self-contained.

For $a>0$ and $r\geq0$, recall
\begin{equation}
 \mathcal T_{a,r}f(x)
 :=\frac1a\log P_r(e^{af})(x)
 \quad\text{and}\quad
 P_rg(x):=\E g(x+\sqrt r Z).
 \label{eq:app-Cole-Hopf}
\end{equation}
We first record only the endpoint regularity needed by the maximum principles below.

\begin{lemma}[Finite-cascade endpoint expansion]
\label{lem:app-endpoint-expansion}
Every function obtained from $\log\cosh$ by finitely many operations in Proposition~\ref{prop:zt-finite-KJ} is even, smooth, strictly convex, and satisfies $-1<f'<1$. All its positive-order derivatives are bounded. Moreover, as $x\to+\infty$,
\begin{equation}
 f(x)=x+d_0+d_1e^{-2x}+d_2e^{-4x}+d_3e^{-6x}
 +O(e^{-8x})\quad\text{and}\quad d_1>0,
 \label{eq:app-spatial-expansion}
\end{equation}
with the corresponding differentiated remainder bounds.

Fix such a function $f$, a parameter $a>0$, and $R<\infty$. Define
\begin{equation*}
 u(r,x):=\mathcal T_{a,r}f(x),\qquad \mathsf B(r,x):=u_x(r,x)
 \quad\text{and}\quad \mathsf C(r,x):=u_{xx}(r,x),
 \qquad \forall (r,x)\in[0,R]\times\R.
\end{equation*}
For every $(r,B)\in[0,R]\times(-1,1)$, define $x(r,B)$ and $\mathsf c(r,B)$ by
\begin{equation*}
 \mathsf B(r,x(r,B)):=B\quad\text{and}\quad \mathsf c(r,B):=\mathsf C(r,x(r,B)).
\end{equation*}
With $\delta:=1-B$, there are smooth functions $c_2,c_3$ such that
\begin{equation}
 \mathsf c(r,B)=2\delta+c_2(r)\delta^2+c_3(r)\delta^3
 +O_R(\delta^4).
 \label{eq:app-slope-endpoint-expansion}
\end{equation}
The remainder in \eqref{eq:app-slope-endpoint-expansion} may be differentiated once in $r$ and three times in $B$, with
\begin{equation}
 \partial_r^i\partial_B^jO_R(\delta^4)
 =O_R(\delta^{4-j}),
 \qquad i\in\{0,1\},\quad 0\leq j\leq3.
 \label{eq:app-slope-endpoint-remainder}
\end{equation}
The expansion at $B=-1$ follows by evenness.
\end{lemma}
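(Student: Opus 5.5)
We argue by induction on the number of operations. For the first statement we track the class $\mathcal F$ of even, smooth, strictly convex functions with $-1<f'<1$, bounded positive-order derivatives, and an expansion \eqref{eq:app-spatial-expansion} with $d_1>0$ and differentiated remainders. Operation (ii) does not change $f$, so only (i) needs attention.

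The base case is $\log\cosh x=x-\log2+\log(1+e^{-2x})$, which gives $d_1=1>0$.

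For the inductive step, write $e^{a\mathcal T_{a,r}f}=P_r(e^{af})$. Evenness and smoothness are preserved because $P_r$ commutes with reflection and convolution against a Gaussian preserves smoothness. Differentiating gives $(\mathcal T_{a,r}f)'=\langle f'\rangle$, an average under the tilted Gaussian measure, so $-1<f'<1$ is preserved. Differentiating once more gives
\begin{equation*}
(\mathcal T_{a,r}f)''=\langle f''\rangle+a\operatorname{Var}(f'),
\end{equation*}
which is positive. The same cumulant formulas express every higher derivative as a polynomial in tilted moments of $f',f'',\dots$, which yields the bounded-derivative claim.

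For the tail expansion, put $e^{af}=e^{a(x+d_0)}\bigl(1+ad_1e^{-2x}+\dots\bigr)$ for large positive arguments. The Gaussian shift then gives $P_r(e^{a(\cdot)})(x)=e^{ax+a^2r/2}$. Each term $e^{(a-2k)y}$ in the expansion becomes $e^{(a-2k)x+(a-2k)^2r/2}$, so the new coefficient of $e^{-2x}$ is $d_1e^{r(2-2a)}>0$ after normalization. The region $y<x/2$ contributes only $O(e^{-cx^2})$ relative error, using $|f(y)|\le |y|+C$. Taking $\frac1a\log$ and expanding $\log(1+\cdot)$ produces the expansion to order $e^{-8x}$, and the remainder can be differentiated because the same splitting applies to the differentiated integrands.

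The second statement is the hardest step. We apply the first statement to $u(r,\cdot)=\mathcal T_{a,r}f$, which lies in $\mathcal F$ for every $r\in[0,R]$. Uniformity in $r\in[0,R]$ follows because the new coefficients are explicit smooth functions of $r$ and the Gaussian tail bounds are uniform on compact $r$-sets.

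Next we differentiate the expansion. Since
\begin{equation*}
\mathsf B=1-2d_1e^{-2x}+O(e^{-4x}),\qquad \mathsf C=4d_1e^{-2x}+O(e^{-4x}),
\end{equation*}
setting $\delta=1-B$ gives $e^{-2x}=\delta/(2d_1)+O(\delta^2)$. Inverting the series through the third correction expresses $e^{-2x(r,B)}$ as a power series in $\delta$ up to $O(\delta^4)$. Substituting this into $\mathsf C$ gives $\mathsf c=2\delta+c_2\delta^2+c_3\delta^3+O(\delta^4)$. The leading coefficient is exactly $2$ because $\mathsf C\approx 2(1-\mathsf B)$ at first order, so $d_1$ cancels there.

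For the remainder bounds \eqref{eq:app-slope-endpoint-remainder}, we use $\partial_B=\mathsf c^{-1}\partial_x$ together with $\mathsf c\asymp\delta$. Each $B$-derivative therefore costs one factor of $\delta^{-1}$ while acting on an $O(e^{-8x})=O(\delta^4)$ remainder whose $x$-derivatives remain $O(\delta^4)$. The $r$-derivative is handled by the fixed-$B$ chain rule $\mathsf c_r=\mathsf C_r+\mathsf C_xx_r$, together with the $r$-differentiated tail expansion.

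The main bookkeeping obstacle is keeping the error terms uniform in $r$ through the series inversion and under differentiation. We would organize this by writing $\mathsf B$ and $\mathsf C$ as smooth functions of $(r,s)$ with $s=e^{-2x}$ near $s=0$, and then applying the implicit function theorem in $(r,s)$. This makes the claimed remainder orders automatic.
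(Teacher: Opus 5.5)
Your proposal is correct and follows essentially the same route as the paper. Both arguments induct through the cascade using the tilted-Gaussian formulas for $u_x$ and $u_{xx}$, apply $P_r$ term by term to the exponentiated tail expansion with the Gaussian integral split at $x/2$ (so that $d_1(r)=d_1e^{2(1-a)r}>0$), and then invert $1-\mathsf B$ as a series in $y=e^{-2x}$, giving leading coefficient $4d_1/(2d_1)=2$.

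The one step the paper makes more concrete is your ``$r$-differentiated tail expansion.'' The paper obtains it from $\partial_rP_rg=\frac12P_rg''$, so the same splitting applies to $(e^{af})''$.

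One small caution on your phrase ``smooth functions of $(r,s)$.'' The order-$e^{-8x}$ expansion only gives $\mathsf B,\mathsf C$ the regularity $C^3$ with bounded fourth derivative in $s$ up to $s=0$. That already suffices for your implicit-function step; full smoothness would need the expansion to all orders, which the same argument provides.
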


\begin{proof}
The assertions are immediate for
\begin{equation*}
 \log\cosh x=x-\log2+e^{-2x}-\frac12e^{-4x}
 +\frac13e^{-6x}+O(e^{-8x}).
\end{equation*}
We show that the asserted class is preserved by $f\mapsto\mathcal T_{a,r}f$; replacing $a$ by a smaller value leaves $f$ unchanged. Positivity, evenness, and smoothness follow from Gaussian convolution. Differentiation under the tilted Gaussian law gives
\begin{equation*}
 u_x=\langle f'\rangle
 \quad\text{and}\quad
 u_{xx}=\langle f''\rangle+a\Var(f')>0,
\end{equation*}
so $-1<u_x<1$ and strict convexity are preserved.  Repeated differentiation expresses every spatial derivative as a tilted moment of a bounded polynomial in the derivatives of $f$, proving the stated bounds.

Put $y:=e^{-2x}$.  Exponentiating \eqref{eq:app-spatial-expansion}, applying $P_r$, and using
\begin{equation*}
 P_r(e^{(a-2k)x})
 =e^{(a-2k)x+(a-2k)^2r/2}
\end{equation*}
shows, after factoring out $e^{ax+a^2r/2}$ and taking the logarithm, that $u(r,\cdot)$ again has the form \eqref{eq:app-spatial-expansion}.  The leading coefficient becomes
\begin{equation}
 d_1(r)=d_1(0)e^{2(1-a)r}>0.
 \label{eq:app-leading-coefficient}
\end{equation}
For completeness, the remainder estimate is justified by splitting the Gaussian integral into $\{x+\sqrt rZ\geq x/2\}$ and its complement.  On the first event the differentiated $O(e^{-8x})$ bound may be integrated term by term, uniformly for $r\in[0,R]$; the complementary Gaussian tail is $O_R(e^{-Nx})$ for every $N$.  Differentiation in $r$ is obtained from $\partial_rP_rg=\frac12P_rg''$.  This proves the required uniform differentiated expansion and closes the induction through a finite cascade.

Differentiate the spatial expansion, still writing $y:=e^{-2x}$:
\begin{align*}
 1-B&=2d_1y+4d_2y^2+6d_3y^3+O_R(y^4),\\
 \mathsf c&=4d_1y+16d_2y^2+36d_3y^3+O_R(y^4).
\end{align*}
Because $d_1$ is bounded away from zero on $[0,R]$, the first relation has a uniform inverse
\begin{equation*}
 y=b_1(r)\delta+b_2(r)\delta^2+b_3(r)\delta^3
 +O_R(\delta^4).
\end{equation*}
The differentiated inverse-function theorem gives the remainder bounds through the orders stated in \eqref{eq:app-slope-endpoint-remainder}.  Substitution into the expansion for $\mathsf c$ proves \eqref{eq:app-slope-endpoint-expansion}; its linear coefficient is $4d_1/(2d_1)=2$.
\end{proof}

We use a standard maximum principle in which the diffusion coefficient may vanish at the slope endpoints.

\begin{lemma}[Degenerate-strip comparison]
\label{lem:app-comparison}
Let $v$ be continuous on $[0,R]\times[\ell_-,\ell_+]$ and $C^{1,2}$ in the open strip with positive time.  Suppose
\begin{equation*}
 v_r=dv_{BB}+bv_B+cv+F,
\end{equation*}
where $d>0$ in the open strip, $c$ is bounded above, and all coefficients are continuous on compact subsets.  If $F\geq0$ and $v$ is nonnegative at time zero and on both spatial boundaries, then $v\geq0$ everywhere.  The assertion with all signs reversed also holds.
\end{lemma}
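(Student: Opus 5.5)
We argue by contradiction via a perturbation that turns the nonstrict inequalities into a strict interior minimum, the standard weak maximum principle for parabolic operators. Since $c$ is only bounded above, fix $\lambda>\sup c$ (on the strip, or on each compact piece where it is bounded) and set $\widetilde v:=e^{-\lambda r}v$. Then
\begin{equation*}
 \widetilde v_r=d\widetilde v_{BB}+b\widetilde v_B+(c-\lambda)\widetilde v+e^{-\lambda r}F,
\end{equation*}
with $c-\lambda<0$. The sign conditions are unchanged: $\widetilde v\geq0$ at $r=0$ and on $B=\ell_\pm$, and $e^{-\lambda r}F\geq0$. It therefore suffices to prove the claim when $c<0$.

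Next I would add a small positive perturbation to make the inequality strict. Put $v_\varepsilon:=\widetilde v+\varepsilon(1+r)$ for $\varepsilon>0$. Then
\begin{equation*}
 \partial_rv_\varepsilon-dv_{\varepsilon,BB}-bv_{\varepsilon,B}-(c-\lambda)v_\varepsilon
 =e^{-\lambda r}F+\varepsilon-(c-\lambda)\varepsilon(1+r)>0.
\end{equation*}
Also $v_\varepsilon>0$ on the parabolic boundary. Suppose $\min v_\varepsilon<0$ on the compact set $[0,R]\times[\ell_-,\ell_+]$. Continuity gives a first time $r_*>0$ and a point $B_*$ with $v_\varepsilon(r_*,B_*)=0$ and $v_\varepsilon\geq0$ on $[0,r_*]\times[\ell_-,\ell_+]$. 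Because $v_\varepsilon>0$ on the lateral boundary, $B_*\in(\ell_-,\ell_+)$, so $(r_*,B_*)$ lies in the open strip where $v$ is $C^{1,2}$ and $d>0$. At this point $v_{\varepsilon,B}=0$, $v_{\varepsilon,BB}\geq0$, and $v_{\varepsilon,r}\leq0$; the last is one-sided in $r$ and valid even when $r_*=R$. Substituting into the strict inequality gives
\begin{equation*}
 0\geq v_{\varepsilon,r}>d\,v_{\varepsilon,BB}+(c-\lambda)\cdot0\geq0,
\end{equation*}
which is a contradiction. Hence $v_\varepsilon\geq0$, and letting $\varepsilon\downarrow0$ gives $v\geq0$. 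The reversed-sign statement follows by applying this to $-v$ and $-F$.

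The main obstacle is that the diffusion coefficient degenerates on the lateral boundary and the coefficients are only locally bounded, so no global estimate on $b$ or $d$ is available. The first-touch argument avoids both issues. It uses $d>0$ only at the interior touching point, where $d\,v_{BB}\geq0$ holds for any $d\geq0$. It never needs bounds on $b$, since $v_B=0$ there. The lateral boundary is excluded because $v$ is continuous up to it and strictly positive there after the perturbation. The one point to check is that ``$c$ bounded above'' is a global upper bound, so that a single $\lambda$ works; if it were only local, one would apply the argument on $[0,R]\times[\ell_-+\eta,\ell_+-\eta]$, but that would require positivity on the interior lines, which is not available. I therefore read the hypothesis as a global upper bound, which suffices for the applications in Appendix~\ref{app:finite-KJ}.
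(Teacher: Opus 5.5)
Your proof is correct and follows essentially the same route as the paper: multiply by $e^{-\lambda r}$ with $\lambda>\sup c$, then reach a contradiction at an interior point of the strip, using there only $d\geq0$ and the nonpositive left time derivative. The one difference is that your extra perturbation $\varepsilon(1+r)$ and the first-touch argument are unnecessary. The paper takes a negative global minimum of $e^{-\lambda r}v$ directly, and at that point the term $(c-\lambda)e^{-\lambda r}v>0$ already gives the strict inequality.
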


\begin{proof}
Choose $\lambda>\sup c$ and apply the interior minimum argument to $e^{-\lambda r}v$.  A negative minimum cannot occur at the prescribed boundary.  At an interior negative minimum the equation gives a strictly positive time derivative, contradicting the nonpositive left time derivative there.  Applying the result to $-v$ gives the reversed statement.  No positivity of $d$ at the spatial endpoints is used.
\end{proof}

We now prove the stated inequalities. Fix $a>0$ and let $u(r,x):=\mathcal T_{a,r}f(x)$. Define $\mathsf B(r,x)$, $x(r,B)$, and $\mathsf c(r,B)$ as in Lemma~\ref{lem:app-endpoint-expansion}. For every $(r,B)\in[0,\infty)\times(-1,1)$, define
\begin{equation}
 \mathsf A(r,B):=-\frac{\mathsf c_B(r,B)}{2B},\qquad \mathsf K(r,B):=\mathsf A(r,B)-a
 \quad\text{and}\quad \mathsf J(r,B):=-a-\frac12\mathsf c_{BB}(r,B).
 \label{eq:app-AKJ}
\end{equation}
Evenness supplies the smooth values at $B=0$, and differentiation gives
\begin{equation}
 \mathsf c_B=-2(a+\mathsf K)B\quad\text{and}\quad
 \mathsf J=\mathsf K+B\mathsf K_B=(B\mathsf K)_B.
 \label{eq:app-KJ-identities}
\end{equation}
Unless arguments are displayed explicitly in the remainder of this appendix, $\mathsf c,\mathsf A,\mathsf K$, and $\mathsf J$ mean the functions in \eqref{eq:app-AKJ} evaluated at $(r,B)$, and every subscript $B$ denotes $\partial_B$.

\begin{proposition}[Propagation for fixed $a$]
\label{prop:app-fixed-parameter}
Suppose that at $r=0$,
\begin{equation}
 \mathsf K,\mathsf K_B,\mathsf J,\mathsf J_B\geq0\quad\text{and}\quad (\mathsf c^{3/2}\mathsf J)_B\leq0
 \quad(0\leq B<1).
 \label{eq:app-initial-inequalities}
\end{equation}
Then the same inequalities hold for every $r\geq0$.
\end{proposition}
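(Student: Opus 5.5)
We derive closed evolution equations at fixed slope $B$ for $\mathsf c$, $\mathsf K$, $\mathsf J$, $\mathsf J_B$ and $\mathsf c^{3/2}\mathsf J$ under $r\mapsto\mathcal T_{a,r}f$, and propagate the signs with the degenerate-strip comparison Lemma~\ref{lem:app-comparison}.

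\textbf{Step 1: the evolution of $\mathsf c$.} The function $u=\mathcal T_{a,r}f$ solves $u_r=\frac12(u_{xx}+au_x^2)$. Repeating the computation of Lemma~\ref{lem:zt-fixed-slope} with the time direction reversed and $m=a$ gives, at fixed $B$,
\begin{equation*}
 x_r=\mathsf K B
 \quad\text{and}\quad
 \mathsf c_r=-\mathsf c^2\mathsf J.
\end{equation*}
We then express $\mathsf J=-a-\frac12\mathsf c_{BB}$. This turns the second identity into a closed quasilinear parabolic equation
\begin{equation*}
 \mathsf c_r=\tfrac12\mathsf c^2\mathsf c_{BB}+a\mathsf c^2
\end{equation*}
on the strip $(-1,1)$. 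Its diffusion coefficient $\frac12\mathsf c^2$ is positive in the interior and vanishes at $B=\pm1$. This degeneracy is exactly the situation Lemma~\ref{lem:app-comparison} allows.

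\textbf{Step 2: equations for the derived quantities.} Differentiate the $\mathsf c$-equation in $B$ to get linear equations for $\mathsf K$, $\mathsf J$ and $\mathsf J_B$. Each has the form $v_r=dv_{BB}+bv_B+cv+F$ with $d=\frac12\mathsf c^2$. The zeroth-order coefficients are built from $\mathsf c\mathsf c_{BB}$, $\mathsf c_B^2$ and similar terms, and these are bounded by the derivative bounds in Lemma~\ref{lem:app-endpoint-expansion}. The source terms $F$ are products of the other quantities. I expect the following pattern.
\begin{itemize}
\item[(i)] $\mathsf J$ satisfies a homogeneous equation (for instance $\mathsf J_r=\frac12\mathsf c^2\mathsf J_{BB}+\dots$), so its sign propagates directly.
\item[(ii)] $\mathsf K=\frac1B\int_0^B\mathsf J$ and $\mathsf K_B=(\mathsf J-\mathsf K)/B$ inherit signs once $\mathsf J$ and $\mathsf J_B$ are controlled. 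For $\mathsf K_B$, write $\mathsf K_B=\frac1{B^2}\int_0^B\theta\,\mathsf J_B(\theta)\dd\theta$, mirroring the formulas in the proof of Proposition~\ref{prop:zt-zero-temp-KJ}.
\item[(iii)] $\mathsf J_B$ has a source term involving $\mathsf J^2$ and $\mathsf z\mathsf J$, which is nonnegative under the induction hypotheses.
\item[(iv)] $V:=\mathsf c^{3/2}\mathsf J$, or rather $V_B$, satisfies an equation whose source is nonpositive given the other inequalities.
\end{itemize}
To make these sources usable, run a joint comparison argument. At the first time one of the inequalities fails, all the others still hold, and each source has the right sign. Perturb by $\pm\epsilon e^{\lambda r}$ to obtain strictness.

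\textbf{Step 3: boundary values at $B=0$ and $B=\pm1$.} By evenness it suffices to work on $[0,1)$, with boundary data at $B=0$ and $B=1$.
\begin{itemize}
\item At $B=0$: $\mathsf J_B$ and $\mathsf K_B$ are odd, so they vanish there. For $\mathsf K$ and $\mathsf J$, the point $B=0$ is not a boundary; we run the comparison on $(-1,1)$ using evenness.
\item At $B=1$: use the expansion \eqref{eq:app-slope-endpoint-expansion}. Writing $\delta=1-B$, it gives
\begin{equation*}
 \mathsf c_{BB}=2c_2+6c_3\delta+O(\delta^2).
\end{equation*}
Hence $\mathsf J\to-a-c_2(r)$, and $\mathsf J_B$, $V_B$ have explicit limits in terms of $c_2,c_3$. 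These coefficients satisfy ODEs in $r$ obtained by substituting the expansion into the $\mathsf c$-equation. The boundary signs are then checked from those ODEs, and the remainder bounds \eqref{eq:app-slope-endpoint-remainder} let the comparison lemma be applied on $[0,1-\eta]$ with controlled error as $\eta\downarrow0$.
\end{itemize}

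\textbf{Expected main obstacle.} The hardest step is the $(\mathsf c^{3/2}\mathsf J)_B\le0$ inequality. Its equation has third-order structure, and the choice of exponent $3/2$ must make the cross terms combine into a perfect square or a sign-definite expression. My first attempt is to compute the equation for $V$ and then for $V_B$ directly, substituting $\mathsf c_B=-2(a+\mathsf K)B$. I would then check whether the residual source is a negative multiple of $(\mathsf J-\mathsf K)^2$ or of $\mathsf J\,\mathsf K_B$, using $\mathsf c\mathsf J_B\le3\mathsf z\mathsf J$ in its rearranged form. The boundary value of $V_B$ at $B=1$ also needs care, because $\mathsf c^{3/2}\to0$ there: $V_B=O(\delta^{1/2})$, so the boundary condition is $0$ and must be approached with the exact leading asymptotics.
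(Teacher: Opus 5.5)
Your architecture is the paper's. It has the same components: the fixed-slope law $\mathsf c_r=-\mathsf c^2\mathsf J$; a homogeneous degenerate equation for $\mathsf J$ on $[-1,1]$ with endpoint value $j_\infty=-a-c_2$; the averaging identities for $\mathsf K$ and $\mathsf K_B$; comparison for $\mathsf J_B$ on $[0,1]$ with endpoint value $3c_3$; and comparison for $(\mathsf c^{3/2}\mathsf J)_B$ with zero boundary data. Two simplifications are available.
\begin{enumerate}
\item No joint first-failure argument is needed, because the system is triangular. First $\mathsf J\ge0$, since $j_\infty'=4j_\infty$. This gives $\mathsf K\ge0$ and hence $\mathsf c_B=-2(a+\mathsf K)B\le0$. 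Then the $\mathsf J_B$-equation has source $-6\mathsf c_B(\mathsf J+a)\mathsf J\ge0$ and endpoint value $\kappa_\infty=3c_3$ with $\kappa_\infty'=12\kappa_\infty+12(j_\infty+a)j_\infty$. Only then does one treat the last inequality.
\item Lemma~\ref{lem:app-comparison} already allows $d=\tfrac12\mathsf c^2$ to vanish at $B=1$, so no truncation to $[0,1-\eta]$ is needed.
\end{enumerate}

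The gap is the step you flag as the main obstacle. You never produce the equation for $(\mathsf c^{3/2}\mathsf J)_B$. The mechanism you anticipate is also not what happens: there is no perfect square in $\mathsf J-\mathsf K$ and no multiple of $\mathsf J\mathsf K_B$. The target inequality $\mathsf c\mathsf J_B\le3\mathsf z\mathsf J$ plays no role in signing the source; using it merely moves a multiple of the unknown into the zero-order coefficient.

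What works is to substitute $\mathsf J=\mathsf c^{-3/2}\mathcal E$ into the $\mathsf J$-equation, which gives
\[
 \mathcal E_r=\tfrac{\mathsf c^2}{2}\mathcal E_{BB}+\tfrac{\mathsf c\mathsf c_B}{2}\mathcal E_B-V\mathcal E-\Lambda\mathcal E^2,
 \qquad V=\tfrac18\mathsf c_B^2+\tfrac12\mathsf c(\mathsf J+a),\quad \Lambda=\tfrac32\mathsf c^{-1/2}.
\]
The choice of exponent $3/2$ is what makes the $\mathsf c_B(\mathsf J+a)$ contributions to $V_B$ cancel, via $\mathsf c_{BB}=-2(\mathsf J+a)$. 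This leaves $V_B=\tfrac{\mathsf c}{2}\mathsf J_B$, while $\Lambda_B=-\tfrac34\mathsf c^{-3/2}\mathsf c_B$. Differentiating once, $\mathcal F:=\mathcal E_B$ satisfies a linear equation whose zero-order coefficient is bounded above (note $\Lambda\mathcal E=\tfrac32\mathsf c\mathsf J$). Its source $-V_B\mathcal E-\Lambda_B\mathcal E^2$ is nonpositive using only $\mathsf J,\mathsf J_B\ge0$ and $\mathsf c_B\le0$. For the boundary data, $\mathcal F(r,0)=0$ by parity and $\mathcal F=-3\sqrt2\,j_\infty(1-B)^{1/2}+O((1-B)^{3/2})$ near $B=1$. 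The reversed comparison principle then gives $\mathcal F\le0$.
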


\begin{proof}
All derivatives in this proof are taken at fixed $B$.  From
\begin{equation*}
 u_r=\frac12(u_{xx}+au_x^2)
\end{equation*}
and the identity $B=u_x(r,x(r,B))$, direct differentiation gives
\begin{equation}
 x_r=\mathsf K B\quad\text{and}\quad \mathsf c_r=-\mathsf c^2\mathsf J.
 \label{eq:app-xC-flow}
\end{equation}
Two $B$-derivatives of the second equation yield
\begin{equation}
 \mathsf J_r=\frac{\mathsf c^2}{2}\mathsf J_{BB}+2\mathsf c\mathsf c_B\mathsf J_B
 +\bigl(\mathsf c_B^2-2\mathsf c(\mathsf J+a)\bigr)\mathsf J.
 \label{eq:app-J-PDE}
\end{equation}
For $\kappa:=\mathsf J_B$, one further differentiation gives
\begin{align}
 \kappa_r=&\frac{\mathsf c^2}{2}\kappa_{BB}+3\mathsf c\mathsf c_B\kappa_B
 +\bigl(3\mathsf c_B^2-6\mathsf c(\mathsf J+a)-2\mathsf c\mathsf J\bigr)\kappa
 -6\mathsf c_B(\mathsf J+a)\mathsf J.
 \label{eq:app-kappa-PDE}
\end{align}

We first control the slope endpoints.  Substitute \eqref{eq:app-slope-endpoint-expansion} into $\mathsf c_r=-\mathsf c^2\mathsf J$.  Comparing the coefficients of $\delta^2$ and $\delta^3$ gives
\begin{equation}
 c_2'=4(a+c_2)\quad\text{and}\quad
 c_3'=12c_3+4c_2(a+c_2).
 \label{eq:app-c2c3-flow}
\end{equation}
Set
\begin{equation*}
 j_\infty:=\mathsf J(r,1)=-a-c_2(r)\quad\text{and}\quad
 \kappa_\infty:=\mathsf J_B(r,1)=3c_3(r).
\end{equation*}
Equation \eqref{eq:app-c2c3-flow} becomes
\begin{equation}
 j_\infty'=4j_\infty\quad\text{and}\quad
 \kappa_\infty'=12\kappa_\infty
 +12(j_\infty+a)j_\infty.
 \label{eq:app-endpoint-flow}
\end{equation}
The initial inequalities therefore imply $j_\infty,\kappa_\infty\geq0$ at every later time.

Apply Lemma~\ref{lem:app-comparison} to \eqref{eq:app-J-PDE} on $-1\leq B\leq1$.  Its initial value is nonnegative, and both endpoint values equal $j_\infty\geq0$. Lemma~\ref{lem:app-endpoint-expansion} verifies the required boundary regularity and boundedness of the zero-order coefficient.  Hence
\begin{equation}
 \mathsf J\geq0.
 \label{eq:app-J-positive}
\end{equation}
Since $B\mathsf K(B)\to0$ as $B\downarrow0$, integration of \eqref{eq:app-KJ-identities} gives
\begin{equation}
 B\mathsf K(B)=\int_0^B\mathsf J(v)\dd v.
 \label{eq:app-K-average}
\end{equation}
Thus $\mathsf K\geq0$ and $\mathsf c_B=-2(a+\mathsf K)B\leq0$ for $B\geq0$.

The boundary values for $\kappa$ on $[0,1]$ are $\kappa(r,0)=0$ by parity and $\kappa(r,1)=\kappa_\infty(r)\geq0$.  The source in \eqref{eq:app-kappa-PDE} is nonnegative because $\mathsf c_B\leq0$, $\mathsf J\geq0$, and $\mathsf J+a>0$.  A second application of the comparison lemma gives
\begin{equation}
 \mathsf J_B=\kappa\geq0.
 \label{eq:app-JB-positive}
\end{equation}
The function $\mathsf J$ is therefore nondecreasing, so
\begin{equation*}
 \mathsf K(B)=\frac1B\int_0^B\mathsf J(v)\dd v\leq \mathsf J(B).
\end{equation*}
Together with $\mathsf J=\mathsf K+B\mathsf K_B$, this proves $\mathsf K_B\geq0$.

It remains to propagate the final inequality.  Define
\begin{equation*}
 \mathcal E:=\mathsf c^{3/2}\mathsf J\quad\text{and}\quad \mathcal F:=\mathcal E_B.
\end{equation*}
Using \eqref{eq:app-xC-flow} and \eqref{eq:app-J-PDE}, one obtains
\begin{equation}
 \mathcal E_r
 =\frac{\mathsf c^2}{2}\mathcal E_{BB}
 +\frac{\mathsf c\mathsf c_B}{2}\mathcal E_B
 -V\mathcal E-\Lambda\mathcal E^2,
 \label{eq:app-E-PDE}
\end{equation}
where
\begin{equation*}
 V:=\frac{\mathsf c_B^2}{8}+\frac{\mathsf c(\mathsf J+a)}2\quad\text{and}\quad
 \Lambda:=\frac32\mathsf c^{-1/2}.
\end{equation*}
Since $\mathsf c_{BB}=-2(\mathsf J+a)$,
\begin{equation}
 V_B=\frac{\mathsf c}{2}\mathsf J_B\quad\text{and}\quad
 \Lambda_B=-\frac34\mathsf c^{-3/2}\mathsf c_B.
 \label{eq:app-VLambda-derivatives}
\end{equation}
Differentiating \eqref{eq:app-E-PDE} gives
\begin{align}
 \mathcal F_r={}&\frac{\mathsf c^2}{2}\mathcal F_{BB}
 +\frac{3\mathsf c\mathsf c_B}{2}\mathcal F_B
 +\left[\left(\frac{\mathsf c\mathsf c_B}{2}\right)_B
      -V-2\Lambda\mathcal E\right]\mathcal F
 -V_B\mathcal E-\Lambda_B\mathcal E^2.
 \label{eq:app-F-PDE}
\end{align}
Equations \eqref{eq:app-J-positive}, \eqref{eq:app-JB-positive}, and $\mathsf c_B\leq0$ imply
\begin{equation*}
 \mathcal E\geq0,\qquad V_B\geq0\quad\text{and}\quad \Lambda_B\geq0,
\end{equation*}
so the last two terms in \eqref{eq:app-F-PDE} form a nonpositive source.  The apparent singularities at $B=1$ are harmless because
\begin{equation*}
 \Lambda\mathcal E=\frac32\mathsf c\mathsf J\quad\text{and}\quad
 \Lambda_B\mathcal E^2=-\frac34\mathsf c^{3/2}\mathsf c_B\mathsf J^2.
\end{equation*}
The endpoint expansion also gives
\begin{equation*}
 \mathcal F(r,B)
 =-3\sqrt2\,j_\infty(r)(1-B)^{1/2}
 +O_R((1-B)^{3/2}),
\end{equation*}
and hence $\mathcal F(r,1)=0$; parity gives $\mathcal F(r,0)=0$.  The reversed comparison principle applied to \eqref{eq:app-F-PDE} now yields
\begin{equation}
 (\mathsf c^{3/2}\mathsf J)_B=\mathcal F\leq0.
 \label{eq:app-F-negative}
\end{equation}
This completes the fixed-parameter propagation.
\end{proof}

\begin{proof}[Completion of the proof of
Proposition~\ref{prop:zt-finite-KJ}] It remains to check the replacement of $a$ by a smaller value and the initial function. Keep $f$, hence $B$ and $\mathsf c$, fixed, and let $b\in(0,a)$. Superscripts indicate which parameter is used in \eqref{eq:app-AKJ}. Then
\begin{equation*}
 \mathsf K^{(b)}=\mathsf K^{(a)}+a-b,\qquad
 \mathsf J^{(b)}=\mathsf J^{(a)}+a-b\quad\text{and}\quad
 (\mathsf J^{(b)})_B=(\mathsf J^{(a)})_B.
\end{equation*}
Moreover,
\begin{equation*}
 (\mathsf c^{3/2}\mathsf J^{(b)})_B
 =(\mathsf c^{3/2}\mathsf J^{(a)})_B
 +\frac32(a-b)\mathsf c^{1/2}\mathsf c_B
 \leq(\mathsf c^{3/2}\mathsf J^{(a)})_B.
\end{equation*}
Thus every inequality is preserved when $a$ is replaced by $b\in(0,a)$.

For the initial pair $(f,a)=(\log\cosh,1)$,
\begin{equation*}
 \mathsf B(0,x)=\tanh x,\qquad \mathsf c(0,B)=1-B^2
 \quad\text{and}\quad \mathsf K(0,B)=\mathsf J(0,B)=0.
\end{equation*}
The stated inequalities therefore hold initially. Proposition~\ref{prop:app-fixed-parameter} and the parameter-decrease calculation propagate them through every finite cascade. Finally, with $\mathsf z=-\mathsf c_B/2$,
\begin{equation*}
 (\mathsf c^{3/2}\mathsf J)_B=\mathsf c^{1/2}(\mathsf c\mathsf J_B-3\mathsf z\mathsf J),
\end{equation*}
so \eqref{eq:app-F-negative} implies $\mathsf c\mathsf J_B\leq3\mathsf z\mathsf J$.
\end{proof}

\section{Proofs of technical lemmas}
\label{app:technical-lemmas}

\subsection{Proof of Lemma~\ref{lem:ft-endpoint-density}}
\label{app:ft-endpoint-density}

The argument uses only preservation of even unimodality by the heat flow; the log-concavity and higher derivative inequalities of a general transformed-density theorem are not needed.  The Brownian-bridge identity below is the initial Girsanov calculation also used in \cite[proof of Proposition~7.1]{Lopatto2026}; it is reproduced here in full.  Fix $\beta>1$ throughout.

For $\tau>0$, let
\begin{equation}
 g_\tau(x):=\frac1{\sqrt{2\pi\tau}}
 \exp\left(-\frac{x^2}{2\tau}\right).
 \label{eq:app-ft-Gaussian-kernel}
\end{equation}
For $s>0$, let $\mathfrak b^{\,s}$ be a standard Brownian bridge from zero to zero on $[0,s]$, with covariance
\begin{equation*}
 \E\mathfrak b_t^{\,s}\mathfrak b_r^{\,s}
 =t\wedge r-\frac{tr}{s}.
\end{equation*}

We first derive the representation used in the approximation.  Let $\sigma$ be a probability measure on $[0,1]$, write $\alpha_\sigma(t):=\sigma([0,t])$, and let $u_\sigma$ be its positive-temperature Parisi PDE solution.  Let $p_\sigma(s,\cdot)$ denote the density at time $s$ of
\begin{equation*}
 \dd X_t^\sigma
 =\beta^2\alpha_\sigma(t)
   u_{\sigma,x}(t,X_t^\sigma)\dd t+\beta\dd W_t
 \quad\text{and}\quad X_0^\sigma=0.
\end{equation*}
Then, for $s>0$,
\begin{align}
 &e^{-\alpha_\sigma(s)u_\sigma(s,x)}p_\sigma(s,x)
 =g_{\beta^2s}(x)\,
 \E_{\rm br}\exp\left\{
 -\int_{[0,s]}u_\sigma\left(
 t,\frac tsx+\beta\mathfrak b_t^{\,s}
 \right)\sigma(\mathrm d t)
 \right\}.
 \label{eq:app-ft-bridge-representation}
\end{align}
In particular, the density exists and the two sides are continuous and strictly positive.

Here is a direct verification.  The terminal datum $\log\cosh$ is even, convex, and one-Lipschitz.  The constant-coefficient Cole--Hopf formula, followed by approximation of $\alpha_\sigma$ by step functions, shows that
\begin{equation}
 u_\sigma(t,\cdot)\text{ is even and convex}
 \quad\text{and}\quad |u_{\sigma,x}|\leq1.
 \label{eq:app-ft-u-basic-properties}
\end{equation}
Indeed, evenness and the Lipschitz bound are immediate from the Gaussian formula, while convexity follows from H\"older's inequality when $m>0$ and from Gaussian averaging when $m=0$. These properties pass to the limit under the comparison estimate proved below.

Put $Y_t:=\beta W_t$.  It\^o's formula and the Parisi PDE give
\begin{equation}
 \dd u_\sigma(t,Y_t)
 =-\frac{\beta^2}{2}\alpha_\sigma(t)
   u_{\sigma,x}(t,Y_t)^2\dd t
 +\beta u_{\sigma,x}(t,Y_t)\dd W_t.
 \label{eq:app-ft-Ito-u}
\end{equation}
Regard $\alpha_\sigma$ as a right-continuous function of bounded variation and set $\alpha_\sigma(0-):=0$.  Integration by parts in $\alpha_\sigma(t)u_\sigma(t,Y_t)$ transforms \eqref{eq:app-ft-Ito-u} into
\begin{align*}
 \alpha_\sigma(s)u_\sigma(s,Y_s)
 ={}&\int_{[0,s]}u_\sigma(t,Y_t)\sigma(\mathrm d t)
 -\frac{\beta^2}{2}\int_0^s\alpha_\sigma(t)^2
 u_{\sigma,x}(t,Y_t)^2\dd t\\
 &+\beta\int_0^s\alpha_\sigma(t)
 u_{\sigma,x}(t,Y_t)\dd W_t.
\end{align*}
Consequently,
\begin{equation*}
 Z_s:=\exp\left\{
 \alpha_\sigma(s)u_\sigma(s,Y_s)
 -\int_{[0,s]}u_\sigma(t,Y_t)\sigma(\mathrm d t)
 \right\}
\end{equation*}
is the exponential martingale with stochastic integrand $\beta\alpha_\sigma u_{\sigma,x}$.  It is a true martingale by \eqref{eq:app-ft-u-basic-properties}.  Under the measure with density $Z_s$, Girsanov's theorem turns $Y$ into the diffusion $X^\sigma$. The bounded spatial derivatives of the positive-temperature PDE solution give uniqueness in law for this diffusion. Conditionally on $Y_s=x$, its path is
\begin{equation*}
 Y_t=\frac tsx+\beta\mathfrak b_t^{\,s},
 \qquad 0\leq t\leq s.
\end{equation*}
This proves \eqref{eq:app-ft-bridge-representation}.  The expectation there is finite because $|u_\sigma(t,y)|\leq M+|y|$ for a finite $M$, and the supremum of a Brownian bridge has exponential moments of every order.  The same bound and dominated convergence give continuity in $x$.  The argument may first be carried out for step functions and then passed to the limit; equivalently, \eqref{eq:app-ft-Ito-u} follows from the standard It\^o--Krylov formula.

We next isolate the only order property needed below.  Call a nonnegative function symmetric decreasing if it is even and nonincreasing on $[0,\infty)$.  For $h>0$, define the heat operator with variance $\beta^2h$ by
\begin{equation}
 (\mathsf H_h\varphi)(x)
 :=\int_\R g_{\beta^2h}(x-y)\varphi(y)\dd y
 =\E\varphi(x+\beta\sqrt h Z),
 \qquad Z\sim N(0,1).
 \label{eq:app-ft-heat-convention}
\end{equation}
This operator preserves symmetric decreasing functions.  To see this, write such a function by the layer-cake formula as a positive mixture of indicators of centered intervals.  For $a\geq0$ and $x>0$,
\begin{equation*}
 \frac{\dd}{\dd x}
 \int_{-a}^ag_{\beta^2h}(x-y)\dd y
 =g_{\beta^2h}(x+a)-g_{\beta^2h}(x-a)\leq0.
\end{equation*}
Linearity and monotone convergence prove the assertion.

Suppose now that $\sigma$ is finitely supported.  On an interval on which $\alpha_\sigma=m$ is constant, set
\begin{equation*}
 r_\sigma(t,x)
 :=e^{-m u_\sigma(t,x)}p_\sigma(t,x).
\end{equation*}
The Fokker--Planck equation for $p_\sigma$ and the Parisi PDE are
\begin{align*}
 (p_\sigma)_t
 &=\frac{\beta^2}{2}(p_\sigma)_{xx}
   -\beta^2\partial_x(mu_{\sigma,x}p_\sigma)\qquad\text{and}\qquad
 (u_\sigma)_t
 =-\frac{\beta^2}{2}
   (u_{\sigma,xx}+mu_{\sigma,x}^2).
\end{align*}
Substitution of $p_\sigma=e^{mu_\sigma}r_\sigma$ cancels the drift and all zero-order terms, leaving
\begin{equation}
 (r_\sigma)_t=\frac{\beta^2}{2}(r_\sigma)_{xx}.
 \label{eq:app-ft-r-heat}
\end{equation}
At an atom of $\sigma$ of size $d$ at time $t$, continuity of the diffusion law gives the update
\begin{equation}
 r_\sigma(t,x)
 =e^{-d u_\sigma(t,x)}r_\sigma(t-,x).
 \label{eq:app-ft-r-atom}
\end{equation}
At time zero the transformed measure is a positive multiple of $\delta_0$.  Its first positive heat evolution is a positive multiple of the Gaussian kernel in \eqref{eq:app-ft-Gaussian-kernel}.  Equation \eqref{eq:app-ft-r-heat} and the preceding heat-flow observation preserve the symmetric decreasing property.  At an atom, \eqref{eq:app-ft-u-basic-properties} makes $e^{-d u_\sigma(t,\cdot)}$ positive and symmetric decreasing, so \eqref{eq:app-ft-r-atom} preserves it as well.  Induction through the finitely many atoms proves that $r_\sigma(t,\cdot)$ is symmetric decreasing at every positive time.

We finally pass to the Parisi measure.  Abbreviate
\begin{equation*}
 \mu:=\mu_\beta,\qquad q:=q_\beta,\qquad c:=c_\beta
 \quad\text{and}\quad u:=u_\mu.
\end{equation*}
By Theorem~\ref{thm:finite-temperature-structure}, $q>0$, $\mu([0,q])=1$, and $\mu(\{q\})=c$.  Choose finitely supported measures $\mu_n^-$ on $[0,q)$ with total mass $1-c$ that converge weakly to $\mu|_{[0,q)}$, and set $\mu_n:=\mu_n^-+c\delta_q$. Then $\mu_n$ converges weakly to $\mu$, and their distribution functions satisfy
\begin{equation}
 \lim_{n\to\infty}\int_0^1|\alpha_{\mu_n}(t)
 -\alpha_\mu(t)|\dd t=0.
 \label{eq:app-ft-alpha-approximation}
\end{equation}
For solutions with the same terminal datum and coefficients $\alpha_1,\alpha_2$, comparison with time-dependent constant barriers, using $|u_x|\leq1$, gives
\begin{equation}
 \sup_{t,x}|u_1(t,x)-u_2(t,x)|
 \leq\frac{\beta^2}{2}
 \int_0^1|\alpha_1(t)-\alpha_2(t)|\dd t.
 \label{eq:app-ft-PDE-stability}
\end{equation}
Thus $\lim_{n\to\infty}\|u_{\mu_n}-u\|_\infty=0$.

Since $\alpha_{\mu_n}(q)=\alpha_\mu(q)=1$, the left side of \eqref{eq:app-ft-bridge-representation} at time $q$ is respectively
\begin{equation*}
 f_n(x):=e^{-u_{\mu_n}(q,x)}p_{\mu_n}(q,x)
 \quad\text{and}\quad f_\beta(x):=e^{-u(q,x)}p_\beta(x).
\end{equation*}
For a fixed bridge path and $x$ in a compact set, uniform convergence of $u_{\mu_n}$ and weak convergence of $\mu_n$ give
\begin{align*}
 &\lim_{n\to\infty}\int_{[0,q]}u_{\mu_n}\left(
 t,\frac tqx+\beta\mathfrak b_t^{\,q}
 \right)\mu_n(\mathrm d t)=\int_{[0,q]}u\left(
 t,\frac tqx+\beta\mathfrak b_t^{\,q}
 \right)\mu(\mathrm d t),
\end{align*}
locally uniformly in $x$.  Indeed, as $x$ ranges over a compact set, the corresponding continuous functions of $t\in[0,q]$ form a compact subset of $C([0,q])$, and weak convergence of finite measures is uniform on such a set.  The bound $|u_{\mu_n}(t,y)|\leq M+|y|$, with $M$ independent of $n$, and the exponential moments of the bridge supremum permit dominated convergence in \eqref{eq:app-ft-bridge-representation}.  Hence
\begin{equation}
 \lim_{n\to\infty}f_n=f_\beta
 \quad\text{locally uniformly on }\R.
 \label{eq:app-ft-f-approximation}
\end{equation}
Every $f_n$ is symmetric decreasing by the finite-support argument, so \eqref{eq:app-ft-f-approximation} proves that $f_\beta$ is even and nonincreasing on $[0,\infty)$.  Formula \eqref{eq:app-ft-bridge-representation} gives its continuity and strict positivity.  Finally, \eqref{eq:ft-terminal-value-at-q} gives
\begin{equation*}
 0<f_\beta(x)
 =e^{-\frac{\beta^2}{2}(1-q_\beta)}
   \frac{p_\beta(x)}{\cosh x}
 \leq p_\beta(x).
\end{equation*}
Thus $f_\beta$ is integrable.  Since it is strictly positive, it could be constant on $[0,\infty)$ only if its integral were infinite.  It is therefore nonconstant, completing the proof of Lemma~\ref{lem:ft-endpoint-density}.

\subsection{Proof of Lemma~\ref{lem:zt-parabolic-stability}}\label{a.pf.lem:zt-parabolic-stability}

The comparison principle and the one-Lipschitz property in $x$ give
\begin{equation*}
 \|v_n-v\|_{L^\infty([0,1]\times\R)}
 \stackref{\leq}{\text{comparison}}
 \|g_n-g\|_\infty
 +\frac12\|\alpha_n-\alpha\|_{L^1(0,1)},
\end{equation*}
which proves \eqref{eq:zt-parabolic-uniform-stability}; compare the standard coefficient-stability estimate in \cite[Proposition~2(i), equation~(7)]{ChenHandschyLerman}. Define $\widetilde g_n:=v_n(T',\cdot)$ and $\widetilde g:=v(T',\cdot)$. These functions are convex and one-Lipschitz, and the preceding estimate gives $\lim_{n\to\infty}\|\widetilde g_n-\widetilde g\|_\infty=0$. After reversing time on $[0,T']$, the remaining assertions are the interior derivative estimates and compactness argument in \cite[Proposition~2(ii) and the proof of Proposition~2(iii)]{ChenHandschyLerman}, with $\xi''\equiv1$. Although that proposition is stated with terminal datum $|x|$, its proof uses only convexity, the uniform one-Lipschitz bound, the uniform bound on the coefficients, and the positive distance $T'-T$ from the terminal time. It therefore applies to $\widetilde g_n$ and $\widetilde g$. The cited estimates give the joint continuity and the global bound \eqref{eq:zt-global-parabolic-derivative-bound}, and make every $\partial_x^kv_n$ precompact on $[0,T]\times[-R,R]$. The uniform convergence already proved identifies every subsequential limit with $\partial_x^kv$, which gives \eqref{eq:zt-parabolic-Ck-stability} for the whole sequence.

It remains only to obtain the global convergence of the first derivatives. Let $\varepsilon_n:=\|v_n-v\|_{L^\infty([0,T']\times\R)}$ and let $M$ bound the second derivatives in \eqref{eq:zt-global-parabolic-derivative-bound}. Convexity and forward and backward difference quotients give, for every $h>0$,
\begin{equation*}
 |v_{n,x}(t,x)-v_x(t,x)|
 \leq Mh+\frac{2\varepsilon_n}{h}.
\end{equation*}
If $\varepsilon_n>0$, take $h=\sqrt{\varepsilon_n}$; the case $\varepsilon_n=0$ is immediate. This proves \eqref{eq:zt-global-gradient-convergence}.

\subsection{Proof of Lemma~\ref{lem:zt-regularized-terminal-data}}\label{a.pf.lem:zt-regularized-terminal-data}

Fix $\lambda>0$ and first replace $\gamma\wedge\lambda$ by a nondecreasing step function $\alpha:[0,1)\to[0,\lambda]$. Put
\begin{equation*}
 v(s,x):=u^{h_\lambda,\alpha}(1-s,x)
 \quad\text{and}\quad
 \widehat{\mathsf D}(s,x):=v_{xxx}(s,x),
 \qquad \forall (s,x)\in[0,1]\times\R.
\end{equation*}
Extend $\alpha$ to one by $\alpha(1):=\alpha(1-)$. The rescaled function $w(r,y):=\lambda v(2r/\lambda^2,y/\lambda)$ has initial datum $w(0,y)=\log\cosh y$ and, for every $(r,y)\in[0,\lambda^2/2]\times\R$, solves
\begin{equation*}
 w_r=w_{yy}+m(r)w_y^2
 \quad\text{with}\quad
 m(r):=\lambda^{-1}\alpha(1-2r/\lambda^2)\in[0,1].
\end{equation*}
Extend $m$ by zero on $(\lambda^2/2,\infty)$. Since $m$ is nonincreasing and bounded by one, the spatial regularity and boundedness required below follow from \cite[Theorem~4, Lemma~10, and Corollary~11]{JagannathTobascoDynamic}. On every interval $[s_0,s_1]$ on which $\alpha(1-s)=a$, differentiating the equation for $v$ three times gives
\begin{equation}
 \partial_s\widehat{\mathsf D}
 =\frac12\partial_{xx}\widehat{\mathsf D}
 +av_x\partial_x\widehat{\mathsf D}
 +3av_{xx}\widehat{\mathsf D}.
 \label{eq:zt-regularized-D-equation}
\end{equation}
Evenness gives $\widehat{\mathsf D}(s,0)=0$, while \eqref{eq:zt-terminal-regularization} gives
\begin{equation*}
 \widehat{\mathsf D}(0,x)
 \stackref{=}{\eqref{eq:zt-terminal-regularization}}
 -2\lambda^2\tanh(\lambda x)\sech^2(\lambda x)\leq0,
 \qquad \forall x>0.
\end{equation*}
Choose $M\geq\|3av_{xx}\|_{L^\infty([s_0,s_1]\times\R)}$. Equation \eqref{eq:zt-regularized-D-equation} shows that $e^{-M(s-s_0)}\widehat{\mathsf D}$ satisfies a linear uniformly parabolic equation whose zero-order coefficient is nonpositive. The maximum principle on the unbounded half-line, in the form used in \cite[proof of Lemma~8]{JagannathTobascoDynamic}, preserves $\widehat{\mathsf D}\leq0$ from $s_0$ to $s_1$. Spatial derivatives are continuous at the finitely many jumps of $\alpha$ by the cited regularity result, so induction gives
\begin{equation*}
 \partial_x^3u^{h_\lambda,\alpha}(t,x)\leq0,
 \qquad \forall (t,x)\in[0,1)\times(0,\infty).
\end{equation*}

Choose nondecreasing step functions $\alpha_n:[0,1)\to[0,\lambda]$ such that $\lim_{n\to\infty}\|\alpha_n-\gamma\wedge\lambda\|_{L^1(0,1)}=0$. The comparison principle gives
\begin{equation*}
 \|u^{h_\lambda,\alpha_n}-u_\lambda\|_\infty
 \leq\frac12\|\alpha_n-\gamma\wedge\lambda\|_{L^1(0,1)}.
\end{equation*}
Consequently, Lemma~\ref{lem:zt-parabolic-stability} gives, for every $T<1$,
\begin{equation*}
 \lim_{n\to\infty}\partial_x^3u^{h_\lambda,\alpha_n}
 =\partial_x^3u_\lambda
 \quad\text{locally uniformly on }[0,T]\times\R.
\end{equation*}
Hence
\begin{equation}
 (u_\lambda)_{xxx}(t,x)\leq0,
 \qquad \forall (t,x)\in[0,1)\times(0,\infty).
 \label{eq:zt-ulambda-D-negative}
\end{equation}

The comparison principle, $\|h_\lambda-|\cdot|\|_\infty\leq(\log2)/\lambda$, and $|u_x|,|(u_\lambda)_x|\leq1$ give
\begin{equation*}
 \|u_\lambda-u\|_\infty
 \stackref{\leq}{\text{comparison}}
 \frac{\log2}{\lambda}
 +\frac12\|\gamma-\gamma\wedge\lambda\|_{L^1(0,1)}.
\end{equation*}
The right side tends to zero. Fix $T<T'<1$. Since $\gamma(T')<\infty$, the coefficients $\gamma\wedge\lambda$ are uniformly bounded on $[0,T']$. Lemma~\ref{lem:zt-parabolic-stability}, applied with $(g_n,\alpha_n)=(h_\lambda,\gamma\wedge\lambda)$ and $(g,\alpha)=(|\cdot|,\gamma)$ as $\lambda\to\infty$, gives \eqref{eq:zt-lambda-smooth-convergence}. Finally, for every $(t,x)\in[0,T]\times(0,\infty)$,
\begin{equation*}
 \mathsf D(t,x)
 \stackref{=}{\eqref{eq:zt-lambda-smooth-convergence}}
 \lim_{\lambda\to\infty}\partial_x^3u_\lambda(t,x)
 \stackref{\leq}{\eqref{eq:zt-ulambda-D-negative}}0,
\end{equation*}
which proves \eqref{eq:zt-D-negative} because $T<1$ is arbitrary.

\subsection{Proof of Lemma~\ref{lem:zt-bridge-formula}}\label{a.pf.lem:zt-bridge-formula}

\noindent\emph{Step 1: The bridge formula for step functions and convergence of the PDE data and initial factor.}

Suppose first that $\gamma$ is a step function. Lemma~\ref{lem:zt-Q-dynamics} alternates Gaussian heat evolution and multiplication by $e^{-\Delta\gamma(s)u(s,\cdot)}$. The usual conditioning of Brownian motion on its endpoint gives \eqref{eq:zt-Stieltjes-bridge} with the Stieltjes integral replaced by the finite sum over jump times.

For arbitrary $\gamma\in\cU$, choose nondecreasing step functions $\gamma_n\in\cU$ such that
\begin{align}
 \lim_{n\to\infty}\|\gamma_n-\gamma\|_{L^1(0,1)}&=0
 \quad\text{and}\quad \lim_{n\to\infty}\gamma_n(0)=\gamma(0),\notag\\
 \lim_{n\to\infty}\int_{[0,T]}f(s)\,\mathrm d\gamma_n(s)
 &=\int_{[0,T]}f(s)\,\mathrm d\gamma(s),
 \qquad \forall f\in C([0,T]),
 \label{eq:zt-gamma-weak-Q}
\end{align}
for every continuity point $T<1$ of $\gamma$. If $(t_j)_{j\geq1}$ enumerates the jump points of $\gamma$, the partitions defining $\gamma_n$ may also be chosen so that
\begin{equation*}
 \gamma_n(t_j)-\gamma_n(t_j-)=\gamma(t_j)-\gamma(t_j-),
 \qquad \forall 1\leq j\leq n.
\end{equation*}
In particular, $\lim_{n\to\infty}\gamma_n(s)=\gamma(s)$ at every continuity point $s<1$, while $\lim_{n\to\infty}\gamma_n(t-)=\gamma(t-)$ at every jump point $t<1$.

For every $T<1$, choose a continuity point $T'$ of $\gamma$ with $T<T'<1$. For $j\in\{0,1,2\}$, Lemma~\ref{lem:zt-parabolic-stability}, applied to $u_n=u^{|\cdot|,\gamma_n}$ and $u=u^{|\cdot|,\gamma}$, gives
\begin{equation}
 \lim_{n\to\infty}\partial_x^ju_n=\partial_x^ju
 \quad\text{locally uniformly on }[0,T]\times\R.
 \label{eq:zt-un-smooth-Q}
\end{equation}
For fixed $T<1$, parabolic smoothing and the one-Lipschitz bound also give, uniformly in $n$,
\begin{equation*}
 |u_{n,x}|\leq1\quad\text{and}\quad 0\leq u_{n,xx}\leq M_T
 \quad\text{on }[0,T]\times\R.
\end{equation*}
The control representation with zero control also gives $u_n\geq0$. The initial factors in the bridge formulas for $\gamma_n$ satisfy
\begin{equation}
 \lim_{n\to\infty}c_{0,n}
 =\lim_{n\to\infty}e^{-\gamma_n(0)u_n(0,0)}
 =e^{-\gamma(0)u(0,0)}=c_0,
 \label{eq:zt-c0n-convergence}
\end{equation}
by the convergence of $\gamma_n(0)$ and the uniform convergence of $u_n$.

\smallskip
\noindent\emph{Step 2: Local $C^2$ convergence of the bridge expressions.}

Fix a continuity point $t$ and write a bridge with endpoint $x$ as $B_s^x:=(s/t)x+\widetilde B_s$, where $\widetilde B$ is a centered bridge.  Set
\begin{equation*}
 A_n(x):=\int_{(0,t]}u_n(s,B_s^x)\,\mathrm d\gamma_n(s).
\end{equation*}
For $j\in\{1,2\}$, pathwise differentiation gives
\begin{equation}
 A_n^{(j)}(x)=\int_{(0,t]}\left(\frac{s}{t}\right)^j
 \partial_x^ju_n(s,B_s^x)\,\mathrm d\gamma_n(s).
 \label{eq:zt-An-jth}
\end{equation}
The case $j=1$ is bounded by the one-Lipschitz estimate. For $j=2$, choose a continuity time $T'$ with $t<T'<1$. Equation \eqref{eq:zt-gamma-weak-Q} gives $\sup_n\gamma_n(T')<\infty$, and Lemma~\ref{lem:zt-parabolic-stability}, applied across the positive distance $T'-t$, gives a bound for $u_{n,xx}$ independent of $n$. Since $\sup_n(\mathrm d\gamma_n)((0,t])<\infty$, both quantities in \eqref{eq:zt-An-jth} are therefore uniformly bounded.

Every bridge path has compact range.  For $|x|\leq R$, all arguments $B_s^x$ lie in one random compact set, uniformly in $s$.  For fixed bridge path and $j\in\{0,1,2\}$, put
\begin{equation*}
 g_{n,x}^{(j)}(s)
 :=\left(\frac{s}{t}\right)^j
   \partial_x^ju_n(s,B_s^x)
 \quad\text{and}\quad
 g_x^{(j)}(s)
 :=\left(\frac{s}{t}\right)^j
   \partial_x^ju(s,B_s^x),
\end{equation*}
with the factor $(s/t)^0$ interpreted as one.  Equation \eqref{eq:zt-un-smooth-Q} gives
\begin{equation*}
 \lim_{n\to\infty}
 \sup_{\substack{|x|\leq R\\0\leq s\leq t}}
 |g_{n,x}^{(j)}(s)-g_x^{(j)}(s)|=0.
\end{equation*}
The map $x\mapsto g_x^{(j)}$ is continuous from $[-R,R]$ to $C([0,t])$, because $(s,x)\mapsto B_s^x$ and the relevant spatial derivative of $u$ are jointly continuous on the compact set under consideration.  Hence the family $\{g_x^{(j)}:|x|\leq R\}$ is compact in $C([0,t])$.  Equation \eqref{eq:zt-gamma-weak-Q}, a finite $\varepsilon$-net for this compact family, and uniform boundedness of the masses $\mathrm d\gamma_n([0,t])$ give
\begin{equation*}
 \lim_{n\to\infty}
 \sup_{|x|\leq R}
 \left|\int_{(0,t]}g_x^{(j)}\dd(\gamma_n-\gamma)\right|=0.
\end{equation*}
For the passage from $[0,t]$ to $(0,t]$, we use the already imposed identity $\lim_{n\to\infty}\gamma_n(0)=\gamma(0)$ for the atoms at zero; when $j\geq1$, the factor $(s/t)^j$ also makes the integrand vanish at zero. Combining the last two displays shows pathwise and locally uniformly in $x$ that
\begin{equation}
 \lim_{n\to\infty}A_n^{(j)}=A^{(j)}
 \quad\text{locally uniformly in }x,
 \qquad j\in\{0,1,2\}.
 \label{eq:zt-An-C2-convergence}
\end{equation}
Here $j=0$ uses local uniform convergence of $u_n$, while $j\in\{1,2\}$ uses \eqref{eq:zt-An-jth}. Since
\begin{equation*}
 \partial_xe^{-A_n}=-A_n'e^{-A_n}
 \quad\text{and}\quad
 \partial_{xx}e^{-A_n}=\bigl((A_n')^2-A_n''\bigr)e^{-A_n},
\end{equation*}
the bounds above, $A_n\geq0$, and \eqref{eq:zt-An-C2-convergence} provide an integrable dominator under the bridge expectation. Together with \eqref{eq:zt-c0n-convergence}, this proves local $C^2$ convergence of the complete bridge expression, including its initial prefactor.

Denote the resulting $C^2_{\rm loc}$ bridge limit temporarily by $\widehat Q$. Differentiating the Gaussian prefactor as well as the bridge expectation gives
\begin{equation}
 \sum_{j=0}^2|\partial_x^jQ_n(t,x)|
 \leq C_t(1+|x|^2)p_t(x),
 \label{eq:zt-Q-Gaussian-derivative-bound}
\end{equation}
with $C_t$ uniform in $n$.  The same estimate holds for $\widehat Q$.

\smallskip
\noindent\emph{Step 3: Identification of $\widehat Q$ with $Q$ and treatment of jump points.}

It remains to identify $\widehat Q$ with the quantity defined from the actual diffusion density.  Put
\begin{equation*}
 b_n(s,x):=\gamma_n(s)u_{n,x}(s,x)\quad\text{and}\quad
 b(s,x):=\gamma(s)u_x(s,x).
\end{equation*}
Choose $T'$ with $t<T'<1$ at which $\gamma$ is continuous.  The step approximations may be chosen with $\sup_n\gamma_n(T')<\infty$.  The comparison estimate and the last part of Lemma~\ref{lem:zt-parabolic-stability} give
\begin{equation*}
 \lim_{n\to\infty}
 \sup_{\substack{0\leq s\leq t\\x\in\R}}
 |u_{n,x}(s,x)-u_x(s,x)|=0.
\end{equation*}
Consequently,
\begin{align}
 \int_0^t\|b_n(s,\cdot)-b(s,\cdot)\|_\infty\dd s
 &\leq\|\gamma_n-\gamma\|_{L^1(0,t)}
 +\|\gamma\|_{L^1(0,t)}
   \sup_{s,x}|u_{n,x}-u_x|,\notag\\
 \lim_{n\to\infty}\int_0^t
 \|b_n(s,\cdot)-b(s,\cdot)\|_\infty\dd s&=0.
 \label{eq:zt-drift-L1-convergence}
\end{align}
All these drifts are uniformly bounded on $[0,t]\times\R$.

For completeness, this drift convergence implies convergence of the diffusion laws directly.  Under Wiener measure, let
\begin{equation*}
 Z_n:=\exp\left\{\int_0^tb_n(s,W_s)\dd W_s
 -\frac12\int_0^tb_n(s,W_s)^2\dd s\right\},
\end{equation*}
and define $Z$ with $b$ in place of $b_n$.  The uniform drift bound and \eqref{eq:zt-drift-L1-convergence} imply
\begin{equation*}
 \lim_{n\to\infty}\int_0^t\|b_n-b\|_\infty^2\dd s=0.
\end{equation*}
It\^o's isometry makes the stochastic integrals converge in $L^2$, and the finite-variation terms converge in $L^1$.  Moreover, for some $p>1$,
\begin{equation*}
 \sup_n\E Z_n^p<\infty,
\end{equation*}
because $Z_n^p$ is a stochastic exponential with integrand $pb_n$ times the bounded factor $\exp\{\frac12(p^2-p)\int_0^tb_n^2\}$. Thus $\lim_{n\to\infty}\|Z_n-Z\|_{L^1}=0$. Girsanov's theorem shows that the one-time laws with densities $\rho_t^n$ converge weakly to the law with density $\rho_t$.

Since the fixed $t\in(0,1)$ is a continuity point of $\gamma$,
\begin{equation*}
 \widehat\rho(t,x):=e^{\gamma(t)u(t,x)}\widehat Q(t,x)
 \quad\text{and}\quad
 \lim_{n\to\infty}\rho_t^n(x)
 =\lim_{n\to\infty}e^{\gamma_n(t)u_n(t,x)}Q_n(t,x)
 =\widehat\rho(t,x)
\end{equation*}
locally uniformly. Testing against compactly supported continuous functions and comparing with the preceding weak convergence gives $\widehat\rho(t,\cdot)=\rho_t$ and therefore $\widehat Q(t,\cdot)=\rho_te^{-\gamma(t)u(t,\cdot)}$. This proves \eqref{eq:zt-Q-C2-convergence} and \eqref{eq:zt-Stieltjes-bridge} at every continuity point $t$ of $\gamma$.

If $t\in(0,1)$ is a jump point of $\gamma$, repeat Steps 2 and 3 with $(0,t)$ in place of $(0,t]$ and with $\gamma_n(t-)$ and $\gamma(t-)$ in place of $\gamma_n(t)$ and $\gamma(t)$. This proves the asserted local $C^2$ convergence of $Q_n(t-,\cdot)$ and the identity
\begin{equation*}
 Q(t-,x)=c_0p_t(x)\E_{0\to x}^{\mathrm{BB},t}
 \exp\left\{-\int_{(0,t)}u(s,B_s)\,\mathrm d\gamma(s)\right\},
 \qquad \forall x\in\R.
\end{equation*}
Multiplying this identity by $e^{-(\gamma(t)-\gamma(t-))u(t,x)}$ and applying \eqref{eq:zt-Q-jump} proves \eqref{eq:zt-Stieltjes-bridge} at $t$.

\subsection{Proof of Lemma~\ref{lem:zt-tails}}\label{a.pf.lem:zt-tails}

\noindent\emph{Step 1: Gaussian bounds for $1-\mathsf B(t,x)$.}

Fix $t\in I$ and put
\begin{equation*}
 T:=1-t\quad\text{and}\quad A:=\int_t^1\gamma(s)\dd s<\infty.
\end{equation*}
For $x\in\R$, let $Y^{t,x}:=(Y_s^{t,x})_{t\leq s\leq1}$ solve
\begin{equation}
 \dd Y_s^{t,x}=\gamma(s)u_x(s,Y_s^{t,x})\dd s+\dd W_s,
 \qquad t<s\leq1,
 \quad\text{and}\quad Y_t^{t,x}=x.
 \label{eq:zt-restarted-optimal-SDE}
\end{equation}
Thus $Y^{t,x}$ has the same time-dependent transition dynamics as $X$ in \eqref{eq:zt-optimal-SDE}; the process $X$ starts from $(0,0)$, whereas $Y^{t,x}$ starts from $(t,x)$. Since $|u_x|\leq1$,
\begin{equation*}
 \left|\int_t^1\gamma(s)u_x(s,Y_s^{t,x})\dd s\right|
 \leq\int_t^1\gamma(s)\dd s=A.
\end{equation*}
Differentiating \eqref{eq:zt-general-PDE} in $x$ and applying It\^o's formula along \eqref{eq:zt-restarted-optimal-SDE} gives
\begin{equation*}
 \dd u_x(s,Y_s^{t,x})=u_{xx}(s,Y_s^{t,x})\dd W_s,
 \qquad t<s<1.
\end{equation*}
The bounded process $s\mapsto u_x(s,Y_s^{t,x})$ is therefore a martingale. With $\operatorname{sign}(0):=0$, its terminal value is $\operatorname{sign}(Y_1^{t,x})$, and hence
\begin{equation*}
 \mathsf B(t,x)
 \stackref{=}{\eqref{eq:zt-BCD-shorthand}}
 u_x(t,x)=\E\operatorname{sign}(Y_1^{t,x}).
\end{equation*}
Since $1-\operatorname{sign}(z)$ equals $2$, $1$, or $0$ according as $z<0$, $z=0$, or $z>0$, respectively, we obtain
\begin{equation*}
 1-\mathsf B(t,x)
 =\E[1-\operatorname{sign}(Y_1^{t,x})]
 =2\PP(Y_1^{t,x}<0)+\PP(Y_1^{t,x}=0).
\end{equation*}
The deterministic drift bound gives the event inclusions
\begin{equation*}
 \{W_1-W_t<-x-A\}\subseteq\{Y_1^{t,x}<0\}
 \subseteq\{Y_1^{t,x}\leq0\}
 \subseteq\{W_1-W_t\leq-x+A\}.
\end{equation*}
Thus, using the standard Gaussian upper-tail function in \eqref{eq:standard-Gaussian-functions},
\begin{equation}
 2\overline\phi\left(\frac{x+A}{\sqrt T}\right)
 \leq1-\mathsf B(t,x)\leq
 2\overline\phi\left(\frac{x-A}{\sqrt T}\right).
 \label{eq:zt-B-Gaussian-bracket}
\end{equation}

\smallskip
\noindent\emph{Step 2: Proof of \eqref{eq:zt-C-Gaussian-tail}.}

For the remainder of the proof, set $B:=\mathsf B(t,x)$. Then $\mathsf c(t,B)=\mathsf C(t,x)$ by \eqref{eq:zt-slope-curvature}. From \eqref{eq:zt-zero-temp-five} and $\mathsf z_B=m+\mathsf J$,
\begin{equation}
 \partial_x\mathsf C(t,x)
 \stackref{=}{\eqref{eq:zt-slope-curvature},\,\eqref{eq:zt-slope-variables}}
 -2\mathsf C(t,x)\widehat{\mathsf z}(t,x)
 \quad\text{and}\quad
 \partial_x\widehat{\mathsf z}(t,x)
 \stackref{=}{\eqref{eq:zt-slope-variables}}
 \mathsf C(t,x)\bigl(m+\mathsf J(t,B)\bigr)
 \stackref{\geq}{\eqref{eq:zt-zero-temp-five}}0.
 \label{eq:zt-Cz-x-relations}
\end{equation}
Therefore $x\mapsto\mathsf C(t,x)$ is decreasing on $(0,\infty)$ and
\begin{equation*}
 \partial_{xx}\log\mathsf C(t,x)=-2\partial_x\widehat{\mathsf z}(t,x)\leq0.
\end{equation*}
Put
\begin{equation*}
 I_t(x):=1-\mathsf B(t,x)=\int_x^\infty\mathsf C(t,y)\dd y.
\end{equation*}
Choose $h>2A$.  Monotonicity gives
\begin{equation}
 \frac{I_t(x)-I_t(x+h)}h\leq\mathsf C(t,x)
 \leq\frac{I_t(x-h)}h.
 \label{eq:zt-C-from-integrated-tail}
\end{equation}
For every $z>0$, the Gaussian Mills bounds are
\begin{equation}
 \frac{z}{1+z^2}\frac{e^{-z^2/2}}{\sqrt{2\pi}}
 \leq\overline\phi(z)
 \leq\frac1z\frac{e^{-z^2/2}}{\sqrt{2\pi}}.
 \label{eq:zt-Gaussian-Mills-bounds}
\end{equation}
Indeed, the upper bound follows by replacing $1$ with $r/z$ in the integral defining $\overline\phi(z)$, while integration by parts gives
\begin{equation*}
 \overline\phi(z)=\frac{e^{-z^2/2}}{z\sqrt{2\pi}}
 -\frac1{\sqrt{2\pi}}\int_z^\infty\frac{e^{-r^2/2}}{r^2}\dd r
 \geq\frac{e^{-z^2/2}}{z\sqrt{2\pi}}-\frac{\overline\phi(z)}{z^2},
\end{equation*}
which proves the lower bound. For all sufficiently large $x$, the upper bound in \eqref{eq:zt-B-Gaussian-bracket} at $x+h$, the lower bound at $x$, and \eqref{eq:zt-Gaussian-Mills-bounds} give
\begin{equation*}
 \frac{I_t(x+h)}{I_t(x)}
 \leq C_0\,
 \frac{\overline\phi((x+h-A)/\sqrt T)}
      {\overline\phi((x+A)/\sqrt T)}
 \stackref{\leq}{\eqref{eq:zt-Gaussian-Mills-bounds}}
 C_0e^{-x(h-2A)/T}.
\end{equation*}
Thus $h-2A>0$ implies $I_t(x+h)\leq I_t(x)/2$ for all large $x$. Combining this fact with \eqref{eq:zt-C-from-integrated-tail} and applying \eqref{eq:zt-Gaussian-Mills-bounds} once more proves \eqref{eq:zt-C-Gaussian-tail}. The constants are uniform for $t\in I$ after taking $h>2\sup_{t\in I}A(t)$.

\smallskip
\noindent\emph{Step 3: Bounds for $\widehat{\mathsf z}$ and its first two spatial derivatives.}

Let $\ell_t(x):=-\log\mathsf C(t,x)$. Then $\ell_t$ is increasing and convex, $\ell_t'(x)=2\widehat{\mathsf z}(t,x)$, and \eqref{eq:zt-C-Gaussian-tail} gives
\begin{equation*}
 \ell_t(x)=\frac{x^2}{2T}+O(x+\log x).
\end{equation*}
Convexity yields
\begin{equation*}
 0\leq\ell_t'(x)\leq\frac{\ell_t(2x)-\ell_t(x)}x=O(x),
\end{equation*}
so $\widehat{\mathsf z}(t,x)=O(x)$. With $y(x):=\partial_x\widehat{\mathsf z}(t,x)$, differentiation of \eqref{eq:zt-Cz-x-relations} gives
\begin{equation*}
 y'(x)=\partial_{xx}\widehat{\mathsf z}(t,x)
 =-2\widehat{\mathsf z}(t,x)y(x)+\mathsf C(t,x)^2\mathsf J_B(t,B).
\end{equation*}
Since $\mathsf C(t,x)\mathsf J_B(t,B)\leq3\widehat{\mathsf z}(t,x)\mathsf J(t,B)$ and $\mathsf C(t,x)\mathsf J(t,B)\leq\mathsf C(t,x)(m+\mathsf J(t,B))=y(x)$,
\begin{equation}
 -2\widehat{\mathsf z}(t,x)y(x)
 \leq y'(x)
 \stackref{\leq}{\eqref{eq:zt-zero-temp-five}}
 \widehat{\mathsf z}(t,x)y(x).
 \label{eq:zt-y-differential-bracket}
\end{equation}
For $s\in[x-x^{-1},x]$, the already proved bound gives $\widehat{\mathsf z}(t,s)\leq C_0x$. If $y(x)=0$, the desired upper bound is immediate. Otherwise, integrating $y'/y\leq\widehat{\mathsf z}(t,\cdot)$ backward using \eqref{eq:zt-y-differential-bracket} gives $y(s)\geq c\,y(x)$ with a constant independent of large $x$. Hence
\begin{equation*}
 \frac{c}{x}y(x)\leq\int_{x-1/x}^xy(s)\dd s
 =\widehat{\mathsf z}(t,x)-\widehat{\mathsf z}(t,x-1/x)\leq\widehat{\mathsf z}(t,x)\leq C_0x.
\end{equation*}
Thus $\partial_x\widehat{\mathsf z}(t,x)=O(x^2)$, and \eqref{eq:zt-y-differential-bracket} then gives $\partial_{xx}\widehat{\mathsf z}(t,x)=O(x^3)$.

\smallskip
\noindent\emph{Step 4: Proof of \eqref{eq:zt-score-tail}.}

Inside the gap,
\begin{equation*}
 Q(t,\cdot)=P_{t-a}Q(a+,\cdot).
\end{equation*}
The incoming object may be viewed as a finite log-concave measure; if $a=0$, it is a multiple of $\delta_0$.  Set $s:=t-a$ and define the posterior probability measure
\begin{equation*}
 \mu_x(\mathrm d y):=\frac{p_s(x-y)Q(a+,\mathrm d y)}{Q(t,x)}.
\end{equation*}
Differentiating the Gaussian convolution gives the heat-score identities
\begin{equation*}
 \mathfrak r(t,x)=\frac{x-\E_{\mu_x}Y}{s}
 \quad\text{and}\quad
 \mathfrak r_x(t,x)=\frac1s-\frac{\Var_{\mu_x}(Y)}{s^2}.
\end{equation*}
The variance term gives $\mathfrak r_x\leq s^{-1}$, and log-concavity gives $\mathfrak r_x=-(\log Q)_{xx}\geq0$.  Evenness gives $\mathfrak r(t,0)=0$, so $0\leq\mathfrak r(t,x)\leq x/(t-a)$.  Since $H=\mathfrak r^2-\mathfrak r_x$, this proves \eqref{eq:zt-score-tail}.

\smallskip
\noindent\emph{Step 5: Proof of \eqref{eq:zt-rho-Gaussian-tail}.}

The control representation gives $u\geq0$, and one-Lipschitzness gives $u(t,x)\leq u(t,0)+|x|$.  The exponential in \eqref{eq:zt-Stieltjes-bridge} is at most one, so $Q(t,x)\leq c_0p_t(x)$.  Because $\rho_t=e^{mu(t,\cdot)}Q(t,\cdot)$ on the gap,
\begin{equation*}
 \rho_t(x)
 \stackref{\leq}{\eqref{eq:zt-Stieltjes-bridge}}
 C_I\exp\left\{-\frac{x^2}{2t}+m|x|\right\},
\end{equation*}
which proves \eqref{eq:zt-rho-Gaussian-tail} uniformly on $I$.

\smallskip
\noindent\emph{Step 6: Weighted integrability, uniform tails, and the boundary limit.}

For $t\in I$ and $0\leq B<1$, set $x:=x(t,B)$ and define
\begin{align*}
 \kappa(t,B)&:=\mathsf K(t,B)B=\mathsf z(t,B)-mB,\\
 N(t,B)&:=\mathfrak r(t,x)+\kappa(t,B)
 \quad\text{and}\quad w(t,B):=2\rho_t(x)\mathsf c(t,B).
\end{align*}
Uniformly as $B\uparrow1$ and $x=x(t,B)\to+\infty$,
\begin{align*}
 \kappa(t,B),N(t,B)&=O(x)\quad\text{and}\quad \Phi(t,B)=O(x^2),\\
 \mathsf c(t,B)\mathsf J(t,B)&=\partial_x\widehat{\mathsf z}(t,x)-m\mathsf c(t,B)=O(x^2),\\
 \mathsf c(t,B)^2\mathsf J_B(t,B)&=\partial_{xx}\widehat{\mathsf z}(t,x)+2\widehat{\mathsf z}(t,x)\partial_x\widehat{\mathsf z}(t,x)=O(x^3),\\
 \cG(t,B)&=6\widehat{\mathsf z}(t,x)^2\mathsf c(t,B)\mathsf J(t,B)-2\widehat{\mathsf z}(t,x)\mathsf c(t,B)^2\mathsf J_B(t,B)=O(x^4).
\end{align*}
In particular, the quantities
\begin{align*}
 R_1(t,B)&:=\kappa(t,B)\mathfrak r(t,x)+\frac12\kappa(t,B)^2,\\
 R_0(t,B)&:=2\mathsf c(t,B)\mathsf J(t,B)-\frac m2\mathsf c(t,B)-\frac12\mathsf z(t,B)^2,\\
 \text{and}\qquad L(t,B)&:=\mathsf c(t,B)\mathsf J(t,B)-\mathsf z(t,B)\bigl(N(t,B)+\mathsf z(t,B)\bigr).
\end{align*}
are $O(x^2)$. The individual $\mathsf J$ and $\mathsf J_B$ need not have polynomial growth; only the displayed composites are asserted to do so. Since $\dd B=\mathsf c(t,B)\dd x$, the Gaussian estimates for both $\rho$ and $\mathsf C$ give, for every $p\geq0$,
\begin{equation}
 \sup_{t\in I}\int_0^1w(t,B)
   \bigl(1+|x(t,B)|^p\bigr)\dd B
 =\sup_{t\in I}2\int_0^\infty\rho_t(x)\mathsf C(t,x)^2(1+x^p)\dd x<\infty.
 \label{eq:zt-weighted-polynomial-integrability}
\end{equation}
The same Gaussian bounds, together with \eqref{eq:zt-B-Gaussian-bracket}, imply the uniform tail form
\begin{equation}
 \lim_{B_0\uparrow1}\sup_{t\in I}
 \int_{B_0}^1w(t,B)\bigl(1+|x(t,B)|^p\bigr)\dd B=0.
 \label{eq:zt-weighted-uniform-tail}
\end{equation}
Indeed, the inverse points $x(t,B_0)$ tend to $+\infty$ uniformly for $t\in I$ as $B_0\uparrow1$, and the right side of \eqref{eq:zt-weighted-polynomial-integrability} then has a uniform Gaussian tail. Moreover,
\begin{align}
 &\lim_{B\uparrow1}\sup_{t\in I}|w(t,B)\mathsf c(t,B)\mathsf z(t,B)P(x(t,B))| \notag\\
 &\qquad=\lim_{B\uparrow1}\sup_{t\in I}|2\rho_t(x(t,B))\mathsf c(t,B)^2\mathsf z(t,B)P(x(t,B))|=0.
 \label{eq:zt-wCz-boundary}
\end{align}
for every fixed polynomial $P$.  Equations \eqref{eq:zt-weighted-polynomial-integrability}--\eqref{eq:zt-wCz-boundary} are the precise uniform-integrability and boundary statements used in Subsection~\ref{sec:zt-crossing}; they prove the final assertion of the lemma.

\subsection{Proof of Lemma~\ref{lem:zt-polynomial-moment-regularity}}\label{a.pf.lem:zt-polynomial-moment-regularity}

Fix $T'$ with $T<T'<1$. By Lemma~\ref{lem:zt-parabolic-stability}, applied to the constant sequences $g_n=g=|\cdot|$ and $\alpha_n=\alpha=\gamma$, for every $k\geq1$ we have
\begin{equation}
 \sup_{\substack{0\leq t\leq T\\x\in\R}}|\partial_x^ku(t,x)|<\infty,
 \label{eq:zt-polynomial-moment-derivative-bound}
\end{equation}
and $(t,x)\mapsto\partial_x^ku(t,x)$ is continuous on $[0,T]\times\R$. Since the drift in \eqref{eq:zt-optimal-SDE} is bounded,
\begin{equation*}
 \E|X_t-X_s|^2\leq2|t-s|+2\|\gamma\|_{L^\infty(0,T)}^2|t-s|^2,
 \qquad \forall s,t\in[0,T].
\end{equation*}
Consequently, \eqref{eq:zt-polynomial-moment-derivative-bound} and dominated convergence show that $M_P$ is continuous.

Suppose that $\gamma\in C^r([0,T])$. Let $B$ be a standard Brownian motion with $B_0=0$ and define
\begin{equation*}
 \mathcal E_t:=\exp\left\{\int_0^t\gamma(s)u_x(s,B_s)\dd B_s-\frac12\int_0^t\gamma(s)^2u_x(s,B_s)^2\dd s\right\}.
\end{equation*}
The boundedness of $\gamma$ and $u_x$ implies Novikov's condition. Moreover, It\^o's formula, the PDE in \eqref{eq:zt-general-PDE}, and Stieltjes integration by parts give
\begin{equation*}
 \log\mathcal E_t=\int_{[0,t]}\bigl(u(t,B_t)-u(s,B_s)\bigr)\mathrm d\gamma(s).
\end{equation*}
Thus Girsanov's theorem identifies the weighted Brownian expectation in \cite[Proposition~1(ii) and Lemma~2, equations~(37)--(38)]{AuffingerChenProperties} with
\begin{equation*}
 M_P(t)=\E\left[P\bigl(\partial_xu(t,B_t),\ldots,\partial_x^Nu(t,B_t)\bigr)\mathcal E_t\right].
\end{equation*}
With $\xi''\equiv1$ and the distribution function in that paper replaced by $\gamma$, the proof of the cited lemma gives
\begin{equation}
 \frac{\dd}{\dd t}M_P(t)=\E\left[(\mathcal L_{\gamma(t)}P)\bigl(Z_1(t),\ldots,Z_{N+1}(t)\bigr)\right],
 \label{eq:zt-polynomial-moment-derivative}
\end{equation}
where
\begin{align*}
 (\mathcal L_gP)(z)&:=-\frac g2\sum_{k=1}^N\left(\sum_{j=1}^{k-1}\binom{k}{j}z_{j+1}z_{k-j+1}\right)\partial_kP(z)+\frac12\sum_{k,\ell=1}^Nz_{k+1}z_{\ell+1}\partial_{k\ell}P(z).
\end{align*}
The cases $k=1,2$ of the corresponding stochastic identities also appear in \cite[Lemma~3, equations~(14)--(15)]{ChenHandschyLerman}. The proof of the formula in \cite[Lemma~2]{AuffingerChenProperties} is local in time and uses only the PDE and bounded spatial derivatives. It therefore applies to $u^{|\cdot|,\gamma}$ on $[0,T]$: the nonsmooth terminal datum $|\cdot|$ is separated from this interval by $1-T'>0$, and \eqref{eq:zt-polynomial-moment-derivative-bound} supplies the required bounds.

The right-hand side of \eqref{eq:zt-polynomial-moment-derivative} is a finite linear combination of polynomial moments of spatial derivatives, with coefficients depending polynomially on $\gamma(t)$. Repeated application of \eqref{eq:zt-polynomial-moment-derivative}, as in \cite[proof of Theorem~2(ii)]{AuffingerChenProperties}, gives $M_P\in C^{r+1}([0,T])$. Its integrated form gives the asserted one-sided derivatives at zero and $T$.

\section{\texorpdfstring{Relations to \cite{AIxivPreprint,Lopatto2026v2,Lopatto2026}}{Relations to the aiXiv preprint and Lopatto's second and third versions}}
\label{app:development}

Lopatto's second arXiv version~\cite{Lopatto2026v2}, submitted on July~15, 2026, proved that for every $\beta>1$ the support of the positive-temperature Parisi measure contains an interval beginning at zero; see \cite[Theorem~1.1]{Lopatto2026v2}. On July~16, 2026, before Lopatto's third arXiv version \cite{Lopatto2026} became publicly available, an earlier version of the present paper was posted on aiXiv as \cite{AIxivPreprint}. Taking Lopatto's second version \cite{Lopatto2026v2} as an input, \cite{AIxivPreprint} proved that the entire positive-temperature support is an interval and also proved that the zero-temperature Stieltjes measure has full support after closure at one. Thus its two support conclusions went beyond the result stated in Lopatto's second version \cite{Lopatto2026v2}.

Lopatto's third version~\cite{Lopatto2026} subsequently established the complete positive-temperature interval-and-atom characterization stated in Theorem~\ref{thm:finite-temperature-structure}. Its transformed-density analysis and arbitrary-gap crossing argument have mathematical overlap with parts of the positive- and zero-temperature proofs in \cite{AIxivPreprint}. The present manuscript was reorganized in response to this development. The overlapping positive-temperature support proof was removed and replaced by the citation in Theorem~\ref{thm:finite-temperature-structure}; the positive-temperature part now contains only the quantitative endpoint results not supplied by that theorem. The present manuscript also strengthens \cite{AIxivPreprint} by proving smooth absolute continuity of the zero-temperature Stieltjes measure and sharpening the positive-temperature endpoint scale to $\beta^{-2}$.

The roles of Lopatto's second and third versions in the zero-temperature proof should be distinguished. The finite Cole--Hopf inequalities proved in Appendix~\ref{app:finite-KJ} already appear in \cite[Proposition 3.6 and Equation (3.39)]{Lopatto2026v2}. They were used as an input in \cite{AIxivPreprint}; the purpose of Appendix~\ref{app:finite-KJ} is to reproduce their proof locally so that the present zero-temperature argument is self-contained. Proposition~\ref{prop:zt-zero-temp-KJ} scales these inequalities to obtain \eqref{eq:zt-zero-temp-five}. This zero-temperature scaling and the uniform tail estimates now proved in Appendix~\ref{a.pf.lem:zt-tails}, including the estimates for $Q$ and $\rho$, were already present with essentially the same arguments in \cite{AIxivPreprint}. In particular, neither of these parts was obtained from Lopatto's third version. The role of the tail estimates is analogous to the endpoint control in \cite[Lemma 3.8 and the proof of Proposition 4.1]{Lopatto2026v2}, but their zero-temperature formulation is the one given in \cite{AIxivPreprint}.

There is a separate similarity with Lopatto's third version. After the change of variables $B=\mathsf B(t,x)$, the identity \eqref{eq:zt-Gamma-second-I} and the differentiation of its right-hand side parallel \cite[Proposition 9.1]{Lopatto2026}. However, this calculation and the resulting arbitrary-gap crossing argument were already contained in \cite{AIxivPreprint} before Lopatto's third version became publicly available. Thus \cite[Proposition 9.1]{Lopatto2026} is not an input to the zero-temperature proof but a subsequently released parallel positive-temperature argument.

In summary, Theorem~\ref{thm:finite-temperature-structure} is the only input taken from Lopatto's third version~\cite{Lopatto2026}. Apart from this theorem and the well-established results cited in the text, the manuscript is self-contained: its new arguments develop \cite{AIxivPreprint} and require from Lopatto only the material from his second version~\cite{Lopatto2026v2} that is reproved in Appendix~\ref{app:finite-KJ}.

\bigskip

\noindent\textbf{Acknowledgements.} HBC gratefully acknowledges funding from the NYU Shanghai Start-Up Fund and support from the NYU--ECNU Institute of Mathematical Sciences at NYU Shanghai. HBC warmly thanks Zijie Zhuang for very helpful discussions and thanks Patrick Lopatto for encouraging the preparation of this manuscript.

\end{document}